\newcommand{\red}{\textcolor[rgb]{1.00,0.00,0.00}}
\newcommand{\blue}{\textcolor[rgb]{0.00,0.00,1.00}}
\newtheorem{theorem}{Theorem}
\newtheorem{example}{Example}
\newtheorem{prop}[theorem]{Proposition}
\newtheorem{lemma}[theorem]{Lemma}
\newtheorem{corol}[theorem]{Corollary}
\theoremstyle{definition}
\newtheorem{defi}[theorem]{Definition}
\theoremstyle{remark}
\newtheorem{remark}[theorem]{Remark}
\newcommand{\p}{\mathbb{P}}
\newcommand{\e}{\mathbb{E}}
\newcommand{\reals}{\mathbb{R}}
\newcommand{\ind}{\mathbf{1}}
\newcommand{\md}{\mathrm{d}}
\newcommand{\drift}{c}
\newcommand{\wq}{w^{(q)}}
\def\beq{\begin{eqnarray}} \def\eeq{\end{eqnarray}}
\def\al*#1{\begin{align*}#1\end{align*}}
\def\ga*#1{\begin{gather*}#1\end{gather*}}
\def\alat*#1#2{\begin{alignat*}{#1}#2\end{alignat*}}
\def\bea{\begin{eqnarray*}}
\def\eea{\end{eqnarray*}}
\def\ml*#1{\begin{multline*}#1\end{multline*}}
\newcommand{\W}{\mathbb{W}}
\newcommand{\C}{\mathcal {C}}
\newcommand{\B}{\mathcal {B}}
\newcommand{\A}{\mathcal {A}}
\title{Fluctuation theory for Level-dependent L\'evy risk processes }
\author{Irmina Czarna}
\address{Faculty of Pure and Applied Mathematics, Wroc\l aw University
of Science and Technology, ul. Wybrze\.ze Wyspia\'nskiego 27, 50-370 Wroc\l aw, Poland.}
\email{irmina.czarna@gmail.com}
\author{Jos\'e-Luis P\'erez}
\address{Department of Probability and Statistics, Centro de Investigaci\'on en Matem\'aticas A.C. Calle Jalisco s/n. C.P. 36240,
Guanajuato, Mexico.}
\email{jluis.garmendia@cimat.mx}
\author{Tomasz Rolski
}
\address{Mathematical Institute, University of Wroc\l aw, pl. Grunwaldzki 2/4, 50-384 Wroc\l aw, Poland.}
\email{tomasz.rolski@gmail.com}
\author{Kazutoshi Yamazaki
}
\address{Department of Mathematics, Faculty of Engineering Science, Kansai University, 3-3-35 Yamatecho,
Suita-shi, Osaka 564-8680, Japan.}
\email{kyamazak@kansai-u.ac.jp}
\thanks{I. Czarna is partially supported by the National Science Centre Grant No. 2015/19/D/ST1/01182. T. Rolski is partially supported by the National Science Centre Grant No. 2015/17/B/ST1/01102. J. L. P\'erez  is  supported  by  CONACYT,  
project no.\ 241195. K. Yamazaki is supported by MEXT KAKENHI grant no.\ 17K05377.
 }
\date{\today}
\subjclass[2000]{60J99, 91B30, 60G40} %
\keywords{}
\begin{document}

\begin{abstract}
A level-dependent L\'evy process solves the stochastic differential equation $dU(t)=dX(t)-\phi(U(t))\,dt$, 
where $X$ is a spectrally negative L\'evy process. A special case is a multi-refracted L\'evy process with 
$\phi_k(x)=\sum_{j=1}^k\delta_j\ind_{\{x\ge b_j\}}$. A general rate function $\phi$ that is non-decreasing and 
locally Lipschitz is also considered. We discuss solutions of the above stochastic differential equation and investigate 
the so-called scale functions, which are counterparts of the scale functions from the theory of L\'evy processes. 
We show how fluctuation identities for $U$ can be expressed via these scale functions. We demonstrate that the derivatives of the scale 
functions are solutions of Volterra integral equations.

\vspace{3mm}

\noindent {\sc Keywords.} {Refracted} L\'evy process, multi-refracted L\'evy process, level-dependent L\'evy process, L\'evy process, Volterra equation, fluctuation theory.

\end{abstract}

\maketitle

\newpage
\vspace{5cm}


\newpage

\section{Introduction}\label{sec:intro}
In this paper, we consider a \textit{level-dependent L\'evy process} $U(t)$, which solves
the following stochastic differential equation (SDE):
\begin{equation}\label{sde1}
dU(t) =dX(t)-\phi(U(t))\,dt, 
\end{equation}
where $X(t)$ is a spectrally negative 
L\'evy process.
In Chapter VII of the book of Asmussen and Albrecher \cite{asmussen_albrecher_2010}, the following alternative form of \eqref{sde1} is analysed:
 \begin{equation}\label{sde2}
dU(t) =-dS(t)+p(U(t))\,dt, 
\end{equation}
where $S(t)$ is a compound Poisson process with non-negative summands,  and $p(x)>0$ for all $x\in\mathbb{R}$. The function $p(x)$ is a level-dependent premium rate.
Notice that in \eqref{sde1}, if $X(t)$ has paths of bounded variation, then we can write  $X(t)=-A(t)+ct$, where $A(t)$ is a pure jump subordinator, and \eqref{sde1} can be rewritten as
\begin{eqnarray*}
dU(t) &=&-dA(t)-(\phi(U(t))-c)\,dt\\
&=&-dA(t)+p(U(t))\,dt,
\end{eqnarray*}

by setting  $p(x)=c-\phi(x)$.
Such a level-dependent risk process is dual to a storage process $V$ with a general release rate, which solves
\begin{equation}\label{sde.storage}
  dV(t)=-dX(t)-p(V(t))\,dt,\end{equation}
with the additional condition that $p(0)=0$ (see, e.g., Chapter XIV of Asmussen \cite{asmussen2003}).

The main contribution of this paper is to develop a theory of scale functions for level-dependent L\'evy risk processes.
If $X$ is a spectrally negative L\'evy process with L\'evy exponent $\psi(\lambda)$, then for all $q\ge0$ 
 the unique solution $W^{(q)}$ of the equation $(\psi(\lambda)-q)\mathcal{L}[W^{(q)}](\lambda)=1$ is said to be a 
 scale function, where by $\mathcal{L}[f](\lambda)=\int_0^\infty e^{-\lambda x} f(x)\,dx$ we denote
 the Laplace transform of the function $f$. Moreover, $W^{(q)}(x)=0$ for all $x<0$.
Required notations, definitions, and facts regarding L\'evy processes are recalled in Section~\ref{ss:basic}.

It turns out that some exit probabilities, and moreover the solutions
of interest in risk theory for  L\'evy processes, can be expressed by scale functions. The name {\it scale function} comes from the formula $W^{(0)}(x-d)/W^{(0)}(a-d)$ 
for the probability of exiting the interval $(d,a)$ via the point $a$ for the process $X$ such that $X(0)=x$. 
A useful survey paper regarding the theory of scale functions of L\'evy processes and their applications in risk theory was provided by 
Kuznetsov et al.\ \cite{kuznetsovetal2012}. We also refer the reader to the books of Kyprianou \cites{kyprianou2006, kyprianou2014}. 

Kyprianou and Loeffen \cite{kyprianouloeffen2010} developed a parallel theory of processes fulfilling equation \eqref{sde1} with 
$\phi(x)=\delta\ind_{\{x>b\}}$, and called the solution $U$ a {\it refracted L\'evy process}.
In their theory, it is essential that $\delta>0$. In this paper, we extend the theory first to solutions $U$ of \eqref{sde1} with some non-decreasing 
function 
\begin{equation}\label{rate_function-mult}
\phi_k(x)=\sum_{j=1}^k\delta_j\ind_{\{x\ge b_j\}},
\end{equation}
called a \textit{multi-refracted L\'evy process}, and then to the case of a general non-decreasing continuously differentiable  $\phi$. 
We prove the existence and  uniqueness of a solution to \eqref{sde1} for rate functions $\phi$ fulfilling the conditions mentioned above. 
Notice that without these assumptions the uniqueness problem for SDE \eqref{sde1} does not have an obvious solution.
Most known results in the literature require either some Lipschitzian properties for $\phi$ or a Brownian component of $X$. See, for example, \cite{Situ}.

The paper consists of two parts. In the first part, we develop some fluctuation formulas for multi-refracted L\'evy processes.
This theory represents a direct, but non-trivial, extension of the paper of Kyprianou and Loeffen \cite{kyprianouloeffen2010 }. We derive formulas for {the} two-sided exit problem, 
one-sided exit problem, resolvents, and ruin probability. We express the solutions to these problems using the scale functions  $w_k^{(q)}$ and $z_k^{(q)}$. 
We also demonstrate that the derivatives of  $w_k^{(q)}$ and $z_k^{(q)}$ fulfill Volterra integral equations of the second kind.
In the second part, we analyze processes with general $\phi$ (or $p$). Here, we assume that $\phi$ is non-decreasing and locally Lipschitz continuous. 
Formulas are developed by approximating $\phi$ by  an approximating sequence $\phi_n$, where $\phi_n$ are the rate functions
of some multi-refracted processes. In the limit, we obtain a level-dependent L\'evy risk process $U$
and the scale functions $w^{(q)}$ and $z^{(q)}$. In this case, the uniqueness of the solution to \eqref{sde1} is clear. 
The derivatives of these scale functions are the solutions of Volterra integral equations of the second kind, as in the multi-refracted case.
As corollaries, we derive ruin probabilities.

The theory of level-dependent L\'evy risk processes or storage processes has a long history. It was mainly developed for compound Poisson processes 
with the ruin function being the main interest
(or the stationary distribution in the context of storage processes). There has been little work regarding scale functions for such  processes. 
Of particular interest is the paper by Brockwell et al.\ \cite{brockwelletal82}, where in the setting of storage processes, the process $X$ is a bounded variation L\'evy process.
The existence of a solution to \eqref{sde.storage} was studied there, and stationary distributions of $V$ were characterised as solutions of Volterra equations.
 For further references and historical comments, we refer the reader to Asmussen   \cite{asmussen2003} and Asmussen and Albrecher \cite{asmussen_albrecher_2010}.

 \subsection{Basic concepts and notations for L\'evy processes}\label{ss:basic}
 Here we present basic concepts and notations from the theory of L\'evy processes (which can be found in the books of
 Kyprianou  \cite{kyprianou2006}, \cite{kyprianou2014}). 
 In this paper,  $X=\{X(t),t\geq 0\}$ is a
 spectrally negative L\'evy process on the filtered probability space $(\Omega,\mathcal{F},\{\mathcal{F}_t , t\geq0\}, \mathbb{P})$.
To avoid trivialities, we exclude the case where $X$ has monotone paths.
As the L\'{e}vy process $X$ has no positive jumps, its moment generating function exists for all $\lambda \geq 0$:
$$
\e \left[ \mathrm{e}^{\lambda X(t)} \right] = \mathrm{e}^{t \psi(\lambda)} ,
$$
where
$$
\psi(\lambda) := \gamma \lambda + \frac{1}{2} \sigma^2 \lambda^2 + \int^{\infty}_0 \left( \mathrm{e}^{-\lambda z} - 1 + \lambda z {\ind_{(0,1)}(z)}  \right) \Pi(\mathrm{d}z) ,
$$
for $\gamma \in \reals$ and $\sigma \geq 0$, and where $\Pi$ is a $\sigma$-finite measure on $(0,\infty)$ such that
$$
\int^{\infty}_0 (1 \wedge z^2) \Pi(\mathrm{d}z) < \infty .
$$
The measure $\Pi$ is called the L\'{e}vy measure of $X$. Finally, note that $\e \left[ X(1) \right] = \psi'(0+)$. We will use the standard Markovian notation: 
the law of $X$ when starting from $X_0 = x$ is denoted by $\p_x$, and the corresponding expectation by $\e_x$. We simply write $\p$ and $\e$ when $x=0$.

When the process $X$  has paths of bounded variation, that is, when $\int^{1}_0 z \Pi(\mathrm{d}z)<\infty$ and $\sigma=0$, we can write
$$
X(t) = \drift t - A(t) ,
$$
where $\drift := \gamma+\int^{1}_0 z \Pi(\mathrm{d}z) $ is the drift of $X$ and $A=\{{ A(t)},t\geq 0\}$ is a pure jump subordinator.
\section{Multi-refracted L\'evy processes}\label{sec:mult.refr}

For $k \geq 1$, $0<\delta_1,\dots,\delta_k$, and $-\infty<b_1<...<b_k<\infty$, we consider the function
$$\phi(x)=\phi_k(x)=\sum_{i=1}^k\delta_i\ind_{\{x > b_i\}}.$$
The corresponding SDE \eqref{sde1} is given by
\begin{equation}\label{procU_n}
					dU_{k}(t)=dX(t)-\sum_{i=1}^{k}\delta_i \ind_{\{U_{k}(t)>b_i\}}\,dt.
		\end{equation}
		
		In this section, we show that \eqref{procU_n} admits a unique solution  in the strong sense for the case that $X$ 
		is a spectrally negative L\'evy process (not the negative of a subordinator). Here, when $X$ has paths of bounded variation we assume that
		\begin{equation}\label{E:delta}
 0<\delta_1+...+\delta_k < \drift := \gamma + \int_{(0,1)} z \Pi(\mathrm{d}z).
\end{equation}
Note that the special case with $k=1$ was already studied in Kyprianou and Loeffen \cite{kyprianouloeffen2010}.
Furthermore, we study the dynamics of multi-refracted L\'evy processes by establishing a suite of identities, 
written in terms of scale functions, related to the one- and two-sided exit problems and resolvents. 
We also present the formula for the ruin probability. Finally, we show that the scale functions for the multi-refracted processes 
satisfy a Volterra-type integral equation, which we use in Section \ref{general premium} to define the
scale functions for general level-dependent processes. 

Observe that from the SDE \eqref{procU_n}, for any $0 \leq j \leq k$ in each level
interval $(b_{j},b_{j+1}]$ (where $b_0:=-\infty$ and $b_{k+1}:=\infty$) the process $U_k$ evolves as
$X_{j}:=\{X(t)-\sum_{i=1}^{j} \delta_i t: t\geq 0  \}$, which is a spectrally negative L\'evy process that
is not the negative of a subordinator, because we assume \eqref{E:delta}.  
The Laplace exponent of $X_j$ on $[0, \infty)$ is given by
$$
\lambda \mapsto \psi_j(\lambda) := \psi(\lambda) - (\delta_1+...+\delta_j) \lambda,
$$
with right-inverse $$\varphi_{j}(q) = \sup \{ \lambda \geq 0 \mid \psi_j(\lambda) = q\}.$$ 
We will use equivalently the notation $U_0=X_0:=X$ and $\varphi_{0}(q):=\Phi(q)$.
Moreover, for all $0 \leq j \leq k$, $X_j$ has the same L\'evy measure $\Pi$ and diffusion coefficient $\sigma$ as $X$. 
Furthermore, it is easy to notice the recursive relationship between the processes $X_j$ and $X_{j+1}$, i.e.,
$X_{j+1}=\{X_j(t)-\delta_{j+1} t : t\geq 0 \}$.
  

  
\begin{theorem}[Existence]\label{lem:mr.exist}
For $k \geq 1$, $0<\delta_1,\dots,\delta_k$, and $-\infty<b_1<...<b_k < \infty$, there exists a strong 
solution $U_k$ to the SDE \eqref{procU_n}. 
		\end{theorem}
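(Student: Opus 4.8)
The plan is to construct the solution $U_k$ iteratively, level by level, by pasting together copies of the spectrally negative L\'evy processes $X_j$ using their respective first-passage times, and to check that the construction is consistent and produces a genuine strong solution. The key observation, already noted in the text, is that inside each interval $(b_j, b_{j+1}]$ the SDE \eqref{procU_n} reduces to $\md U_k(t) = \md X_j(t)$, so the only real issue is how the process behaves at the boundaries $b_1, \dots, b_k$.

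First I would set up the induction on $k$. For $k=1$ the result is exactly the existence part of Kyprianou and Loeffen \cite{kyprianouloeffen2010}, which we may invoke. For the inductive step, suppose a strong solution exists for the rate function $\phi_{k-1}$ built from $b_1 < \dots < b_{k-1}$ and drifts $\delta_1, \dots, \delta_{k-1}$, started from any point and driven by any spectrally negative L\'evy process (not the negative of a subordinator) satisfying the analogue of \eqref{E:delta}. Starting from $U_k(0)=x$, define a sequence of stopping times: run the process as the solution of the $\phi_{k-1}$-SDE (equivalently, below level $b_k$ it behaves like the $(k-1)$-refracted process) until the first time $T_1$ it hits $b_k$ from below; then above $b_k$ the dynamics are those of the process $X_k = X_{k-1} - \delta_k t$, which we run until the first time $T_2$ it returns to $b_k$; then repeat. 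Because $X_k$ has no positive jumps, the downcrossings of $b_k$ are continuous, and because $X_k$ is not the negative of a subordinator (thanks to \eqref{E:delta}), each excursion above and below $b_k$ has a.s.\ positive length, so the stopping times $T_1 < T_2 < \cdots$ strictly increase; I would argue they diverge to $\infty$ a.s., so the pasted process is defined for all $t \ge 0$. Then I would verify that the resulting process solves \eqref{procU_n}: on each excursion interval the equation holds by construction, and at the boundary $b_k$ the added drift term $\delta_k \ind_{\{U_k(t) > b_k\}}$ integrates correctly since the set of times spent exactly at $b_k$ has zero Lebesgue measure (regularity of $b_k$ for both $X_{k-1}$ and $X_k$). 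Strongness follows because at every stage the construction uses only the driving path of $X$ and deterministic operations (first-passage times of adapted processes), so $U_k$ is adapted to the completed natural filtration of $X$.

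The main obstacle I expect is controlling the sequence of switching times $T_n$ — specifically, ruling out an accumulation point $T_\infty < \infty$ where the process would "chatter" infinitely often across $b_k$. In the bounded-variation case this is genuinely delicate when $X_{k-1}$ creeps downward through $b_k$ with small drift; the condition \eqref{E:delta} is exactly what guarantees $X_k$ still has positive drift (hence upward regularity at $b_k$), and one needs a quantitative lower bound on the expected excursion lengths, or a comparison/coupling argument, to sum them to $+\infty$. A clean way around this is to truncate: solve the SDE on $[0, b_1 \wedge \tau_N]$ type intervals, or to use a fixed-point / Picard argument on compact time intervals combined with the Markov property at the $T_n$'s; alternatively one can cite a general result on SDEs with piecewise-constant, monotone drift. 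I would therefore organize the argument so that the only place where \eqref{E:delta} is used is precisely in showing $T_n \uparrow \infty$, and handle the unbounded-variation case separately, where the presence of a Gaussian component or infinite-variation jumps makes $b_k$ regular for both half-lines automatically and the non-accumulation of $T_n$ is immediate.
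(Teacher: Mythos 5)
Your proposal follows essentially the same route as the paper: induction on $k$ with base case from Kyprianou--Loeffen, a pathwise pasting construction driven by alternating up/down-crossing times of the level $b_k$, use of \eqref{E:delta} to guarantee that $X_k$ has strictly positive drift (hence $b_k$ is irregular for $(-\infty,b_k)$ and the switching times are strictly separated), and a separate treatment of the unbounded-variation case by an approximation from bounded-variation processes. The only cosmetic difference is that the paper lets the $(k-1)$-refracted process $\overline{U}^{n-1}_{k-1}$ run through the excursion above $b_k$ and subtracts the linear drift $\delta_k(t-T_n)$, rather than literally switching to a fresh $X_k$ path, which lets the inductive hypothesis carry all of the existence burden and makes the verification in \eqref{pathwise_constr} a direct computation; your accumulation concern about $T_n$ is legitimate and is addressed (albeit tersely) by exactly the irregularity observation you identify.
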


	\begin{proof}


		 As in Section 3 of \cite{kyprianouloeffen2010}, we provide a pathwise solution 
		to \eqref{procU_n} when the driving L\'evy process $X$ is of bounded variation. We defer the proof for the unbounded variation case to Appendix \ref{UBVcase}. 
		We prove the result by induction. First, the base case ($k=1$) holds by \cite{kyprianouloeffen2010}. 
		Next, we assume that there exists a strong solution $U_{k-1}$ to the SDE
		\begin{equation}\label{sde_bv}
			dU_{k-1}(t)=dX(t)-\sum_{i=1}^{k-1}\delta_i \ind_{\{U_{k-1}(t)>b_i\}}\,dt
		\end{equation}
		with the initial condition $U_{k-1}(0)=x$ for any arbitrary $x$, in order to show that we can construct a process $U_ k$ that solves the SDE \eqref{procU_n}. 
		

		 To this end, we define a sequence of times $(S_n)_{n \geq 0}$ and processes $(\overline{U}_{k-1}^n(t), t \geq S_n)_{n \geq 0}$
		 recursively as follows.  First, we set $\overline{U}_{k-1}^0(t) := U_{k-1}(t)$, $t \geq S_0 := 0$, 
		which exists and solves \eqref{sde_bv} by the inductive hypothesis. For $n \geq 1$, we recursively set 
				\begin{align*}
			T_n& :=\inf\left\{t>S_{n-1}: \overline{U}_{k-1}^{n-1}(t) \geq b_k \right\},\\
			S_n& :=\inf\left\{t>T_{n}:\overline{U}_{k-1}^{n-1}(t)-\delta_k(t-T_n) < b_k\right\},
		\end{align*}
		and  $\{ \overline{U}_{k-1}^n, t \geq S_n\}$, starting at $\overline{U}_{k-1}^{n}(S_n)=\overline{U}_{k-1}^{n-1}(S_n)-\delta_k(S_n-T_n)$ and solving
		\begin{equation}\label{U_1_k_1_k}
		d\overline{U}_{k-1}^n(t) = dX(t)-\sum_{i=1}^{k-1}\delta_i \ind_{\{\overline{U}_{k-1}^n(t)>b_i\}}\,dt,  
		\end{equation}
		which again exists by the inductive hypothesis. 

		Here, one can observe that the difference between any two consecutive times $S_n$ and $T_n$ is strictly positive. 
	This is because of the fact that on $[T_n, S_n]$, $d \overline{U}^n_{k-1} = d X_{k-1} -\delta_k$, and $X_{k-1}$ is of 
	bounded variation with drift $c - \sum_{j=1}^{k-1} \delta_j > 0$. Thus by Theorem 6.5 in \cite{kyprianou2014} $b_k$ is irregular for $(-\infty, b_k)$, and hence $T_n <  S_n$.
On the other hand, $\overline{U}^n_{k-1}$ always jumps at $S_n$, while it is continuous (creeps upwards) at $T_n$, and so $T_n < S_n < T_{n+1}$.

Now, proceeding as in \cite{kyprianouloeffen2010}, we construct a solution $\{U_k(t):t\geq0\}$ to \eqref{procU_n} as follows:
		\begin{align}\label{pathwise_constr}
			U_k(t)= \begin{cases} \overline{U}_{k-1}^n(t)  \quad \text{for $t\in[S_n,T_{n+1})$ and $n=0,1,2,\dots,$} \\ 
				\overline{U}_{k-1}^{n-1}(t)-\delta_k(t-T_n) \quad \text{for $t\in[T_n,S_n)$ and $n=1,2,\dots$}  \end{cases}
		\end{align}
		Our final step is to prove that the above pathwise-constructed process $U_k$ is a strong solution to the equation \eqref{procU_n}. 
		
		First, for  $t\in[S_0,T_1)$ we note that $U_k(t) = \overline{U}_{k-1}^0(t) =U_{k-1}(t)$ and $\ind_{\{U_{k}(t)>b_k\}}=0$, 
and therefore
		\begin{align*}
			U_k(t)=X(t)-\sum_{i=1}^{k-1}\delta_i\int_0^t \ind_{\{U_{k-1}(s)>b_i\}}\,ds
			=X(t)-\sum_{i=1}^{k}\delta_i\int_0^t \ind_{\{U_{k}(s)>b_i\}}ds,
		\end{align*}
	which solves \eqref{procU_n}.
		 Now, let $t\in[T_1,S_1)$. Then, $U_k(t)=U_{k-1}(t)-\delta_k(t-T_1)$, and hence
		\begin{align*}
		U_k(t) &= U_{k-1}(t) - \delta_k (t-T_1) = X(t)-\sum_{i=1}^{k-1}\delta_i \int_0^t \ind_{\{U_{k-1}(t)>b_i\}}\,dt - 
		\delta_k (t-T_1) \\&= X(t)-\sum_{i=1}^{k-1}\delta_i \int_0^{T_1} \ind_{\{U_{k-1}(t)>b_i\}}\,dt  -
		\sum_{i=1}^{k-1}\delta_i \int_{T_1}^t \ind_{\{U_{k-1}(t)>b_i\}}\,dt - \delta_k (t-T_1) \\
		&= X(t)-\sum_{i=1}^{k}\delta_i \int_0^{T_1} \ind_{\{U_{k}(t)>b_i\}}\,dt  -\sum_{i=1}^{k}\delta_i \int_{T_1}^t \ind_{\{U_{k-1}(t)>b_i\}}\,dt \\
				&= X(t)-\sum_{i=1}^{k}\delta_i \int_0^{t} \ind_{\{U_k(t)>b_i\}}\,dt,		\end{align*}
		where the second to the last equality holds because $U_k =U_{k-1} \leq b_k$ on $[0, T_1)$, and the last equality holds because 
		$U_{k-1} \geq U_k \geq b_k \geq b_i$  for $i \leq k$ on $[T_1,S_1)$. 		
		In particular, we have that
				\begin{align}
		U_k(S_1) &=U_{k-1}(S_1)-\delta_k(S_1-T_1)= X(S_1)-\sum_{i=1}^{k}\delta_i \int_0^{S_1} \ind_{\{U_k(t)>b_i\}}\,dt.		\label{Y_at_S_1}\end{align}
		
		From \eqref{U_1_k_1_k} and \eqref{Y_at_S_1}, it holds that
		\begin{align}
		\begin{split}
\overline{U}_{k-1}^1(t)  &= U_k(S_1) + (X(t) - X(S_1))-\sum_{i=1}^{k-1}\delta_i \int_{S_1}^t \ind_{\{\overline{U}_{k-1}^1(t)> b_i\}}\,dt \\
		&=  X(S_1)-\sum_{i=1}^{k}\delta_i \int_0^{S_1} \ind_{\{U_k(t)>b_i\}}\,dt + (X(t) - X(S_1))-\sum_{i=1}^{k-1}\delta_i \int_{S_1}^t 
		\ind_{\{\overline{U}_{k-1}^1(t)> b_i\}}\,dt \\
		&=  X(t)-\sum_{i=1}^{k}\delta_i \int_0^{S_1} \ind_{\{U_k(t)>b_i\}}\,dt -\sum_{i=1}^{k-1}\delta_i \int_{S_1}^t \ind_{\{\overline{U}_{k-1}^1(t)> b_i\}}
		\,dt.
		\end{split}	\end{align}
		Then, for  $t \in [S_1, T_2)$, by \eqref{U_1_k_1_k}, \eqref{pathwise_constr}, and the fact that $\overline{U}^1_{k-1} = U_k \leq b_k$ on $[S_1,T_2)$, 
		we have
		\begin{align*}
U_k(t)=\overline{U}_{k-1}^1(t) 
		&=  X(t)-\sum_{i=1}^{k}\delta_i \int_0^{S_1} \ind_{\{U_k(t)>b_i\}}\,dt -\sum_{i=1}^{k-1}\delta_i \int_{S_1}^t \ind_{\{\overline{U}_{k-1}^1(t)> b_i\}}\,dt \\
		&=  X(t)-\sum_{i=1}^{k}\delta_i \int_0^{t} \ind_{\{U_k(t)>b_i\}}\,dt.
			\end{align*}
%
				
				On $[T_2,S_2)$, it follows from \eqref{U_1_k_1_k} and \eqref{pathwise_constr} that
				
						\begin{align*}
						U_k(t) &= \overline{U}_{k-1}^1(t) -\delta_k(t-T_2)\\
		&=  X(t)-\sum_{i=1}^{k}\delta_i \int_0^{S_1} \ind_{\{U_k(t)>b_i\}}\,dt -\sum_{i=1}^{k-1}\delta_i \int_{S_1}^{T_2} 
		\ind_{\{\overline{U}_{k-1}^1(t)> b_i\}}\,dt \\&-\sum_{i=1}^{k-1}\delta_i \int_{T_2}^t \ind_{\{\overline{U}_{k-1}^1(t)> b_i\}}\,dt  -\delta_k(t-T_2)\\
		&=X(t)-\sum_{i=1}^{k}\delta_i \int_0^{t} \ind_{\{U_k(t)>b_i\}}\,dt.
			\end{align*}
		
		The previous identity follows from the fact that $U^1_{k-1} = U_k \leq b_k$ on $[S_1,T_2)$ and $\overline{U}^1_{k-1} \geq U_k \geq b_k \geq b_i$ for all $1 \leq i \leq k$ on $[T_2, S_2).$
				
		Hence, by proceeding by induction on the time intervals $[S_n, T_{n+1})$ and $[T_{n+1}, S_{n+1})$ for $n=2,3,...$ we obtain that, for any $t>0$,
		the process defined in \eqref{pathwise_constr} fulfills \eqref{procU_n}. This completes the proof of the existence of a strong solution for the bounded variation case.
		\end{proof}

		The proof of the uniqueness follows verbatim from Proposition 15 of \cite{kyprianouloeffen2010} (see also Example 2.4 on
p. 286 of \cite{karatzas91}), using the fact that the function $\phi_k$ is non-decreasing for $k=1,2,\dots$. 
 Hence, we have the following result.

		\begin{lemma} \textbf{(Uniqueness)} \label{lemma_uniqueness}
Under the assumptions of Theorem \ref{lem:mr.exist}, there exists a unique strong solution to (\ref{procU_n}).
\end{lemma}

Using the argument given in Remark 3 of \cite{kyprianouloeffen2010}, we can obtain the following.

			\begin{lemma} \textbf{(Strong Markov property)} 
			Under the assumptions of Theorem \ref{lem:mr.exist}, for each $k\geq1$, the process $U_k$, which is the unique solution to \eqref{procU_n}, is a strong Markov process.
			\end{lemma}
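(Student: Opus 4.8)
The plan is to follow the argument of Remark~3 in \cite{kyprianouloeffen2010}, combining the pathwise uniqueness of Lemma~\ref{lemma_uniqueness} with the (classical) strong Markov property of the spectrally negative L\'evy process $X$. Write $\Phi$ for the deterministic solution map that sends a starting point $x\in\reals$ and a spectrally negative L\'evy path $\omega$ to the associated strong solution of \eqref{procU_n}; its existence is provided by Theorem~\ref{lem:mr.exist} together with the Appendix, and its measurability in the pair $(x,\omega)$ can be read off the explicit pathwise recursion used there in the bounded variation case, and off the approximation scheme of the Appendix in the unbounded variation case. With this notation $U_k=\Phi(x,X)$ whenever $U_k(0)=x$.

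First I would record the restart identity. Let $T$ be an $\{\mathcal F_t\}$-stopping time and, on the event $\{T<\infty\}$, put $\widetilde X(s):=X(T+s)-X(T)$ for $s\ge 0$. Rearranging the integral form of \eqref{procU_n} over the interval $[T,T+s]$ gives
\begin{equation*}
U_k(T+s)=U_k(T)+\widetilde X(s)-\sum_{i=1}^{k}\delta_i\int_0^s\ind_{\{U_k(T+u)>b_i\}}\,\md u,\qquad s\ge 0,
\end{equation*}
which is precisely the SDE \eqref{procU_n} driven by $\widetilde X$ and started from $U_k(T)$. By the pathwise uniqueness of Lemma~\ref{lemma_uniqueness}, this forces $\{U_k(T+s):s\ge 0\}=\Phi\bigl(U_k(T),\widetilde X\bigr)$.

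It then remains to plug in the behaviour of the driver after a stopping time: by the strong Markov property of $X$, the shifted increment process $\widetilde X$ is independent of $\mathcal F_T$ and has the same law as $X$. Since $U_k(T)$ is $\mathcal F_T$-measurable and $\Phi$ is deterministic, a routine conditioning argument then gives, for every bounded measurable $f$ and every $s\ge 0$,
\begin{equation*}
\e\bigl[f(U_k(T+s))\mid\mathcal F_T\bigr]=\e_y\bigl[f(U_k(s))\bigr]\big|_{y=U_k(T)},
\end{equation*}
and, more generally, that the conditional law of $\{U_k(T+s):s\ge 0\}$ given $\mathcal F_T$ equals the law of $U_k$ started from $U_k(T)$; taking $T\equiv t$ deterministic recovers the ordinary Markov property along the way. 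The step I expect to demand the most care is the joint measurability of $\Phi$ in its two arguments, needed so that $\Phi(U_k(T),\widetilde X)$ is a genuine measurable object: this is handled exactly as the analogous measurability point in \cite{kyprianouloeffen2010}, either by exhibiting $\Phi$ as the pointwise limit of the explicit iterates appearing in the proof of Theorem~\ref{lem:mr.exist}, or, alternatively, by first establishing the ordinary Markov property and then approximating $T$ from above by the dyadic stopping times $T_n:=2^{-n}\lceil 2^n T\rceil$ and passing to the limit using the right-continuity of the paths of $U_k$ and of the filtration.
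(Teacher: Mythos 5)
Your sketch is correct and is precisely the argument the paper invokes: the paper gives no proof of its own and simply refers to Remark~3 of Kyprianou--Loeffen, which is exactly the restart-plus-pathwise-uniqueness-plus-strong-Markov-of-$X$ scheme you lay out, including the measurability caveat about the solution map. Nothing to add.
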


\subsection{Scale functions}\label{sec:scale}
In this section, we present a few facts concerning scale functions that are important for writing many fluctuation identities for 
spectrally negative L\'evy processes.

First, for any $k \geq 0$ the scale functions $W_k^{(q)}$ and $Z_k^{(q)}$ of $X_k$ are defined as follows.
For $q \geq 0$, the $q$-scale function $W_k^{(q)}$ of the process $X_k$ is defined as the continuous function on $[0,\infty)$ 
whose Laplace transform satisfies
\begin{align} \label{laplace_W_k}
\int_0^{\infty} \mathrm{e}^{- \lambda y} W_{k}^{(q)} (y) \mathrm{d}y = \frac{1}{\psi_k(\lambda)- q} ,
 \quad \text{for $\lambda > \varphi_{k}(q).$}
\end{align}
The scale function $W_k^{(q)}$ is positive, strictly increasing and continuous for $x\geq0$. 
We extend $W_k^{(q)}$ to the whole real line by setting $W_k^{(q)}(x)=0$ for $x<0$. In particular, we write $W_k = W_k^{(0)}$ when $q=0$. 
We also define
\begin{equation}\label{eq:zqscale}
Z_k^{(q)}(x) = 1 + q \int_0^x W_k^{(q)}(y)\mathrm d y, \quad x \in \mathbb R.
\end{equation}
Note that $Z_k = Z_k^{(0)}=1$ when $q=0$. 
In particular, for $k=0$, we set $W^{(q)}=W_0^{(q)}$ and $Z^{(q)}=Z_0^{(q)}$ for the scale functions of the spectrally negative L\'evy process $X$.

For any $k \geq 0$, the initial value of $W_k^{(q)}$ is given by
\begin{equation*}\label{initialvalues}
\begin{split}
W_k^{(q)}(0) &=
\begin{cases}
\frac{1}{c-\sum_{j=1}^k \delta_j,} & \text{when $\sigma=0$ and $\int_{0}^1 z \Pi(\mathrm{d}z) < \infty$,} \\
0, & \text{otherwise.}
\end{cases}
\end{split}
\end{equation*}

In \cite{kyprianouloeffen2010}, many fluctuation identities, including the probability of ruin for $U_1$, have been derived
using the scale functions for $U_1$. For $q \geq 0$, $x,d\in \reals$ and $b_1>d$, define
\begin{equation}\label{small w}
w_1^{\left(q\right)}(x;d) := W^{\left(q\right)}(x-d) + \delta_1\int^{x}_{b_1}W_1^{\left(q\right)}(x-y)W^{\left(q\right)'}(y-d)\md y .
\end{equation} 
In the remainder of this paper, we will use the convention that $\int_a^b=0$ if $b<a$. 
Hence, note that when $x \leq b_1$ we have that
\begin{equation}\label{w_1}
w_1^{\left(q\right)}(x;d)=W^{\left(q\right)}(x-d) .
\end{equation}

The following definition will be useful for a compact presentation of the main results of Section \ref{sec:main}. 

\begin{defi}\label{Delta}
For any $0 \leq k$, $-\infty =: b_0 < b_1 < \cdots < b_k<b_{k+1} :=\infty$, and $y\in\mathbb{R}$, define
\begin{align*}
\Xi_{\phi_k}(y) := 1- W^{(q)}(0)\phi_k(y).
\end{align*}

For the unbounded variation case, we note that the fact that $W^{(q)}(0)=0$ implies that $\Xi_{\phi_k}(y)=1$ for all $y\in\mathbb{R}$. 
On the other hand, in the bounded variation case with $y\in(b_i,b_{i+1}]$ and $i\leq k-1$, we have that
	\[
	\Xi_{\phi_k}(y)=1- W^{(q)}(0)\sum_{j=1}^i\delta_j=\prod_{j=1}^{i} \left(1-\delta_j W_{j-1}^{(q)}(0)\right),
	\]
and similarly for $y>b_k$,	we obtain $\Xi_{\phi_k}(y)=\prod_{j=1}^{k} \left(1-\delta_j W_{j-1}^{(q)}(0)\right)$.
	\end{defi}


\subsection{Exit problems and resolvents}\label{sec:main}

For $ a \in \mathbb{R}$ and $k\geq 1$, define the following first-passage stopping times:
\begin{align*}
\tau_{k}^{a,-} & := \inf\{t>0 \colon X_k(t)<a\} \quad \textrm{and} \quad \tau_{k}^{a,+} := \inf\{t>0 \colon X_k(t)\geq a\},\\
\kappa_k^{a,-} & := \inf\{t>0 \colon U_k(t)<a\} \quad \textrm{and} \quad \kappa_k^{a,+} := \inf\{t>0 \colon U_k(t)\geq a\},
\end{align*}
with the convention that $\inf \emptyset=\infty$.



First, we state the result for the two-sided exit problem for $k$-multi-refracted L\'evy processes.
The special case with $k=1$ was already derived in Theorem 4 of  Kyprianou and Loeffen \cite{kyprianouloeffen2010}. 
We remark that for $X_k$ and $U_1$ we have, for example for $a > d$ and $x \leq a$, that
\begin{align}
\e_x \left[ \mathrm{e}^{-q \tau_k^{a,+}} \ind_{\{\tau_k^{a,+}< \tau_k^{d,-}\}} \right] &= \frac{W_k^{(q)}(x-d)}{W_k^{(q)}(a-d)},  \quad k \geq 0,\\ \label{two-sided_kypr&loeffen}
\e_x \left[ \mathrm{e}^{-q \kappa_1^{a,+}} \ind_{\{\kappa_1^{a,+}< \kappa_1^{d,-}\}} \right] &= \frac{\wq_1(x;d)}{\wq_1(a;d)} ,
\end{align}
where the latter holds by \cite{kyprianouloeffen2010}.

\begin{theorem}{\textbf{(Two-sided exit problem)}}\label{th:mr.recursion}

Fix $k \geq 1$ and $q \geq 0$. 
\begin{itemize}
\item[(i)]  For  $d < b_1<\dots< b_k\leq a$ and $d \leq x \leq a$, we have 
\begin{equation}\label{main_twosided_up}
\e_x \left[ \mathrm{e}^{-q \kappa_k^{a,+}} \ind_{\{\kappa_k^{a,+}< \kappa_k^{d,-}\}} \right] =\frac{\wq_k(x;d)}{\wq_k(a;d)},
\end{equation} 
where $\wq_k$ is defined by the recursion
\begin{equation}\label{eq:mr.recursion}
  \wq_k(x;d):= \wq_{k-1}(x;d)+\delta_{k}\int^{x}_{b_{k}}W_k^{\left(q\right)}(x-y)w_{k-1}^{(q)\prime}(y;d)\md y.
   \end{equation}
The function $\wq_{k-1}(x;d)$ is the scale function associated with the process $U_{k-1}$, and the initial function is given by $w_{0}^{\left(q\right)}(x;d)=W^{(q)}(x-d)$.
\item[(ii)] For $0 < b_1<\dots< b_k\leq a$ and $0 \leq x \leq a$,
	\begin{equation}\label{main_twosided_down}
	\e_x \left[ \mathrm{e}^{-q \kappa_k^{0,-}} \ind_{\{\kappa_k^{0,-}<\kappa_k^{a,+}\}} \right] =z_k^{(q)}(x)-\frac{z^{(q)}_k(a)}{\wq_k(a)}\wq_k(x),
	\end{equation}
	where $z^{(q)}_{k}$ is defined by the recursion
	\begin{equation}\label{eq:mr.recursion_z_q}
	z^{(q)}_{k}(x):=z^{(q)}_{k-1}(x)+\delta_{k}\int^{x}_{b_{k}}W_k^{\left(q\right)}(x-y)z_{k-1}^{\left(q\right)\prime}(y)\md y.
	\end{equation}
	The scale function $z^{(q)}_{k-1}(x)$ is associated with the process $U_{k-1}$, and the initial function is given by $z_{0}^{\left(q\right)}(x)=Z^{(q)}(x)$. 

\end{itemize}
\end{theorem}

%

\begin{corol}{\textbf{(One-sided exit problem)}}\label{one_sided}\\ 
Fix $k \geq 1$. 

\begin{itemize}
	\item[(i)] For $x \geq 0$, $b_1 > 0$ and $q>0$, we have
		\begin{equation}\label{main_onesided_down}
	\e_x \left[ \mathrm{e}^{-q \kappa_k^{0,-}} \ind_{\{\kappa_k^{0,-}<\infty\}} \right] =z_k^{(q)}(x)-\frac{\int_{b_k}^{\infty} 
	e^{-\varphi_k(q)z}z_{k-1}^{(q)\prime}(z)\md z }{\int^{\infty}_{b_{k}}e^{-\varphi_k(q)z}w_{k-1}^{(q)\prime}(z)\md z}\wq_k(x).
	\end{equation}
	
	\item[(ii)]
	For $b_1<\dots <b_k\leq a$, $x \leq a$ and $q\geq 0$,
	\begin{equation}\label{main_onesided_up}
	\e_x \left[ \mathrm{e}^{-q \kappa_k^{a,+}} \ind_{\{\kappa_k^{a,+}<\infty\}} \right] =
	\frac{u^{(q)}_k(x)}{u^{(q)}_k(a)}.
	\end{equation}
	Here,
	$u^{(q)}_{k}$ is defined by the recursion
\begin{equation}\label{eq:mr.recursion_u_q}
	u^{(q)}_{k}(x) := u^{(q)}_{k-1}(x)+\delta_{k}\int^{x}_{b_{k}}W_k^{\left(q\right)}(x-y)u_{k-1}^{\left(q\right)\prime}(y)\md y,
	\end{equation}
	where the function $u^{(q)}_{k-1}(x)$ is associated with the process $U_{k-1}$, and the initial function is given by $u_{0}^{\left(q\right)}(x)=e^{\Phi(q)x }$. 
	\end{itemize}
	\end{corol}
For $q=0$, we  write $w_k^{(0)}(x;d)=w_k(x;d)$ with $k \geq 0$.
Moreover, for $d=0$ we denote $w_k^{(q)}(x;0)=w_k^{(q)}(x)$.
Furthermore, we also use the same convention for the functions $z^{(q)}_k$ and $u^{(q)}_k$.

\begin{theorem}{\textbf{(Resolvents)}}\label{Resolvents}\\
Fix a Borel set $\mathcal{B} \subseteq \mathbb{R}$ and $k \geq 1$. 


\begin{itemize}
\item[(i)] For $d < b_1<\dots< b_k\leq a$, $d \leq x \leq a$ and $q \geq 0$, 
\begin{align}\label{resolvent1}
\e_x\left[\int_0^{\kappa_k^{a,+}\wedge \kappa_k^{d,-}}e^{-qt}\ind_{\{U_k(t)\in \mathcal{B}\}}dt\right]
= \int_{\mathcal{B} \cap (d, a)} \frac{ \frac{w_k^{(q)}(x;d)}{w_k^{(q)}(a;d)}w_k^{(q)}(a;y)-w_k^{(q)}(x;y)}{ \Xi_{\phi_k}(y)} dy,
\end{align}

where the scale function $\wq_{k}(x;z)$ is defined in Theorem \ref{th:mr.recursion} (i).
\item[(ii)] For $x \geq 0$, $b_1>0$ and $q > 0$,
\begin{align}\label{resolvent2}
\e_x\left[\int_0^{\kappa_k^{0,-}}e^{-qt}\ind_{\{U_k(t)\in \mathcal{B}\}}dt\right]
= \int_{\mathcal{B} \cap (0, \infty)} \frac{\frac{w_k^{(q)}(x)}{v_k^{(q)}(0)}v_k^{(q)}(y)-w_k^{(q)}(x;y)}
{\Xi_{\phi_k}(y)} dy,
\end{align}
where $v_{k}^{(q)}(y):=\delta_{k}\int^{\infty}_{b_{k}}e^{-\varphi_k(q)z}w_{k-1}^{(q)\prime}(z;y)\md z$, and the scale function
 $\wq_{k}(x;z)$ is defined in Theorem \ref{th:mr.recursion} (i).
\item[(iii)] For  $ x, b_k \leq a$ and $q \geq 0$, 
	\begin{align}\label{resolvent3}
		\e_x\left[\int_0^{\kappa_k^{a,+}}e^{-qt}\ind_{\{U_k(t)\in \mathcal{B}\}}dt\right]
		=\int_{\mathcal{B} \cap (-\infty,a)} \frac{\frac{u_k^{(q)}(x)}{u_k^{(q)}(a)}w_k^{(q)}(a;y)-w_k^{(q)}(x;y)}
		{\Xi_{\phi_k}(y)} dy,
	\end{align}
	where the functions $w_k^{(q)}(x;y)$ and $u_k^{(q)}(x)$ are defined in Theorems \ref{th:mr.recursion} and \ref{one_sided}, respectively.
\item[(iv)] For $ x \in \mathbb{R}$ and $q > 0$,
	\begin{align}\label{resolvent4}
		\e_x\left[\int_0^{\infty}e^{-qt}\ind_{\{U_k(t)\in \mathcal{B}\}}dt\right]
		= \int_{\mathcal{B} }  \frac{\frac{u_k^{(q)}(x) \int_{b_k}^{\infty} e^{-\varphi_k(q)z} w_{k-1}^{(q)\prime}(z;y) dz  }
		{\int_{b_k}^{\infty} e^{-\varphi_k(q)z} u_{k-1}^{(q)\prime}(z) dz }-w_k^{(q)}(x;y)}
		{\Xi_{\phi_k}(y)} dy,
	\end{align}
where the functions $w_k^{(q)}(x;y)$ and $u_k^{(q)}(x)$ are defined in Theorems \ref{th:mr.recursion} and \ref{one_sided}, respectively. 
\end{itemize}
	\end{theorem}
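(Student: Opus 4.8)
The plan is to prove all four parts by induction on $k$, viewing the $k$-multi-refracted process $U_k$ as the refraction, at level $b_k$ with rate $\delta_k$, of the $(k-1)$-multi-refracted process $U_{k-1}$: while $U_k$ is below $b_k$ it coincides with $U_{k-1}$, and while it is above $b_k$ it coincides with the spectrally negative L\'evy process $X_k=X_{k-1}-\delta_k t$. This is exactly the set-up of Kyprianou and Loeffen \cite{kyprianouloeffen2010}, except that the driving process there is a genuine L\'evy process whereas here the ``base'' process $U_{k-1}$ is only the strong Markov process from the preceding lemma; accordingly, in the inductive step we are allowed to use only those properties of $U_{k-1}$ that have already been established, namely the exit identities of Theorem~\ref{th:mr.recursion} and Corollary~\ref{one_sided} and the resolvent identity (i) for index $k-1$. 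The base case is $k=0$, where $U_0=X$, $w_0^{(q)}(x;d)=\qscale(x-d)$, $u_0^{(q)}(x)=e^{\Phi(q)x}$, $z_0^{(q)}=\zscale$ and $\Xi_{\phi_0}\equiv1$, so that (i)--(iv) reduce to the classical resolvent formulas for a spectrally negative L\'evy process.

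The heart of the argument is part (i). Fix a bounded measurable $f$ and, for $d<b_1<\dots<b_k\le a$, set $G_k(x):=\e_x\big[\int_0^{\kappa_k^{a,+}\wedge\kappa_k^{d,-}}e^{-qt}f(U_k(t))\,\md t\big]$. Decompose the path of $U_k$ at the successive passage times across $b_k$: on the excursions of $U_k$ inside $(d,b_k]$ it runs as $U_{k-1}$ killed on exiting $(d,b_k)$, whose occupation is given by (i) for $k-1$; on the excursions inside $[b_k,a)$ it runs as $X_k$ killed on exiting $(b_k,a)$, whose occupation is the classical expression in $\qscale_k:=W_k^{(q)}$. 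Gluing these via the strong Markov property of $U_{k-1}$ at $\kappa^{b_k,+}$ and of $X_k$ at its first down-crossing of $b_k$ (a strict overshoot in the bounded-variation case, a creeping in the unbounded-variation case), and summing the resulting renewal/geometric series over successive excursions above $b_k$, produces a linear relation for $G_k$; its solution telescopes into
\[
G_k(x)=\int_{\mathcal B\cap(d,a)}\frac{\tfrac{w_k^{(q)}(x;d)}{w_k^{(q)}(a;d)}\,w_k^{(q)}(a;y)-w_k^{(q)}(x;y)}{\Xi_{\phi_k}(y)}\,\md y,
\]
where the recursion \eqref{eq:mr.recursion} for $w_k^{(q)}$ is precisely what the gluing generates, and the denominator accumulates one extra creeping factor, so that $\Xi_{\phi_k}(y)=\Xi_{\phi_{k-1}}(y)\bigl(1-\delta_k W_{k-1}^{(q)}(0)\bigr)$ for $y>b_k$ and $\Xi_{\phi_k}(y)=\Xi_{\phi_{k-1}}(y)$ for $y<b_k$ (the correction is vacuous when $\sigma>0$ or $\int_0^1 z\,\Pi(\md z)=\infty$, since then $W^{(q)}(0)=0$).

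Parts (ii)--(iv) are then obtained from (i) by limiting arguments, with the normalizing constants identified through Theorem~\ref{th:mr.recursion} and Corollary~\ref{one_sided}. Taking $a\to\infty$ in (i) with $d=0$ (monotone/dominated convergence on the left, using $q>0$) and the large-argument asymptotics $w_k^{(q)}(a;y)\sim C(q)\,v_k^{(q)}(y)\,e^{\varphi_k(q)a}$, with $v_k^{(q)}(y)=\delta_k\int_{b_k}^\infty e^{-\varphi_k(q)z}w_{k-1}^{(q)\prime}(z;y)\,\md z$, gives \eqref{resolvent2}; letting $d\to-\infty$ in (i) after the renormalization that turns $w_k^{(q)}(\cdot\,;d)$ into $u_k^{(q)}$ — the same passage through which $u_1^{(q)}$ arises from $\qscale$ — gives \eqref{resolvent3}; and letting $a\to\infty$ in \eqref{resolvent3}, so that $\tfrac{u_k^{(q)}(x)}{u_k^{(q)}(a)}w_k^{(q)}(a;y)$ converges to the quotient of the two $e^{-\varphi_k(q)z}$-integrals in \eqref{resolvent4}, gives \eqref{resolvent4}.

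The main obstacle is the gluing step in (i): one must reproduce the Kyprianou--Loeffen excursion/strong-Markov decomposition using only the exit and resolvent identities available for index $k-1$, and in particular carefully handle the down-crossings of $b_k$, which in the bounded-variation case land at a random point strictly below $b_k$ and thus re-enter the region governed by $U_{k-1}$, so that the renewal-type equation for $G_k$ actually closes into the stated closed form. A secondary technical point, needed for (ii)--(iv), is to establish the exponential asymptotics of $w_k^{(q)}$ and $u_k^{(q)}$ as their argument tends to infinity; these can be read off inductively from the recursions \eqref{eq:mr.recursion} and \eqref{eq:mr.recursion_u_q} together with the classical growth rate $\varphi_k(q)$ of $W_k^{(q)}$.
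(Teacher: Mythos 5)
Your plan for part (i) reproduces the paper's argument for the bounded-variation case: induction on $k$, strong Markov decomposition at the first up/down-crossing of $b_k$ using the inductive hypothesis on $(d,b_k)$ and the known resolvent of $X_k$ on $(b_k,a)$, and solving the resulting one-step renewal equation for $V^{(q)}(b_k,\cdot)$ (equivalent to summing your geometric series). Your derivation of (ii)--(iv) by the three successive limits $a\uparrow\infty$, $d\downarrow-\infty$, $a\uparrow\infty$, with the asymptotics of $w_k^{(q)}$ and $u_k^{(q)}$ read off inductively from the recursions, is also exactly what the paper does.

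The gap is in the unbounded-variation case of part (i). You propose to run the same strong-Markov/renewal decomposition, just allowing the first down-crossing of $b_k$ to be a creeping rather than a strict overshoot. But the closed-form solution of the renewal equation in the paper leans on $W_k^{(q)}(0)=\bigl(c-\sum_{j\le k}\delta_j\bigr)^{-1}>0$ at the step \eqref{p_for_b_k}: setting $x=b_k$ there introduces the factor $\bigl(c-\delta_1-\cdots-\delta_k\bigr)W_k^{(q)}(a-b_k)w_{k-1}^{(q)}(b_k;d)$, and this together with the Laplace-transform inversion \eqref{integrals1}--\eqref{integrals3} is what turns the two-piece formula into the single recursion \eqref{eq:mr.recursion}. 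When $X$ has unbounded variation one has $W_k^{(q)}(0)=0$, and the undershoot law of $X_k$ at $\tau_k^{b_k,-}$ picks up an atom at $b_k$ (creeping) whose weight is not captured by the $\Pi$-integral; the paper does not attempt to redo the computation with that atom. Instead it establishes (i) for unbounded variation by a completely different route: approximate $X$ by bounded-variation processes $X^{(n)}$, show that the corresponding multi-refracted processes $U_k^{(n)}$ and their scale functions $w_k^{(q),(n)}$ converge (Appendix~\ref{UBVcase}), and pass to the limit in the bounded-variation resolvent identity. If you want your inductive excursion argument to cover unbounded variation directly you would have to supply the creeping correction (a second-scale-function term), and verify that the Laplace inversion still closes; as written, the proposal asserts the closed form holds but does not justify it. Either add the approximation step, or fill in the creeping term and the modified inversion.
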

	Note that in the above expressions, the derivatives of the scale functions $w^{(q)}_k$, $z^{(q)}_k$, and $u^{(q)}_k$ appear.
	We refer to Lemma \ref{Smoothness} for a further explanation regarding why the integrals given in the above identities are well defined.

\begin{corol}{\textbf{(Ruin probability)}}\label{ruin_probab}
For any $k\ge 1$, $b_1 > 0$, $x\geq0$, and $0<\sum_{j=1}^k\delta_j<\mathbb{E}[X(1)]$, we have
\begin{align}\label{rp2}
\mathbb{P}_x\left(\kappa_k^{0,-}<\infty\right)=1-\frac{\mathbb{E}[X(1)]-\sum_{j=1}^k\delta_j}{1-\sum_{j=1}^{k} \delta_j w_{j-1}(b_j)}w_k(x).
\end{align}
\end{corol}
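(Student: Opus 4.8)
The plan is to obtain the ruin probability $\mathbb{P}_x(\kappa_k^{0,-}<\infty)$ by letting $q\downarrow 0$ in the one-sided downward exit identity \eqref{main_onesided_down} of Corollary \ref{one_sided}(i). Since $0<\sum_{j=1}^k\delta_j<\mathbb{E}[X_1]=\psi'(0+)$, the drift-adjusted process $X_k$ still drifts to $+\infty$, so $\kappa_k^{0,-}<\infty$ with probability strictly less than $1$, and $\mathbb{P}_x(\kappa_k^{0,-}<\infty)=\lim_{q\downarrow 0}\mathbb{E}_x[\me^{-q\kappa_k^{0,-}}\ind_{\{\kappa_k^{0,-}<\infty\}}]$ by monotone/dominated convergence. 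Thus I need to compute the $q\downarrow 0$ limit of
\[
z_k^{(q)}(x)-\frac{\int_{b_k}^{\infty}\me^{-\varphi_k(q)z}z_{k-1}^{(q)\prime}(z)\,\md z}{\int_{b_k}^{\infty}\me^{-\varphi_k(q)z}w_{k-1}^{(q)\prime}(z)\,\md z}\,\wq_k(x).
\]

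First I would record the easy limits: $z_k^{(q)}(x)\to z_k(x)=1$ (since $z_k=z_k^{(0)}\equiv 1$, which follows inductively from the recursion \eqref{eq:mr.recursion_z_q} because $Z^{(q)}\to 1$ and the integral term vanishes as $q\to0$), and $\wq_k(x)\to w_k(x)$. The main work is the ratio of the two integrals. Because $\sum_{j=1}^k\delta_j<\psi'(0+)$ implies $\psi_k'(0+)>0$, we have $\varphi_k(q)\downarrow 0$ as $q\downarrow 0$, with the standard asymptotics $\varphi_k(q)\sim q/\psi_k'(0+)$. Both $\int_{b_k}^\infty \me^{-\varphi_k(q)z} z_{k-1}^{(q)\prime}(z)\,\md z$ and $\int_{b_k}^\infty \me^{-\varphi_k(q)z} w_{k-1}^{(q)\prime}(z)\,\md z$ may individually blow up as $q\to0$ (the scale functions $w_{k-1}^{(q)}$ grow like $\me^{\varphi_{k-1}(q)z}$ at infinity), so the key step is to extract the leading order of each and show the divergent parts cancel in the ratio, leaving a finite limit. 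The cleanest route is to identify each integral with a Laplace-type functional: $\int_{b_k}^\infty \me^{-\varphi_k(q)z} w_{k-1}^{(q)\prime}(z)\,\md z$ should, via the known Laplace transform \eqref{laplace_W_k} for $W_{k-1}^{(q)}$ and the integral representation \eqref{eq:mr.recursion}–\eqref{small w} of $w_{k-1}^{(q)}$, reduce to an explicit rational expression in $\varphi_k(q),\varphi_{k-1}(q),\dots$ and the $\delta_j$'s. I expect, after simplification, that
\[
\lim_{q\downarrow0}\frac{1}{\int_{b_k}^\infty \me^{-\varphi_k(q)z} w_{k-1}^{(q)\prime}(z)\,\md z}=\frac{\psi'(0+)-\sum_{j=1}^k\delta_j}{\,1-\sum_{j=1}^k\delta_j w_{j-1}(b_j)\,},
\]
while $\int_{b_k}^\infty \me^{-\varphi_k(q)z} z_{k-1}^{(q)\prime}(z)\,\md z\to \int_{b_k}^\infty z_{k-1}'(z)\,\md z$; but $z_{k-1}\equiv 1$ so $z_{k-1}'\equiv 0$ there — hence the numerator integral tends to $0$ and I must instead track its first-order term in $q$, which (by differentiating the recursion \eqref{eq:mr.recursion_z_q} in $q$ at $q=0$, or equivalently using $\partial_q z_{k-1}^{(q)}\big|_{q=0}=\int_0^\cdot w_{k-1}(y)\,\md y$) behaves like $q\int_{b_k}^\infty w_{k-1}(z)\,\md z$ up to corrections; combined with $\varphi_k(q)\sim q/\psi_k'(0+)$ this produces precisely the constant above times $w_k(x)$.

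So the ordered steps are: (1) justify passing to the limit $q\downarrow 0$ in \eqref{main_onesided_down} and reduce to computing the limit of the ratio; (2) compute $\lim z_k^{(q)}(x)=1$ and $\lim \wq_k(x)=w_k(x)$, establishing $z_k\equiv 1$ and $z_{k-1}'\equiv 0$ on $[b_k,\infty)$; (3) evaluate the denominator integral's divergence rate explicitly using \eqref{laplace_W_k} and the recursive formula for $w_{k-1}^{(q)}$, getting the factor $\big(1-\sum_{j=1}^k\delta_j w_{j-1}(b_j)\big)/\big(\psi'(0+)-\sum_{j=1}^k\delta_j\big)$ after a telescoping/induction argument on $k$; (4) evaluate the numerator integral to first order in $q$ via the $q$-derivative of the $z^{(q)}$-recursion, so that the whole ratio, multiplied out, yields the stated answer. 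The main obstacle is step (3)–(4): correctly extracting the matching singular behaviours of the two integrals and showing the cancellation is exact, which requires careful use of the Laplace-transform characterization and the nested structure of the recursions rather than any single clean identity. An alternative that sidesteps the delicate asymptotics is to run a direct induction on $k$: assume \eqref{rp2} holds for $k-1$ (the base case $k=1$ being Kyprianou–Loeffen), condition on the behaviour of $U_k$ relative to the top level $b_k$ via the strong Markov property and the two-sided identities of Theorem \ref{th:mr.recursion}, and solve the resulting linear relation for $\mathbb{P}_x(\kappa_k^{0,-}<\infty)$; I would pursue whichever of the two is shorter, but both hinge on the same algebraic identity $1-\sum_{j=1}^k\delta_j w_{j-1}(b_j)=\prod\text{-type constant}$ linking the nested scale functions at the refraction levels.
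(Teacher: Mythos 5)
Your route is genuinely different from the paper's, and the asymptotic step at its core has a gap as you have sketched it. You propose letting $q\downarrow 0$ in the one-sided identity \eqref{main_onesided_down}, which for each $q>0$ already encodes the $a\to\infty$ limit; the paper instead reverses the order of limits, setting $q=0$ first in the two-sided formula \eqref{main_twosided_down} so that $z_k\equiv 1$ reduces it at once to $\mathbb{P}_x(\kappa_k^{0,-}<\kappa_k^{a,+})=1-w_k(x)/w_k(a)$, and only then sends $a\to\infty$. In the paper's order the only remaining task is to show that $\lim_{a\to\infty}w_k(a)=\bigl(1-\sum_{j=1}^k\delta_j w_{j-1}(b_j)\bigr)/\bigl(\mathbb{E}[X_1]-\sum_{j=1}^k\delta_j\bigr)$, which is a short induction on $k$ using the recursion \eqref{eq:mr.recursion} and the classical facts $W(\infty)=1/\mathbb{E}[X_1]$, $W_k(\infty)=1/(\mathbb{E}[X_1]-\sum_{j\leq k}\delta_j)$; no indeterminate $0\cdot\infty$ forms arise.

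Your order produces exactly the $0\cdot\infty$ cancellation that your sketch then mishandles. The claim that $\int_{b_k}^{\infty}\me^{-\varphi_k(q)z}z_{k-1}^{(q)\prime}(z)\,\md z\to 0$ because the pointwise limit $z_{k-1}'\equiv 0$ is not correct: as $q\downarrow 0$ one has $\varphi_k(q)\downarrow 0$ and, for $q>0$, $z_{k-1}^{(q)\prime}(z)$ grows like $\me^{\varphi_{k-1}(q)z}$ for large $z$, so the integrand decays only at the vanishing rate $\varphi_k(q)-\varphi_{k-1}(q)$, and the integral in fact converges to the strictly positive constant $(\mathbb{E}[X_1]-\sum_{j\leq k}\delta_j)/\delta_k$. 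The proposed repair---replacing $z_{k-1}^{(q)\prime}(z)$ by $q\,w_{k-1}(z)$ and writing the numerator as $q\int_{b_k}^{\infty}w_{k-1}(z)\,\md z$---is also ill-posed: $w_{k-1}(z)$ tends to a positive constant, so that integral diverges, and the neglected difference $z_{k-1}^{(q)\prime}(z)-qw_{k-1}(z)$ is not uniformly small on $[b_k,\infty)$. To make your route rigorous you would have to keep the $q$-dependence of the scale functions inside both integrals and extract the matching asymptotics from the Laplace-transform identity \eqref{laplace_W_k} applied to the recursions \eqref{eq:mr.recursion} and \eqref{eq:mr.recursion_z_q}; this is feasible but substantially more work than the paper's order of limits, which is the one you should adopt.
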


The proofs of the above theorems and corollaries  are given in the Appendix, because the arguments tend to be technical, and the results intuitively 
hold in a similar manner to the case presented in \cite{kyprianouloeffen2010}. 
In Appendix \ref{BVcase}, we begin by providing the proofs of the identities \eqref{main_twosided_up} and \eqref{resolvent1} under the assumption
that $X$ is a L\'evy process that has paths of bounded variation. 
The proofs for the case of unbounded variation are presented in Appendix \ref{UBVcase}.
We remark here that our further reasoning is common for a general L\'evy process $X$, and hence
using formulas \eqref{main_twosided_up} and \eqref{resolvent1} we obtain the remainder of the identities for a general class of spectrally negative L\'evy processes.
For details, we refer the reader to Appendix \ref{Generalcase}.

\subsection{Analytical properties of multi-refracted scale functions}
In this section, we summarize the properties of the multi-refracted scale functions
  $w_k^{(q)}$ and $z_k^{(q)}$, which will be crucial for further proofs in this paper. Moreover, the following results
are important for many applied probability models, such as optimal dividend problems. 

\begin{lemma}{\textbf{(Smoothness)}}\label{Smoothness}
 In general,  $w_k^{(q)}(\cdot;d)$ is a.e.~continuously differentiable.
In particular, if $X$ is of unbounded variation, then $w_k^{(q)}(\cdot;d)$ is $C^1(d,\infty)$. 
 On the other hand, if we assume that $W_k^{(q)}(\cdot-d) \in C^1(d,\infty)$ for all $k \geq 0$ and $X$ is of bounded variation, 
then $w_k^{(q)}(\cdot;d)$ is also $C^1((d,\infty) \backslash \{ b_1, \ldots, b_k\})$, where in particular $w_0^{(q)}(\cdot;d)$ is $C^1(d,\infty)$. 
	\end{lemma}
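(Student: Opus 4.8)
The plan is to induct on $k$, exploiting the recursion \eqref{eq:mr.recursion} that defines $w_k^{(q)}(\cdot;d)$ in terms of $w_{k-1}^{(q)}(\cdot;d)$ and the scale function $W_k^{(q)}$ of the spectrally negative L\'evy process $X_k$. The base case is $w_0^{(q)}(\cdot;d)=W^{(q)}(\cdot-d)$, whose regularity is classical: $W^{(q)}$ is always a.e.~differentiable, it is $C^1(0,\infty)$ when $X$ is of unbounded variation, and we are assuming it is $C^1(0,\infty)$ in the bounded variation sub-case treated at the end of the statement. So the work is entirely in the inductive step, namely controlling the smoothness of the convolution-type integral
\[
I_k(x):=\delta_k\int_{b_k}^x W_k^{(q)}(x-y)\,w_{k-1}^{(q)\prime}(y;d)\,\md y,
\]
which appears in $w_k^{(q)}(x;d)=w_{k-1}^{(q)}(x;d)+I_k(x)$.

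First I would record the standing facts about $W_k^{(q)}$: it is continuous and strictly increasing on $[0,\infty)$, vanishes on $(-\infty,0)$, has $W_k^{(q)}(0)=0$ in the unbounded variation case and $W_k^{(q)}(0)=1/(c-\sum_{j\le k}\delta_j)>0$ in the bounded variation case, and (by the same argument used for $W^{(q)}$ of a general spectrally negative L\'evy process) it is a.e.~continuously differentiable, is $C^1(0,\infty)$ in the unbounded variation case, and is assumed $C^1(0,\infty)$ in the relevant bounded variation case. Next I would differentiate $I_k$. In the unbounded variation case, since $W_k^{(q)}(0)=0$, a substitution $z=x-y$ gives $I_k(x)=\delta_k\int_0^{x-b_k}W_k^{(q)}(z)\,w_{k-1}^{(q)\prime}(x-z;d)\,\md z$; differentiating under the integral (justified because $W_k^{(q)}$ is continuous and, by the inductive hypothesis, $w_{k-1}^{(q)}(\cdot;d)\in C^1(d,\infty)$ so $w_{k-1}^{(q)\prime}(\cdot;d)$ is continuous there) yields
\[
I_k'(x)=\delta_k W_k^{(q)}(0)\,w_{k-1}^{(q)\prime}(x;d)+\delta_k\int_{b_k}^x W_k^{(q)\prime}(x-y)\,w_{k-1}^{(q)\prime}(y;d)\,\md y = \delta_k\int_{b_k}^x W_k^{(q)\prime}(x-y)\,w_{k-1}^{(q)\prime}(y;d)\,\md y,
\]
which is continuous in $x$ on $(d,\infty)$ because $W_k^{(q)\prime}$ is continuous on $(0,\infty)$ and bounded near $0$ (again using $W_k^{(q)}(0)=0$; near $y=x$ the integrand stays integrable). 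Hence $w_k^{(q)}(\cdot;d)\in C^1(d,\infty)$, closing the induction. In the bounded variation case, the same differentiation is valid, but the boundary term $\delta_k W_k^{(q)}(0)w_{k-1}^{(q)\prime}(x;d)$ no longer vanishes; it is continuous on $(d,\infty)\setminus\{b_1,\dots,b_{k-1}\}$ by the inductive hypothesis, and the convolution integral with $W_k^{(q)\prime}$ is continuous everywhere on $(d,\infty)$; in addition, at $x=b_k$ the integral $I_k$ is continuous but its derivative picks up a jump because the lower limit activates, so $w_k^{(q)}(\cdot;d)$ fails $C^1$ only at $\{b_1,\dots,b_k\}$ and is $C^1$ on $(d,\infty)\setminus\{b_1,\dots,b_k\}$. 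For the general ``a.e.~continuously differentiable'' assertion without any extra hypothesis on $W_k^{(q)}$, I would note that each $W_k^{(q)}$ is a.e.~$C^1$, so $w_{k-1}^{(q)\prime}(\cdot;d)$ exists a.e., and the convolution $I_k$ is then a.e.~$C^1$ by a routine Fubini/absolute-continuity argument, transferring a.e.~differentiability up the recursion.

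The main obstacle is the bounded variation case: one must be careful that differentiating under the integral sign in $I_k$ is legitimate despite the potential non-smoothness of $w_{k-1}^{(q)\prime}(\cdot;d)$ at the finitely many points $b_1,\dots,b_{k-1}$ and the behaviour of $W_k^{(q)\prime}$ near $0$. I would handle this by splitting $(b_k,x)$ at the points $b_j$ and at a small neighbourhood of the upper endpoint, using dominated convergence for difference quotients on each piece, and tracking which endpoint contributions survive; the net effect is exactly the exclusion set $\{b_1,\dots,b_k\}$ claimed. The unbounded variation case and the ``a.e.'' statement are comparatively soft once the base regularity of $W_k^{(q)}$ is in hand.
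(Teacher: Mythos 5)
Your proposal is correct and takes essentially the same route as the paper: both arguments induct on $k$, differentiate the recursion \eqref{eq:mr.recursion} to obtain $w_k^{(q)\prime}(x;d)=w_{k-1}^{(q)\prime}(x;d)\bigl(1+\delta_k W_k^{(q)}(0)\bigr)+\delta_k\int_{b_k}^x W_k^{(q)\prime}(x-y)w_{k-1}^{(q)\prime}(y;d)\,\md y$ for $x>b_k$, and then read off from the sign of $W_k^{(q)}(0)$ whether the derivative is continuous across $b_k$ (unbounded variation, $W_k^{(q)}(0)=0$) or jumps there (bounded variation, $W_k^{(q)}(0)>0$). One small caveat: your assertion that $W_k^{(q)\prime}$ is "bounded near $0$" in the unbounded-variation case is false when $\sigma=0$ (then $W_k^{(q)\prime}(0+)=\infty$); the correct justification, which you in fact also give, is that $W_k^{(q)\prime}$ is locally integrable near $0$, which suffices for continuity of the convolution term.
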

	\begin{proof}
	 
First take $k=0$. Because $w_0^{(q)}(\cdot;d)=W^{(q)}(\cdot-d)$, the statement follows from Lemma 2.4 in \cite{kuznetsovetal2012}.


Next, for induction, we assume that for the bounded variation case $w_{k-1}^{(q)}(x;d) \in C^1(d,\infty)$ except at $b_1,...,b_{k-1}$.
From \eqref{eq:mr.recursion}, we compute
$$ w_k^{(q)\prime}(x;d)=\left\{\begin{array}{cc}
w_{k-1}^{(q)\prime}(x;d)& \textrm{for $x < b_k$} \\ 
w_{k-1}^{(q)\prime}(x;d)(1+\delta_k W_k^{(q)}(0))+\delta_1\int_{b_k}^x W_k^{(q)\prime}(x-z)w_{k-1}^{(q)\prime}(z;d)dz & \textrm{for $x > b_k.$}
\end{array} \right..
$$
The above equation, together with the inductive assumption and the fact that function $W_k^{(q)} $ is in general a.e.~continuously differentiable, 
gives that $w_k^{(q)}(x;d)$ is also a.e.~continuously differentiable. 
Moreover, for the unbounded variation case, because $W_k^{(q)}(0)=0$ we have that
$\lim_{x \to b_k-}w_k^{(q)\prime}(x;d)=w_{k-1}^{(q)\prime}(b_k;d)= \lim_{x \to b_k +}w_k^{(q)\prime}(x;d)$. 
Then, because $W_k^{(q)}(\cdot-d)$, $w_{k-1}^{(q)}(\cdot;d) \in C^1(d,\infty)$ we obtain that $w_k^{(q)}(\cdot;d)$ is $C^1(d,\infty)$. 
Finally, for the bounded variation case we have that $W_k^{(q)}(0)>0$, and then 
\begin{align*}
\lim_{x \to b_k-}w_k^{(q)\prime}(x;d)=w_{k-1}^{(q)\prime}(b_k;d) \neq 
w_{k-1}^{(q)\prime}(b_k;d)(1+\delta_k W_k^{(q)}(0))= \lim_{x \to b_k+}w_k^{(q)\prime}(x;d),
\end{align*}
so that the derivative does not exist at $b_k$. 
Furthermore, for the bounded variation case it follows from the inductive assumption and the assumption that $W_k^{(q)}(\cdot-d) \in C^1(d,\infty)$ that
$w_k^{(q)}$ belongs to $C^1((d,\infty) \backslash \{ b_1, \ldots, b_k\})$.
 
\end{proof}

\begin{lemma}{\textbf{(Monotonicity)}}
The scale function $w_k^{(q)}$ defined in \eqref{eq:mr.recursion} is an increasing function for $k\geq 0$ and $x\geq d$.
\end{lemma}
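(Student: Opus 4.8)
The plan is to prove the claim by induction on $k$, exactly paralleling the recursive structure that defines $w_k^{(q)}$ in \eqref{eq:mr.recursion} and the smoothness bookkeeping already carried out in Lemma \ref{Smoothness}. For the base case $k=0$ we have $w_0^{(q)}(x;d)=W^{(q)}(x-d)$, which is known to be strictly increasing on $[d,\infty)$ and zero before $d$, so monotonicity holds. For $k=1$ we may invoke the known result of Kyprianou and Loeffen \cite{kyprianouloeffen2010}, or derive it from the computation below with $w_0^{(q)}$ in place of $w_{k-1}^{(q)}$.

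For the inductive step, assume $w_{k-1}^{(q)}(\cdot;d)$ is increasing on $[d,\infty)$. I would use the derivative formula already recorded in the proof of Lemma \ref{Smoothness}: for $x<b_k$ we have $w_k^{(q)\prime}(x;d)=w_{k-1}^{(q)\prime}(x;d)\ge 0$ by the inductive hypothesis, while for $x>b_k$,
\begin{equation*}
w_k^{(q)\prime}(x;d)=w_{k-1}^{(q)\prime}(x;d)\bigl(1+\delta_k W_k^{(q)}(0)\bigr)+\delta_k\int_{b_k}^x W_k^{(q)\prime}(x-z)\,w_{k-1}^{(q)\prime}(z;d)\,\md z.
\end{equation*}
Both terms on the right are non-negative: $1+\delta_k W_k^{(q)}(0)>0$ since $W_k^{(q)}(0)\ge 0$; $w_{k-1}^{(q)\prime}\ge 0$ by hypothesis; and $W_k^{(q)\prime}\ge 0$ because $W_k^{(q)}$ is increasing (indeed $W_k^{(q)}$ is absolutely continuous with non-negative density away from the origin — a standard fact about scale functions). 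Hence $w_k^{(q)\prime}(x;d)\ge 0$ wherever the derivative exists, which by Lemma \ref{Smoothness} is all but countably many points, so $w_k^{(q)}(\cdot;d)$ is non-decreasing. To upgrade ``non-decreasing'' to ``increasing,'' I would note that on $(d,b_1)$ we have $w_k^{(q)}(\cdot;d)=W^{(q)}(\cdot-d)$, which is strictly increasing, and that across each level $b_j$ the jump $\delta_j W_j^{(q)}(0)\cdot w_{j-1}^{(q)\prime}(b_j^-;d)$ in the derivative (bounded-variation case) only adds positive mass, so strict monotonicity propagates; alternatively, since $w_k^{(q)}$ is continuous and non-decreasing with a strictly increasing piece, and each recursive step adds a non-negative integral term, strictness is preserved.

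The main technical point to get right — not a deep obstacle but the place where care is needed — is the sign of $W_k^{(q)\prime}$ and the handling of the non-differentiability points at $b_1,\dots,b_k$ in the bounded-variation case. For the former I would cite the standard representation of scale functions (e.g. from \cite{kyprianou2014} or \cite{bertoin1996}) guaranteeing that $W_k^{(q)}$ is non-decreasing, hence has a non-negative a.e.\ derivative; for the latter, since $w_k^{(q)}(\cdot;d)$ is continuous everywhere (established in Lemma \ref{Smoothness}) and has a non-negative derivative off the finite set $\{b_1,\dots,b_k\}$, it is non-decreasing on all of $[d,\infty)$ — a continuous function with non-negative derivative outside a finite set is non-decreasing. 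This closes the induction and the proof. One could also remark that the same argument, with $z_{k-1}^{(q)\prime}\ge 0$ (which follows since $z_0^{(q)}=Z^{(q)}$ is non-decreasing), shows $z_k^{(q)}$ is increasing, should that be wanted.
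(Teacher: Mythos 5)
Your proof is correct in spirit but takes a different, slightly more delicate route than the paper's. The paper argues directly on differences: for $x>y$ it writes
\begin{align*}
w_k^{(q)}(x;d)-w_k^{(q)}(y;d)
&=\bigl(w_{k-1}^{(q)}(x;d)-w_{k-1}^{(q)}(y;d)\bigr)
+\delta_k\int_{b_k}^y\bigl(W_k^{(q)}(x-z)-W_k^{(q)}(y-z)\bigr)w_{k-1}^{(q)\prime}(z;d)\,dz\\
&\quad+\delta_k\int_y^x W_k^{(q)}(x-z)\,w_{k-1}^{(q)\prime}(z;d)\,dz,
\end{align*}
and observes each of the three pieces is non-negative (the first strictly positive by the inductive hypothesis, the others by $\delta_k>0$, $W_k^{(q)}\geq 0$ increasing, and $w_{k-1}^{(q)\prime}\geq 0$). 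This produces strict monotonicity in one stroke, using only the recursion \eqref{eq:mr.recursion} and no pointwise derivative identities. Your derivative-based argument, by contrast, invokes the formula for $w_k^{(q)\prime}$ from Lemma~\ref{Smoothness} and then appeals to the principle ``continuous with non-negative derivative off a finite set implies non-decreasing.'' That principle is fine, but your claim that the exceptional set is exactly $\{b_1,\ldots,b_k\}$ in the bounded-variation case holds only under the extra hypothesis $W_k^{(q)}(\cdot-d)\in C^1(d,\infty)$ stated in Lemma~\ref{Smoothness}; in the fully general bounded-variation case the lemma gives only a.e.~differentiability, and you would then need to invoke (local) absolute continuity of $w_k^{(q)}$ to conclude, a point you do not address. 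Also, your passage from non-decreasing to increasing is a bit informal; the clean way is precisely the difference decomposition above, which you gesture at in your ``alternatively'' sentence. So the paper's argument is both more elementary and more robust, while your approach is a reasonable alternative once the absolute-continuity point is made explicit.
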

\begin{proof}
Because we have for $k=0$ that $w_k^{(q)}(\cdot;d)=W^{(q)}(\cdot-d)$, the monotonicity follows directly from the definition. 
  
Now, assume for induction that $w_{k-1}^{(q)}$ is an increasing function for $x\geq d$.
Then, again from \eqref{eq:mr.recursion}, for any $x>y$ we obtain that
\begin{align*}
&w_k^{(q)}(x;d)-w_k^{(q)}(y;d)\\&=w_{k-1}^{(q)}(x;d)+\delta_k\int_{b_k}^x W_k^{(q)}(x-z)w_{k-1}^{(q)\prime}(z;d)dz
-w_{k-1}^{(q)}(y;d)-\delta_k\int_{b_k}^{y} W_k^{(q)}(y-z)w_{k-1}^{(q)\prime}(z;d)dz\\&
=w_{k-1}^{(q)}(x;d)-w_{k-1}^{(q)}(y;d)+\delta_k\int_{b_k}^y\left( W_k^{(q)}(x-z)-W_k^{(q)}(y-z)\right)w_{k-1}^{(q)\prime}(z;d)dz\\
&+\delta_k\int_{y}^x W_k^{(q)}(x-z)w_{k-1}^{(q)\prime}(z;d)dz >0.
\end{align*}
The result follows by the induction assumption and the facts that $\delta_k>0$ and $W_k^{(q)}$ is a non-negative, increasing function in its domain. 
\end{proof}
  
	As a consequence of the previous lemma, we have the following result.
		\begin{corol} \label{derivative}
			For any $d \in \mathbb{R}$ and $k\geq 0$, $w_{k}^{(q)\prime}(x;d)> 0$ for a.e. $x > d$.
		\end{corol}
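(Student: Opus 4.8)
The plan is to deduce Corollary~\ref{derivative} directly from the Monotonicity Lemma together with the Smoothness Lemma, so that the only work is to upgrade "non-decreasing" to "strictly positive derivative almost everywhere." First I would recall that by the Monotonicity Lemma, $w_k^{(q)}(\cdot;d)$ is strictly increasing on $[d,\infty)$, and by the Smoothness Lemma it is absolutely continuous — indeed a.e.\ continuously differentiable — on $(d,\infty)$; hence $w_k^{(q)\prime}(x;d)\ge 0$ for a.e.\ $x>d$, with $w_k^{(q)}(x;d)=w_k^{(q)}(d;d)+\int_d^x w_k^{(q)\prime}(s;d)\,\md s$. The claim to be proved is that the set $N:=\{x>d : w_k^{(q)\prime}(x;d)=0\}$ has Lebesgue measure zero.

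The cleanest route is induction on $k$, mirroring the proofs of the two preceding lemmas. For $k=0$ we have $w_0^{(q)}(x;d)=W^{(q)}(x-d)$, and it is classical (e.g.\ from \cite{bertoin1996}, or the standard theory of scale functions) that $W^{(q)\prime}>0$ a.e.\ on $(0,\infty)$: a spectrally negative L\'evy process that is not the negative of a subordinator has a strictly increasing scale function whose derivative, where it exists, is strictly positive, since the Laplace transform identity \eqref{laplace_W_k} forces $W^{(q)}$ to be strictly increasing and its derivative cannot vanish on a set of positive measure without contradicting strict monotonicity of $W^{(q)}$ itself. For the inductive step, assume $w_{k-1}^{(q)\prime}(z;d)>0$ for a.e.\ $z>d$. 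Using the formula for $w_k^{(q)\prime}$ displayed in the proof of the Smoothness Lemma: for $x<b_k$ we have $w_k^{(q)\prime}(x;d)=w_{k-1}^{(q)\prime}(x;d)$, which is positive a.e.\ by the inductive hypothesis; and for $x>b_k$ we have
\begin{align*}
w_k^{(q)\prime}(x;d)=w_{k-1}^{(q)\prime}(x;d)\bigl(1+\delta_k W_k^{(q)}(0)\bigr)+\delta_k\int_{b_k}^x W_k^{(q)\prime}(x-z)\,w_{k-1}^{(q)\prime}(z;d)\,\md z,
\end{align*}
which is a sum of two a.e.-nonnegative terms; since $1+\delta_k W_k^{(q)}(0)\ge 1>0$ and $w_{k-1}^{(q)\prime}(x;d)>0$ for a.e.\ $x$, the first term alone is already strictly positive a.e.\ on $(b_k,\infty)$. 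Combining the two ranges, and noting $\{b_k\}$ is a null set, gives $w_k^{(q)\prime}(x;d)>0$ for a.e.\ $x>d$, completing the induction.

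Alternatively, and even more economically, one can avoid the induction entirely: since $w_k^{(q)}(\cdot;d)$ is absolutely continuous on $(d,\infty)$ and strictly increasing there, if $w_k^{(q)\prime}$ vanished on a set $A\subseteq(d,\infty)$ of positive Lebesgue measure, then for any $d<x_1<x_2$ with $[x_1,x_2]$ containing a positive-measure subset of $A$ we would nonetheless need $w_k^{(q)}(x_2;d)-w_k^{(q)}(x_1;d)=\int_{x_1}^{x_2}w_k^{(q)\prime}(s;d)\,\md s>0$, which is consistent only if $w_k^{(q)\prime}>0$ on a positive-measure subset of every such interval — this does not by itself exclude $A$ from having positive measure, so the induction is genuinely the safer argument, and I would present that. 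The main (and really the only) obstacle is the base case: one must be sure that the classical scale function $W^{(q)}$ of $X$ has a strictly positive derivative a.e., which is where the standing assumption that $X$ is not the negative of a subordinator is used; everything after that is just propagating positivity through the nonnegative recursion. I would therefore write the proof as: (1) cite the base case $k=0$ from the L\'evy scale-function literature; (2) write the recursion for $w_k^{(q)\prime}$ from the Smoothness Lemma; (3) observe each summand is a.e.\ nonnegative and the leading summand $(1+\delta_k W_k^{(q)}(0))\,w_{k-1}^{(q)\prime}(x;d)$ is a.e.\ strictly positive by the inductive hypothesis; (4) conclude.
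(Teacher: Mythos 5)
Your proof is correct and, modulo the fact that the paper supplies no explicit argument, takes the route the paper must intend: induction on $k$ using the explicit recursion for $w_k^{(q)\prime}$ from the Smoothness Lemma, with base case $W^{(q)\prime}>0$ a.e. You are also right to discard the ``economical'' shortcut and to flag that the paper's framing of this as a mere consequence of the Monotonicity Lemma glosses over a real gap: a strictly increasing absolutely continuous function can have a derivative vanishing on a set of positive Lebesgue measure (take $f(x)=\int_0^x \ind_{A^c}(t)\,dt$ with $A$ a fat Cantor set), so strict increase plus absolute continuity alone do not yield the corollary. The inductive propagation of strict a.e.\ positivity through the nonnegative recursion is genuinely needed, and that is exactly what you do. The only additional remark worth making is on the base case: the strict a.e.\ positivity of $W^{(q)\prime}$ (in fact the everywhere-positivity of the one-sided derivatives on $(0,\infty)$, via the excursion-measure representation discussed around Lemma 2.3 of \cite{kuznetsovetal2012}) is indeed classical, and it is the one place where the standing assumption that $X$ is not the negative of a subordinator enters, as you correctly note.
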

		
	\begin{lemma}{\textbf{(Behavior at $0$)}} For any $d \in \mathbb{R}$, $b_1>d$ and $k \geq 1$,
	 $w_k^{(q)}(d;d)=W^{(q)}(0)$ and $z_k^{(q)}(0)=Z^{(q)}(0)=1$. In particular, $w_k^{(q)}(0)=W^{(q)}(0)$. 
	\end{lemma}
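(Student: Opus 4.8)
The plan is to verify the two claimed boundary values directly from the recursive definitions \eqref{eq:mr.recursion} and \eqref{eq:mr.recursion_z_q}, proceeding by induction on $k$, and exploiting our convention that $\int_a^b = 0$ whenever $b < a$. The base case is immediate: for $k=0$ we have $w_0^{(q)}(\cdot;d) = W^{(q)}(\cdot-d)$, so $w_0^{(q)}(d;d) = W^{(q)}(0)$, and $z_0^{(q)} = Z^{(q)}$, so $z_0^{(q)}(0) = Z^{(q)}(0) = 1$ by \eqref{eq:zqscale}.

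For the inductive step on $w_k^{(q)}$, fix $d \in \mathbb{R}$ and suppose $b_1 > d$, $k \geq 1$. Since $b_k \geq b_1 > d$, evaluating the recursion at $x = d$ gives $\int_{b_k}^{d} W_k^{(q)}(d-y)\, w_{k-1}^{(q)\prime}(y;d)\, \md y = 0$ by our convention (the lower limit exceeds the upper limit), so $w_k^{(q)}(d;d) = w_{k-1}^{(q)}(d;d)$. By the inductive hypothesis this equals $W^{(q)}(0)$. Taking $d=0$ gives the particular statement $w_k^{(q)}(0) = W^{(q)}(0)$. The same argument applies to $z_k^{(q)}$: evaluating \eqref{eq:mr.recursion_z_q} at $x=0$ and using $b_k \geq b_1 > 0$ (so the integral over $(b_k, 0)$ vanishes), we get $z_k^{(q)}(0) = z_{k-1}^{(q)}(0)$, which equals $Z^{(q)}(0) = 1$ by induction.

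There is essentially no obstacle here — the lemma is a bookkeeping consequence of the recursions together with the vanishing-integral convention. The only point requiring a word of care is that one must genuinely invoke the hypothesis $b_1 > d$ (equivalently $b_1 > 0$ in the $z$ and the $d=0$ cases): if some $b_j \leq d$ the integral term need not vanish and the conclusion would fail. I would state the proof in two short sentences, one for each chain of equalities, citing the convention $\int_a^b = 0$ for $b < a$ introduced just after \eqref{small w}, and noting the base cases $w_0^{(q)}(d;d) = W^{(q)}(0)$ and $z_0^{(q)} = Z^{(q)}$.
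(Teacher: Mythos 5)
Your proof is correct and matches the paper's argument: both rely on the observation that, since $b_k \geq b_1 > d$, the integral term in the recursion \eqref{eq:mr.recursion} (and likewise in \eqref{eq:mr.recursion_z_q}) vanishes at $x=d$ (respectively $x=0$) by the convention $\int_a^b = 0$ for $b<a$, reducing the claim to an induction that bottoms out at $W^{(q)}(0)$ and $Z^{(q)}(0)=1$. The only cosmetic difference is that the paper anchors the induction at $k=1$ via \eqref{small w} while you anchor it at $k=0$; this is immaterial.
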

	\begin{proof}
	From \eqref{small w}, it is easy to check that  $w_1^{(q)}(d;d)=W^{(q)}(0)$. Hence from \eqref{eq:mr.recursion} it follows that
	$w_k^{(q)}(d;d)=w_{k-1}^{(q)}(d;d)$, and by induction we obtain that $w_k^{(q)}(d;d)=W^{(q)}(0)$ for any $k \geq 1$.
	A similar argument yields that $z_k^{(q)}(0)=Z^{(q)}(0)=1$ for any $k \geq 1$.
	\end{proof}
	
By solving the recursion given in (\ref{eq:mr.recursion}) with the initial condition (\ref{small w}), we get the following result.

\begin{prop}
For $k\geq 1$, we have
\begin{eqnarray}\label{w_n_explicit}\nonumber
 \lefteqn{w_k^{(q)}(x;d)=W^{(q)}(x-d) + \sum_{i=1}^k \delta_i \prod_{0<l<i} \left(1+\delta_l W_l^{\left(q\right)}(0)\right) 
 \int^{x}_{b_{i}}W_i^{\left(q\right)}(x-y)W^{\left(q\right)'}(y-d)\md y} \\\nonumber &  +  &
\sum_{1\leq i_1 <i_2 \leq k} \delta_{i_1} \delta_{i_2} \prod_{0<l\neq i_1<i_2} \left(1+\delta_l  W_l^{\left(q\right)}(0)\right) 
 \int^{x}_{b_{i_2}}W_{i_2}^{\left(q\right)}(x-y_1) \times \\\nonumber  & & \times \quad
\int^{y_1}_{b_{i_1}}W_{i_1}^{\left(q\right)'}(y_1-y_2)W^{\left(q\right)'}(y_2-d)\,\md y_2\, \md y_1
\\
\nonumber &  + &
\sum_{1\leq i_1<i_2<i_3 \leq k} \delta_{i_1} \delta_{i_2} \delta_{i_3} \prod_{\substack{0<l\neq i_2<i_3 \\ l\neq i_1}} 
\left(1+\delta_l  W_l^{\left(q\right)}(0)\right)  \int^{x}_{b_{i_3}}W_{i_3}^{\left(q\right)}(x-y_1)\int^{y_1}_{b_{i_2}}
W_{i_2}^{\left(q\right)'}(y_1-y_2) \times \\\nonumber   & & \times \quad \int^{y_2}_{b_{i_1}} W_{i_1}^{\left(q\right)'}(y_2-y_3)
W^{\left(q\right)'}(y_3-d)\md y_3 \md y_2 \md y_1 \\\nonumber & + &\\\nonumber &   \vdots \\\nonumber &  + &
\prod_{i=1}^k \delta_i   \int^{x}_{b_{k}}W_k^{\left(q\right)}(x-y_1)\int^{y_1}_{b_{k-1}}W_{k-1}^{\left(q\right)'}(y_1-y_2)
  \int^{y_2}_{b_{k-2}} W_{k-2}^{\left(q\right)'}(y_2-y_3) \times \ldots \\   & & \ldots \times \quad 
	\int^{y_{k-1}}_{b_{1}}W_1^{\left(q\right)'}(y_{k-1}-y_{k})W^{\left(q\right)'}(y_k-d)\,\md y_k\ldots\, \md y_1.
\end{eqnarray}
\end{prop}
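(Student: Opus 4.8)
The plan is to unfold the two recursions at our disposal: the defining recursion \eqref{eq:mr.recursion} for $w_k^{(q)}(\cdot;d)$, and the recursion for its derivative obtained in the proof of Lemma \ref{Smoothness}. Iterating \eqref{eq:mr.recursion} down to $w_0^{(q)}(x;d)=W^{(q)}(x-d)$ gives
\[
w_k^{(q)}(x;d)=W^{(q)}(x-d)+\sum_{i=1}^{k}\delta_i\int_{b_i}^{x}W_i^{(q)}(x-y_1)\,w_{i-1}^{(q)\prime}(y_1;d)\,\md y_1,
\]
so that the problem reduces to an explicit expression for the derivatives $w_j^{(q)\prime}(\cdot;d)$.

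From the proof of Lemma \ref{Smoothness}, for a.e.\ $y$ with $y>b_j$ one has $w_j^{(q)\prime}(y;d)=\bigl(1+\delta_j W_j^{(q)}(0)\bigr)w_{j-1}^{(q)\prime}(y;d)+\delta_j\int_{b_j}^{y}W_j^{(q)\prime}(y-u)w_{j-1}^{(q)\prime}(u;d)\,\md u$, with $w_0^{(q)\prime}(y;d)=W^{(q)\prime}(y-d)$. The next step is to iterate this identity from level $j$ down to level $0$. Here one should first observe that all arguments produced in the iteration stay above the relevant barrier (each nested integral over $(b_l,\cdot)$ feeds a level-$l'$ relation with $l'<l$, and $b_1<\dots<b_k$), so at every level the displayed recursion applies, never the simpler relation $w_j^{(q)\prime}=w_{j-1}^{(q)\prime}$ valid below $b_j$. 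At level $l\in\{1,\dots,j\}$ one then chooses either the scalar multiplication by $1+\delta_l W_l^{(q)}(0)$ or the operation $f\mapsto\delta_l\int_{b_l}^{(\cdot)}W_l^{(q)\prime}((\cdot)-u)f(u)\,\md u$. Since the factors $1+\delta_l W_l^{(q)}(0)$ are constants, they commute past every integral and collect into a single product out front; choosing the integral option at indices $i_1<\dots<i_r$ (a possibly empty subset of $\{1,\dots,j\}$) and the scalar option elsewhere therefore produces the term
\[
\Bigl(\prod_{\substack{0<l\le j\\ l\notin\{i_1,\dots,i_r\}}}\bigl(1+\delta_l W_l^{(q)}(0)\bigr)\Bigr)\delta_{i_1}\cdots\delta_{i_r}\int_{b_{i_r}}^{(\cdot)}W_{i_r}^{(q)\prime}((\cdot)-u_1)\int_{b_{i_{r-1}}}^{u_1}\cdots\int_{b_{i_1}}^{u_{r-1}}W_{i_1}^{(q)\prime}(u_{r-1}-u_r)W^{(q)\prime}(u_r-d)\,\md u_r\cdots\md u_1 .
\]
Applying this with $j=i-1$ and inserting it into the first display, with $i$ renamed as the largest index $i_m:=i$ of the chain, the resulting summand is exactly a generic term of \eqref{w_n_explicit}: the outermost, undifferentiated kernel $W_{i_m}^{(q)}$ coming from \eqref{eq:mr.recursion}, the nested kernels $W_{i_{m-1}}^{(q)\prime},\dots,W_{i_1}^{(q)\prime}$ and finally $W^{(q)\prime}(\cdot-d)$, with the product of $(1+\delta_l W_l^{(q)}(0))$ running over $\{1,\dots,i_m-1\}\setminus\{i_1,\dots,i_{m-1}\}$, which is precisely the index set written in \eqref{w_n_explicit}.

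Finally I would regroup the sum according to the chain length $m$ and the choice of $1\le i_1<\dots<i_m\le k$: the chains of length $m=1$ give the single-integral terms, those of length $m=2$ the double integrals, and the full chain $m=k$ gives the last line $\prod_{i=1}^{k}\delta_i(\dots)$, reproducing \eqref{w_n_explicit} line by line. I would in practice organise this as an induction on $k$, so that the combinatorial regrouping reduces to checking that the claimed formula satisfies $w_k^{(q)}(x;d)=w_{k-1}^{(q)}(x;d)+\delta_k\int_{b_k}^{x}W_k^{(q)}(x-y)w_{k-1}^{(q)\prime}(y;d)\,\md y$ once both sides are written out. The only genuine difficulty is the bookkeeping — tracking the multi-index products and the order in which the kernels nest — together with the two elementary observations that the constants $1+\delta_l W_l^{(q)}(0)$ factor out of every nested integral and that the convention $\int_a^b=0$ for $b<a$ makes the identity valid also when some $b_{i_j}\ge x$; no analytic input beyond Lemma \ref{Smoothness} is needed.
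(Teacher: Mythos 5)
The paper offers no proof of this proposition beyond the remark that it follows ``by solving the recursion \eqref{eq:mr.recursion} with the initial condition \eqref{small w}''; your unfolding --- writing $w_j^{(q)\prime}=\bigl[(1+\delta_j W_j^{(q)}(0))I+\delta_j K_j\bigr]w_{j-1}^{(q)\prime}$, expanding the product of these two-term operators, and observing both that every argument of $w_l^{(q)\prime}$ produced along the way stays above $b_l$ (so the two-term form of the derivative recursion is always the one that applies) and that the constants $1+\delta_l W_l^{(q)}(0)$ commute out of the nested integrals --- supplies exactly the calculation the authors have in mind. The proof is correct.
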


Here, we present some useful lemmas, which will be important for the proofs given in Appendices
\ref{BVcase} and \ref{Generalcase}.
\begin{lemma}\label{w_k_above_b_k_lemma}
For any  $q\geq 0$, $x\in\mathbb{R}$, $b_0=-\infty$, and $b_k = \max_{1 \leq j \leq k} b_j <y$ for $k \geq 0$,
\begin{equation}\label{w_k_above_b_k}
w_k^{(q)}(x;y)=\Xi_{\phi_k}(y) W_k^{(q)}(x-y). 
\end{equation}
\end{lemma}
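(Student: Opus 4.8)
The plan is to induct on $k$, combining the defining recursion (\ref{eq:mr.recursion}) with a Laplace-transform identity relating the two consecutive L\'evy scale functions $W_{k-1}^{(q)}$ and $W_{k}^{(q)}$. The base case $k=0$ is immediate: $w_0^{(q)}(x;y)=W^{(q)}(x-y)=W_0^{(q)}(x-y)$ and $\phi_0\equiv 0$, so $\Xi_{\phi_0}\equiv 1$ and there is nothing to prove (with the convention $b_0=-\infty<y$).

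For the inductive step I would fix $y>b_k=\max_{1\le j\le k}b_j$. Since then also $y>b_{k-1}=\max_{1\le j\le k-1}b_j$, the inductive hypothesis applies and gives $w_{k-1}^{(q)}(z;y)=\Xi_{\phi_{k-1}}(y)\,W_{k-1}^{(q)}(z-y)$ for all $z$, hence $w_{k-1}^{(q)\prime}(z;y)=\Xi_{\phi_{k-1}}(y)\,W_{k-1}^{(q)\prime}(z-y)$ for a.e.\ $z$. Plugging this into (\ref{eq:mr.recursion}), performing the substitution $z\mapsto z-y$ in the integral, and using that the resulting lower limit $b_k-y$ is negative while $W_{k-1}^{(q)\prime}$ vanishes on $(-\infty,0)$, I would arrive at
\begin{equation*}
w_k^{(q)}(x;y)=\Xi_{\phi_{k-1}}(y)\left[\,W_{k-1}^{(q)}(x-y)+\delta_k\int_0^{x-y}W_k^{(q)}(x-y-z)\,W_{k-1}^{(q)\prime}(z)\,\md z\right]
\end{equation*}
(the case $x\le y$ being trivial, as both sides then vanish).

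The heart of the argument is then the purely L\'evy-process identity
\begin{equation*}
W_{k-1}^{(q)}(\xi)+\delta_k\int_0^{\xi}W_k^{(q)}(\xi-z)\,W_{k-1}^{(q)\prime}(z)\,\md z=\bigl(1-\delta_kW_{k-1}^{(q)}(0)\bigr)\,W_k^{(q)}(\xi),\qquad \xi\ge 0,
\end{equation*}
which I would prove by Laplace transforms: with $\mathcal L[W_j^{(q)}](\lambda)=(\psi_j(\lambda)-q)^{-1}$ for $\lambda>\varphi_j(q)$, the relation $\mathcal L[W_{k-1}^{(q)\prime}](\lambda)=\lambda\,\mathcal L[W_{k-1}^{(q)}](\lambda)-W_{k-1}^{(q)}(0)$, and $\psi_k(\lambda)=\psi_{k-1}(\lambda)-\delta_k\lambda$, multiplying through by $(\psi_k(\lambda)-q)(\psi_{k-1}(\lambda)-q)$ reduces everything to a one-line algebraic cancellation. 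Combining this with the previous display, and observing that since $y>b_k\ge b_{k-1}$ Definition \ref{Delta} yields $\Xi_{\phi_{k-1}}(y)\bigl(1-\delta_kW_{k-1}^{(q)}(0)\bigr)=\prod_{j=1}^{k}\bigl(1-\delta_jW_{j-1}^{(q)}(0)\bigr)=\Xi_{\phi_k}(y)$ (both sides being $1$ in the unbounded variation case), completes the induction.

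The step I expect to require the most care is the change of variables together with the reduction of the integration range to $[0,x-y]$: one must track the contribution of the atom of $W_{k-1}^{(q)}$ at $0$ in the bounded variation case consistently — it is precisely this atom that produces the term $-W_{k-1}^{(q)}(0)$ in $\mathcal L[W_{k-1}^{(q)\prime}]$, and hence the factor $1-\delta_kW_{k-1}^{(q)}(0)$ — and one should note that the hypothesis $y>\max_j b_j$, rather than $y$ arbitrary, is exactly what makes $b_k-y$ negative and legitimizes invoking the inductive hypothesis for $w_{k-1}^{(q)}(\cdot;y)$.
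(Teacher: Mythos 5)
Your proof is correct and follows essentially the same route as the paper: induct on $k$ via the recursion \eqref{eq:mr.recursion}, reduce to the convolution identity $W_{k-1}^{(q)}(\xi)+\delta_k\int_0^\xi W_k^{(q)}(\xi-z)W_{k-1}^{(q)\prime}(z)\,\md z=(1-\delta_kW_{k-1}^{(q)}(0))W_k^{(q)}(\xi)$, and then fold the factor $1-\delta_kW_{k-1}^{(q)}(0)$ into $\Xi_{\phi_{k-1}}(y)$ to get $\Xi_{\phi_k}(y)$. The only (cosmetic) difference is that the paper obtains that convolution identity by differentiating the cited integral relation \eqref{useful_inedity2} from Kyprianou--Loeffen, whereas you derive it directly by Laplace transforms; both arguments hinge on the boundary term $-W_{k-1}^{(q)}(0)$ coming from the jump of $W_{k-1}^{(q)}$ at $0$, exactly as you flag.
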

\begin{proof}


We prove the result by induction. The base case ($k=0$) is clear, because this holds with $\Xi_{\phi_0}(y) = 1$ and $w_0^{(q)}(x;y) = W_0^{(q)}(x-y)$.

Assume for induction that $w_{k-1}^{(q)}(x;y)=\Xi_{\phi_{k-1}}(y)W_{k-1}^{(q)}(x-y)$ for $b_{k-1} < y$. Then, by \eqref{eq:mr.recursion}, for $b_{k-1} < b_k < y$,
\begin{align}\label{w_k_induction}\nonumber
w_k^{(q)}(x;y)&=w_{k-1}^{(q)}(x;y)+\delta_k\int_{b_k}^xW_k^{(q)}(x-z)w_{k-1}^{(q)\prime}(z;y)dz\\\nonumber
&=w_{k-1}^{(q)}(x;y)+\Xi_{\phi_{k-1}}(y) \delta_k\int_{b_k-y}^{x-y}W_k^{(q)}(x-y-z)W_{k-1}^{(q)\prime}(z)dz\\
&=\Xi_{\phi_{k-1}}(y)\left(W_{k-1}^{(q)}(x-y)+\delta_k  \int_{0}^{x-y}W_k^{(q)}(x-z-y)W_{k-1}^{(q)\prime}(z)dz\right).
\end{align}
According to the proofs of Theorems 4 and 5 of \cite{kyprianouloeffen2010}, we obtain that  the scale functions $W_k^{(q)}$ and $W_{k-1}^{(q)}$ are related by the following formula: for $z\geq 0$,
\begin{align}\label{useful_inedity2}
\delta_k\int_0^z W_{k}^{(q)}(z-y)W_{k-1}^{(q)}(y)dy=\int_0^z W_k^{(q)}(y)dy-\int_0^z W_{k-1}^{(q)}(y)dy.
\end{align}
Differentiating the above formula in $z$ gives us 
\begin{align*}
\delta_k\int_0^z W_k^{(q)}(y)W_{k-1}^{(q)\prime}(z-y)dy+\delta_kW^{(q)}_k(z)W_{k-1}^{(q)}(0)=W_{k}^{(q)}(z)-W_{k-1}^{(q)}(z).
\end{align*}
The above equation together with (\ref{w_k_induction}) implies that, for $b_k<y$,
\[
w_k^{(q)}(x;y)=\Xi_{\phi_k}(y)W_k^{(q)}(x-y).
\]
\end{proof}

\begin{lemma} \label{lemma_convergence_assumptotics_w}
Suppose that $q > 0$. For $\delta > \varphi_k(q)$, we have that $w^{(q)}_k (x;d) e^{-\delta x} \xrightarrow{x \uparrow \infty} 0$ for $k \geq 0$. 
\end{lemma}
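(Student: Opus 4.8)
The plan is to argue by induction on $k$, the crucial preliminary observation being that the right inverses are ordered: since $\psi_j(\lambda)=\psi(\lambda)-(\delta_1+\cdots+\delta_j)\lambda$ is decreasing in $j$ for each fixed $\lambda>0$, the largest roots of $\psi_j(\cdot)=q$ satisfy $\varphi_{k-1}(q)\le\varphi_k(q)$, so any $\lambda>\varphi_k(q)$ automatically satisfies $\lambda>\varphi_{k-1}(q)$ as well. Instead of proving the decay directly, I will prove the (in fact equivalent) statement that for every $k\ge0$ the Laplace transform $\widehat w_k(\lambda):=\int_d^\infty \me^{-\lambda x}w_k^{(q)}(x;d)\,\md x$ is finite for all $\lambda>\varphi_k(q)$. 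This implies the lemma: from \eqref{w_n_explicit} the function $w_k^{(q)}(\cdot;d)$ is a positive combination of nonnegative functions, hence nonnegative, and by the Monotonicity Lemma it is non-decreasing on $[d,\infty)$; for such a function, finiteness of $\widehat w_k(\lambda)$ forces $\me^{-\lambda x}w_k^{(q)}(x;d)\to0$ (compare $\int_x^{2x}\me^{-\lambda t}w_k^{(q)}(t;d)\,\md t\ge \lambda^{-1}w_k^{(q)}(x;d)\me^{-\lambda x}(1-\me^{-\lambda x})$ and let $x\to\infty$). Given $\delta>\varphi_k(q)$, one then chooses $\lambda\in(\varphi_k(q),\delta)$ and concludes $w_k^{(q)}(x;d)\me^{-\delta x}\le w_k^{(q)}(x;d)\me^{-\lambda x}\to0$.

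The base case $k=0$ is immediate: $w_0^{(q)}(x;d)=W^{(q)}(x-d)$, so by \eqref{laplace_W_k}, $\widehat w_0(\lambda)=\me^{-\lambda d}/(\psi(\lambda)-q)<\infty$ for $\lambda>\varphi_0(q)$. For the inductive step, suppose the assertion (hence, by the previous paragraph, also the lemma itself) holds at level $k-1$, and fix $\lambda>\varphi_k(q)$. Taking Laplace transforms in the recursion \eqref{eq:mr.recursion}, using Tonelli's theorem — legitimate since $W_k^{(q)}\ge0$ and $w_{k-1}^{(q)\prime}(\cdot;d)\ge0$ a.e.\ by Corollary \ref{derivative} — together with $\int_0^\infty \me^{-\lambda u}W_k^{(q)}(u)\,\md u=(\psi_k(\lambda)-q)^{-1}$ (valid as $\lambda>\varphi_k(q)$, so in particular $\psi_k(\lambda)-q>0$), I obtain
\[
\widehat w_k(\lambda)=\widehat w_{k-1}(\lambda)+\frac{\delta_k}{\psi_k(\lambda)-q}\int_{b_k}^\infty \me^{-\lambda y}w_{k-1}^{(q)\prime}(y;d)\,\md y .
\]
Here $\widehat w_{k-1}(\lambda)<\infty$ by the induction hypothesis, since $\lambda>\varphi_k(q)\ge\varphi_{k-1}(q)$. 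For the remaining integral I integrate by parts on $[b_k,R]$, using the absolute continuity of $w_{k-1}^{(q)}(\cdot;d)$ (inherited from that of $W^{(q)}$ through \eqref{w_n_explicit}): the boundary term $\me^{-\lambda R}w_{k-1}^{(q)}(R;d)$ tends to $0$ as $R\to\infty$ by the lemma at level $k-1$, while $\lambda\int_{b_k}^\infty \me^{-\lambda y}w_{k-1}^{(q)}(y;d)\,\md y\le\lambda\,\widehat w_{k-1}(\lambda)<\infty$; hence $\int_{b_k}^\infty \me^{-\lambda y}w_{k-1}^{(q)\prime}(y;d)\,\md y<\infty$, and therefore $\widehat w_k(\lambda)<\infty$. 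This closes the induction.

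The one genuinely delicate point is the control of $\int_{b_k}^\infty \me^{-\lambda y}w_{k-1}^{(q)\prime}(y;d)\,\md y$: a priori $w_{k-1}^{(q)\prime}$ is only known to be defined a.e.\ and nonnegative, so no pointwise bound is available, and this is exactly why the induction is organized so that the \emph{full} decay statement for $w_{k-1}^{(q)}$ — not merely finiteness of its Laplace transform — is at hand to kill the boundary term in the integration by parts (and why one needs $w_{k-1}^{(q)}(\cdot;d)$ absolutely continuous for the fundamental theorem of calculus to apply). Everything else is a routine application of Tonelli's theorem and the defining Laplace transform \eqref{laplace_W_k} of the scale functions.
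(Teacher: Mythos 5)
Your proof is correct. The route is, however, genuinely different from the paper's. The paper works pointwise: in the inductive step it invokes the standard bound $W_k^{(q)}(z)\le e^{\varphi_k(q)z}/\psi_k'(\varphi_k(q))$ on the scale function to factor out $e^{-(\delta-\varphi_k(q))x}$ directly from the convolution term, and then simply asserts — citing the inductive hypothesis and Corollary~\ref{derivative} — that the remaining factor $\int_{b_k}^x e^{-\varphi_k(q)y}w_{k-1}^{(q)\prime}(y;d)\,dy$ converges. You instead promote the statement to finiteness of the Laplace transform $\int_d^\infty e^{-\lambda x}w_k^{(q)}(x;d)\,dx$ for $\lambda>\varphi_k(q)$, deduce the pointwise decay from this via the nonnegativity and monotonicity of $w_k^{(q)}(\cdot;d)$, and close the induction by Laplace-transforming the recursion \eqref{eq:mr.recursion} with Tonelli, which uses only the defining relation \eqref{laplace_W_k} and avoids the exponential bound on $W_k^{(q)}$ altogether. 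The trade-off: the paper's version is shorter but tacitly relies on exactly the integration-by-parts step (boundary term killed by the level-$(k-1)$ decay, surviving integral controlled because $\varphi_k(q)>\varphi_{k-1}(q)$ when $q>0$) that you make explicit; your version requires the small extra lemma relating finiteness of the transform to pointwise decay for monotone functions, but in return it is self-contained and records, as a useful by-product, that $\int_d^\infty e^{-\lambda x}w_k^{(q)}(x;d)\,dx<\infty$ for all $\lambda>\varphi_k(q)$.
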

\begin{proof}
We prove the result by induction.
The base case with $k=0$ is clear, as we know that $W^{(q)}(x) e^{-\Phi(q)x}$ converges to a finite value as $x\to\infty$. Now, suppose that the claim holds for 
$k-1$ (i.e., $w^{(q)}_{k-1} (x;d) e^{-\delta x} \xrightarrow{x \uparrow \infty} 0$ for $\delta > \varphi_{k-1}(q)$).
Then we will show that it also holds for $k$. We have
\begin{align*}
 e^{-\delta x} \wq_k(x;d)&= e^{-\delta x} \wq_{k-1}(x;d)+\delta_{k} e^{-\delta x} \int^{x}_{b_{k}}W_k^{\left(q\right)}(x-y)w_{k-1}^{(q)\prime}(y;d)\md y \\
&\leq  e^{-\delta x} \wq_{k-1}(x;d)+\delta_{k} e^{-\delta x} \psi_k'(\varphi_k(q))^{-1}\int^{x}_{b_{k}} e^{\varphi_k(q)(x-y)}w_{k-1}^{(q)\prime}(y;d)\md y \\
&=  e^{-\delta x} \wq_{k-1}(x;d)+\delta_{k} e^{-(\delta - \varphi_k(q)) x} \psi_k'(\varphi_k(q))^{-1}\int^{x}_{b_{k}} e^{-\varphi_k(q)y}w_{k-1}^{(q)\prime}(y;d)\md y.
   \end{align*}
   Because $q > 0$ implies $\varphi_k (q) > \varphi_{k-1}(q)$, the inductive hypothesis and  Corollary \ref{derivative} imply that
	$\int^{x}_{b_{k}} e^{-\varphi_k(q)y}w_{k-1}^{(q)\prime}(y;d)\;\md y$ converges to a finite value as 
	$x \rightarrow \infty$. Therefore, $e^{-\delta x} \wq_k(x;d)$ vanishes in the limit as $x \rightarrow \infty$.  
	By induction, the proof is complete. 
	
\end{proof}

\begin{remark}\label{remark_convergence_assumptotics_w}
  It is easy to check that similar results as those in Lemmas \ref{Smoothness} and \ref{lemma_convergence_assumptotics_w}
  hold for the functions $z^{(q)}$ and $u^{(q)}$.
\end{remark}




\subsection{Integral equations for the multi-refracted scale functions.}\label{ss.integral}
In this section, we will show that the scale functions $w_k^{(q)}$, $z_k^{(q)}$ and $u_k^{(q)}$  are the solutions to some integral equations.
This representation will be important for defining the scale functions associated with the level-dependent L\'evy
process that we study in the next section. 
\begin{prop}\label{Volterra_type}
  Suppose that $\phi_k(x)=\sum_{i=1}^k\delta_i\ind_{\{x>b_i\}}$ and $k,q\ge0$.
  Then, for $x,d\in\mathbb{R}$, $d < b_1$ the function $w_k^{(q)}(x;d)$ is a unique solution to the following equation: 
\begin{align}\label{eq:volterra.wk}
w_k^{(q)}(x;d)&=W^{(q)}(x-d)+\int_d^xW^{(q)}(x-y)\phi_k(y)w_k^{(q)\prime}(y;d)\;dy
\end{align}
\end{prop}

\begin{proof}
In the proof, we use the partial Laplace transform
\begin{align*}
\mathcal{L}_{b}[f(x;d)](\lambda)=\int_b^\infty e^{-\lambda x}f(x;d)\,dx,\qquad\text{$b\geq0$,}
\end{align*}
and the following property that we obtain using Fubini's theorem:
$$\mathcal{L}_{b}\left[\int_b^xg(x-y)f(y;d)\,dy\right](\lambda)=
\mathcal{L}_{b}[f(x;d)](\lambda) \mathcal{L}_0 [g(x)](\lambda).$$
Moreover, integration by parts gives us that
$$\mathcal{L}_{b}[f^{\prime}(x;d)](\lambda)=\lambda \mathcal{L}_{b}[f(x;d)](\lambda)- e^{-\lambda b} f(b;d).$$
  Using the principle of induction, we will demonstrate that for $k=0,1,2,\ldots$ it holds that
  \begin{align}
w_k^{(q)}(x;d)=W^{(q)}(x-d)+\sum_{i=1}^k\delta_i \int_{b_i}^xW^{(q)}(x-y)w_k^{(q)\prime}(y;d)dy, \label{w_k_using_delta}
\end{align}
which is equivalent to \eqref{eq:volterra.wk}. 

First, the identity \eqref{w_k_using_delta} for the base case ($k=0$) holds because $w_0^{(q)}(x;d)=W^{(q)}(x-d)$, and this is the unique solution.  

 Next, under the inductive hypothesis that $w_{k-1}^{(q)}(x;d)$ is a unique solution to (\ref{w_k_using_delta}) with $k$ replaced with $k-1$, we show that $w_k^{(q)}(x;d)$ must be a unique solution to (\ref{w_k_using_delta}). To this end, given a function $u$ solving
\begin{align}\label{5}
u(x;d)=W^{(q)}(x-d)+\sum_{i=1}^k\delta_i\int_{b_i}^xW^{(q)}(x-y)u^{\prime}(y;d)\;dy,
\end{align}
we show it necessarily holds that  $u(x;d) = w_k^{(q)}(x;d)$. The proof is given separately for $x \leq b_k$ and $x > b_k$. 

Suppose that $x \leq b_k$. Then, \eqref{5} reduces to 
\begin{align*}
u(x;d)=W^{(q)}(x-d)+\sum_{i=1}^{k-1}\delta_i\int_{b_i}^xW^{(q)}(x-y)u^{\prime}(y;d)\;dy,
\end{align*}
which is precisely $w_{k-1}^{(q)}(x;d)$ by the inductive hypothesis. Because $w_{k-1}^{(q)}(x;d)$ and $w_{k}^{(q)}(x;d)$ coincide for $x \leq b_k$ 
(in view of \eqref{eq:mr.recursion}), we must have that  $u(x;d) = w_k^{(q)}(x;d)$. 

To prove the identity for $x > b_k$, we show that $\mathcal{L}_{b_k}[u(x;d)](\lambda)=\mathcal{L}_{b_k}[w_{k}^{(q)}(x;d)](\lambda)$  for sufficiently large $\lambda$.



We have
\begin{align*}
u(x;d)&=W^{(q)}(x-d)+\sum_{i=1}^k\delta_i\int_{b_i}^xW^{(q)}(x-y)u^{\prime}(y;d)\;dy\notag\\
&=W^{(q)}(x-d)+\sum_{i=1}^{k-1}\delta_i\int_{b_i}^{b_k}W^{(q)}(x-y)u^{\prime}(y;d)\;dy\\
&+\sum_{i=1}^k\delta_i\int_{b_k}^xW^{(q)}(x-y)u^{\prime}(y;d)\;dy\notag\\
&=W^{(q)}(x-d)+\sum_{i=1}^{k-1}\delta_i\int_{b_i}^{b_k}W^{(q)}(x-y)w_{k-1}^{(q)\prime}(y;d)\;dy\\
&+\sum_{i=1}^k\delta_i\int_{b_k}^xW^{(q)}(x-y)u^{\prime}(y;d)\;dy\notag\\
&=w_{k-1}^{(q)}(x;d)+\sum_{i=1}^k\delta_i\int_{b_k}^xW^{(q)}(x-y)u^{\prime}(y;d)\;dy\\
&-\sum_{i=1}^{k-1}\delta_i\int_{b_k}^xW^{(q)}(x-y)w_{k-1}^{(q)\prime}(y;d)\;dy.\notag
\end{align*}
Applying the operator $\mathcal{L}_{b_k}$ to both sides of the previous equation, we obtain that
\begin{align*}
\mathcal{L}_{b_k}[u(x;d)](\lambda)&=\mathcal{L}_{b_k}[w_{k-1}^{(q)}(x;d)](\lambda)+
\frac{\sum_{i=1}^k\delta_i}{\psi(\lambda)-q}\left(\lambda\mathcal{L}_{b_k}[u(x;d)](\lambda)-e^{-\lambda b_k}u(b_k;d)\right)\notag\\
&-\frac{\sum_{i=1}^{k-1}\delta_i}{\psi(\lambda)-q}\left(\lambda\mathcal{L}_{b_k}[w_{k-1}^{(q)}(x;d)](\lambda)-e^{-\lambda b_k}w_{k-1}^{(q)}(b_k;d)\right)\notag\\
&=\frac{\psi_{k-1}(\lambda)-q}{\psi(\lambda)-q}\mathcal{L}_{b_k}[w_{k-1}^{(q)}(x;d)](\lambda)+\frac{\sum_{i=1}^k\delta_i}{\psi(\lambda)-q}
\lambda\mathcal{L}_{b_k}[u(x;d)](\lambda)\notag\\
&-\frac{\delta_k}{\psi(\lambda)-q}e^{-\lambda b_k}w_{k-1}^{(q)}(b_k;d).
\end{align*}
This implies that
\begin{align}\label{4_a}
\mathcal{L}_{b_k}[u(x;d)](\lambda)=\frac{\psi_{k-1}(\lambda)-q}{\psi_k(\lambda)-q}\mathcal{L}_{b_k}[w_{k-1}^{(q)}(x;d)](\lambda)-
\frac{\delta_k}{\psi_{k-1}(\lambda)-q}e^{-\lambda b_{k}}w_{k-1}^{(q)}(b_k;d).
\end{align}
 Applying the operator $\mathcal{L}_{b_k}$ on both sides of \eqref{eq:mr.recursion} gives
\begin{align}\label{4}
\mathcal{L}_{b_k}[w_k^{(q)}(x;d)](\lambda)&=\mathcal{L}_{b_k}[w_{k-1}^{(q)}(x;d)](\lambda)+\frac{\delta_k}{\psi_k(\lambda)-q}
\mathcal{L}_{b_k}[w_{k-1}^{(q)\prime}(x;d)](\lambda)\notag\\
&=\mathcal{L}_{b_k}[w_{k-1}^{(q)}(x;d)](\lambda)\notag\\
&+\frac{\delta_k}{\psi_k(\lambda)-q}\left(\lambda\mathcal{L}_{b_k}[w_{k-1}^{(q)}(x;d)](\lambda)-e^{-\lambda b_{k}}w_{k-1}^{(q)}(b_k;d)\right)\notag\\
&=\frac{\psi_{k-1}(\lambda)-q}{\psi_k(\lambda)-q}\mathcal{L}_{b_k}[w_{k-1}^{(q)}(x;d)](\lambda)-\frac{\delta_k}{\psi_k(\lambda)-q}e^{-\lambda b_{k}}w_{k-1}^{(q)}(b_k;d).
\end{align}
Hence, by \eqref{4_a} and \eqref{4} we obtain that $\mathcal{L}_{b_k}[u(x;d)](\lambda)=\mathcal{L}_{b_k}[w_{k}^{(q)}(x;d)](\lambda)$ for sufficiently large $\lambda$, which in turn implies that $u$ is the unique solution of
	\eqref{5} and that $u(x;d)=w_{k}^{(q)}(x;d)$ for $x\geq0$.
	\end{proof}
	In a similar manner as in the proof of Proposition \ref{Volterra_type}, we can show that the scale functions $z_k^{(q)}$  and $u_k^{(q)}$
can be characterised as the solution to the following integral equations, respectively:
	\begin{align*}
	z_k^{(q)}(x)&
	=Z^{(q)}(x)+\sum_{i=1}^k\delta_i\int_{b_i}^xW^{(q)}(x-y)z_k^{(q)\prime}(y)dy,\notag
	\end{align*}
	\begin{align*}
u_k^{(q)}(x)&
	=e^{\Phi(q) x}+\sum_{i=1}^k\delta_i\int_{b_i}^xW^{(q)}(x-y)u_k^{(q)\prime}(y)dy.\notag
	\end{align*}

\section{Theory for a general premium rate function }\label{general premium}
In this section, we extend the theory of multi-refracted processes to the solutions of \eqref{sde1} with 
a general premium rate function {$\phi$}. We prove the existence and uniqueness of these solutions using the theory
already developed for the multi-refracted case. To this end, we approximate a general rate function $\phi$ 
by a sequence of rate functions $(\phi_n)_{n\geq0}$ of the form given in \eqref{rate_function-mult}. This will define a sequence of multi-refracted L\'evy processes, which will allow us to prove the existence and uniqueness 
of solutions to \eqref{sde1} by a limiting argument. 

Because most of the results presented in this section are based
on the previous results for multi-refracted processes, we adopt the following assumptions:
\begin{enumerate}
	\item[\textbf{[A]}]
The function $\phi$ is non-decreasing and for fixed $d' \in \mathbb{R}$, $\phi(x)=0$ for $x \leq d'$.   
Furthermore $\phi$ is either locally Lipschitz continuous or $\phi$ is of the form given in~\eqref{rate_function-mult}. In the bounded variation case, we assume that $\phi(x)<c$ for all $x\in\mathbb{R}$.
\end{enumerate}

%

Now, consider $U$, which is the solution of \eqref{sde2}. Between jumps, the evolution of $U$ is deterministic, according to $\dot{x}=p(x)$ with $x(0)=x_0$. 
To solve this equation, we introduce
$$\Omega_x(y)=\int_x^y\frac{1}{p(v)}\,dv.$$
Notice that $\frac{d}{dt}\Omega_{x_0}(x_t)=\dot{x}_t(p(x_t))^{-1}=1$, and so $\Omega_{x_0}(x_t)=t$. Hence, $x_t=\Omega_{x_0}^{-1}(t)$.
A problem appears if  $\Omega_0(y)<\infty$ for all $0<y\le \infty$. In this case, $\Omega^{-1}_x(t)$ is defined for all $x\ge0$, and $0<t\le \Omega_x(\infty)<\infty$.
In the paper by Albrecher  et al. \cite{Albetal}, the authors considered the case in which the process $U$ is driven by a compound Poisson process and a 
rate function $p(x)=C+x^2$. 
In view of the previous remark, we can see that the process $U$ cannot be constructed for all $t\ge0$.

\subsection{Existence, uniqueness, and monotonicity}
In this section we will prove the existence and uniqueness of a solution to \eqref{sde1} with a rate function $\phi$ that satisfies Assumption \textbf{[A]}. 
As we already mentioned, we will use an approximating argument, in which a monotonicity property of the solutions based on their driving rate functions 
will be crucial. 

We  consider a sequence of functions $(\phi_n)_{n\geq1}$ that satisfy the following conditions:  
  \begin{enumerate}
   \item[(a)] $\lim_{n\to\infty}\phi_n=\phi$ uniformly on compact sets.
   \item[(b)] For $x\in\mathbb{R}$,
     \begin{eqnarray*}
             \phi_1(x)\leq \phi_2(x)\leq \ldots\leq \phi(x).
     \end{eqnarray*}
   \item[(c)] For each $n\geq 1$ and $x\in\mathbb{R}$, we have that $\phi_n(x)=\sum_{j=1}^{m_n}\delta^n_j\ind_{\{x>b_j^n\}}$, for some $m_n \in \mathbb{N}$, 
	$ 0<b_1^n<\ldots<b_{m_n}^n$, and $\delta_j^n>0$ for each $j=1,\dots,m_n$.
     \end{enumerate}
	For each $n\geq1$ we denote the solution of \eqref{sde1} with the rate function $\phi_n$ by $U_n$. 
		\begin{remark}
		According to the convention from Section \ref{sec:mult.refr}, $\phi_n$ is an $m_n$-multi-refracted rate function.
Notice that from now on $\phi_n$ is not exactly the rate function from Section \ref{sec:mult.refr}, because now we are not indexing the sequence $(\phi_n)_{n\geq1}$ only by 
the number of barriers. We decided to use the above notation for clarity of Section \ref{general premium}.
		\end{remark}
		We now show how to construct a specific sequence $(\phi_n)_{n\geq1}$ that satisfies the conditions mentioned above. For each $n\geq 1$ 
		we choose a grid $\Pi^n=\{b^n_l=l2^{-n}:l=1,\dots,m_n=n2^{n}\}$ and set
     $\delta_j^n=\phi(b^n_j)-\phi(b^n_{j-1})$, with $b_0^n=0$. Furthermore we define the {\it approximating sequence} of the rate function $\phi$ as follows:
     \[
     \phi_n(x)=\sum_{j=1}^{m_n}\delta_j^n\ind_{\{x>b_j^n\}}\qquad\text{for $n\geq 1$ and $x\in\mathbb{R}$.}
     \]
     For any $n\geq 1$, we have from Lemma \ref{lem:mr.exist} that there exists 
     a unique solution $U_n$ to \eqref{sde1} with the rate function $\phi_n$.
		Moreover, the following lemma implies that the sequence $(U_n(t))_{n\geq1}$ is non-decreasing for any $t\geq0$.

\begin{lemma}\label{lem:mon}
	Suppose that for each $n\geq 1$, $\phi_n(x)\le \phi_{n+1}(x)$ for all $x\in\mathbb{R}$.
	Then $U_{n+1}(t)\le U_n(t)$ for all $t\ge 0$.
\end{lemma}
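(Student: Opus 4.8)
The plan is to argue pathwise and reduce the statement to an elementary comparison of two absolutely continuous real functions. Fix $\omega$ and work with the corresponding fixed path of $X$; let $U_n$ and $U_{n+1}$ denote the (unique strong) solutions of \eqref{sde1} with rate functions $\phi_n$ and $\phi_{n+1}$ driven by this path, both started from the same initial value. Set $D(t):=U_n(t)-U_{n+1}(t)$. Subtracting the integrated forms of the two SDEs, the $\md X$ contributions cancel and we obtain
\begin{align*}
D(t)=\int_0^t\big(\phi_{n+1}(U_{n+1}(s))-\phi_n(U_n(s))\big)\,\md s .
\end{align*}
Since $\phi_n$ and $\phi_{n+1}$ are bounded (finite sums of indicators) and $s\mapsto U_n(s),\,U_{n+1}(s)$ are right-continuous, hence Borel measurable and locally bounded, the integrand is bounded and measurable. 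Thus $D$ is Lipschitz, with $D(0)=0$ and $D'(t)=\phi_{n+1}(U_{n+1}(t))-\phi_n(U_n(t))$ for Lebesgue-a.e.\ $t$.

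The crucial step is a sign estimate on the relatively open set $G:=\{t\ge 0:D(t)<0\}$, i.e.\ the set where $U_n(t)<U_{n+1}(t)$. For a.e.\ $t\in G$,
\begin{align*}
D'(t)=\big(\phi_{n+1}(U_{n+1}(t))-\phi_{n+1}(U_n(t))\big)+\big(\phi_{n+1}(U_n(t))-\phi_n(U_n(t))\big),
\end{align*}
where the first bracket is $\ge 0$ because $\phi_{n+1}$ is non-decreasing and $U_n(t)<U_{n+1}(t)$, and the second bracket is $\ge 0$ by the hypothesis $\phi_n\le\phi_{n+1}$. Hence $D'(t)\ge 0$ for a.e.\ $t\in G$.

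To conclude that $D\ge 0$ on $[0,\infty)$, suppose instead that $D(t_1)<0$ for some $t_1>0$, and set $t_0:=\sup\{s\in[0,t_1]:D(s)\ge 0\}$. This set is nonempty (it contains $0$), closed by continuity of $D$, and does not contain $t_1$; therefore $0\le t_0<t_1$ and $D(t_0)=0$ (if $D(t_0)>0$, continuity of $D$ would give $D>0$ on a right-neighbourhood of $t_0$ inside $[0,t_1]$, contradicting the supremum), while $D(s)<0$ for every $s\in(t_0,t_1]$. Then $(t_0,t_1)\subseteq G$, so by the fundamental theorem of calculus for absolutely continuous functions together with the sign estimate above,
\begin{align*}
D(t_1)=D(t_0)+\int_{t_0}^{t_1}D'(s)\,\md s\ge 0,
\end{align*}
contradicting $D(t_1)<0$. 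Hence $D(t)\ge 0$, i.e.\ $U_{n+1}(t)\le U_n(t)$, for every $t\ge 0$, and this holds for every $\omega$.

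The only genuine work is in justifying the regularity of $D$ (absolute continuity, together with the stated a.e.\ derivative, which is what allows the fundamental theorem of calculus to be applied) and in organising the signs at the first down-crossing time; everything else is immediate from the monotonicity of $\phi_{n+1}$ and the ordering $\phi_n\le\phi_{n+1}$. I would emphasise that no Lipschitz property of the rate functions themselves is needed, which is precisely why the pathwise first-crossing argument is preferable here to a Gronwall-type estimate.
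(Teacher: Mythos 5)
Your proof is correct, and it is a genuinely different — and arguably tighter — route than the paper's. The paper first perturbs the rate to $\phi_{n+1}+\varepsilon$ so that at the first crossing time $\varsigma$ the derivative of $U_n-U^{\varepsilon}_{n+1}$ equals $\phi_{n+1}(u)-\phi_n(u)+\varepsilon>0$ at the common value $u=U_n(\varsigma)$; this strict sign yields $U^{\varepsilon}_{n+1}\le U_n$ and $U^{\varepsilon}_{n+1}\le U_{n+1}$, after which $U_{n+1}\le U_n$ is recovered by estimating $(U_{n+1}-U^{\varepsilon}_{n+1})^2$ and sending $\varepsilon\downarrow 0$, using the monotonicity of $\phi_{n+1}$ to discard a term of the right sign. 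Your argument dispenses with both the auxiliary process and the limit passage: the split of $D'$ into the two brackets $\phi_{n+1}(U_{n+1})-\phi_{n+1}(U_n)$ and $\phi_{n+1}(U_n)-\phi_n(U_n)$ gives $D'\ge 0$ a.e.\ on the entire set $\{D<0\}$, rather than strict positivity at a single point, and the fundamental theorem of calculus for the Lipschitz function $D$ closes the argument in one stroke. Notably, the monotonicity of $\phi_{n+1}$, which the paper only uses in its $\varepsilon\downarrow 0$ step, is exactly what makes your direct version work; and replacing a pointwise derivative estimate at $\varsigma$ (where $D'$ need not exist, and where the paper's claim that $U_n-U^{\varepsilon}_{n+1}$ is non-increasing on $[\varsigma-\epsilon,\varsigma)$ is not literally forced) by an a.e.\ integral estimate on $\{D<0\}$ is also more robust.
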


\begin{proof}
Consider $\varepsilon>0$, and define the function $\phi_{n+1}^{\varepsilon}(x):=\phi_{n+1}(x)+\varepsilon$. Then, $\phi_n(x)<\phi_{n+1}^{\varepsilon}(x)$ 
	for all $x\in\mathbb{R}$. Consider the process $U^{\varepsilon}_{n+1}$, which is a solution to the following SDE:
		\[
		U^{\varepsilon}_{n+1}(t)=X(t)-\int_0^t\phi_{n+1}^{\varepsilon}(U^{\varepsilon}_{n+1}(s))\;ds,\qquad\text{$t\geq0$.}
		\]
				Moreover, define
		\[
		\varsigma{:=}\inf\{t>0:U_n(t)<U^{\varepsilon}_{n+1}(t)\},
		\]
		and assume that $\varsigma<\infty$. We remark that because $U_n$ and $U^{\varepsilon}_{n+1}$ have the same jumps, the crossing cannot occur 
		at a jump instant. Hence, $U_n(t)-U^{\varepsilon}_{n+1}(t)$ is non-increasing in some $[\varsigma-\epsilon, \varsigma)$  for small enough $\epsilon$, and
		\begin{align}\label{d1}
		0\geq\frac{d}{dt}\left(U_n(t)-U^{\varepsilon}_{n+1}(t)\right)\Bigg|_{t=\varsigma-}=\phi_{n+1}^\varepsilon(U^{\varepsilon}_{n+1}(\varsigma))-\phi_n(U_n(\varsigma)) >0,
		\end{align}
		which yields that 
		$\varsigma=\infty$, implying that $U^{\varepsilon}_{n+1}\leq U_n$. 
		
		Now, because $\phi_{n+1}(x)<\phi_{n+1}^{\varepsilon}(x)$ 
	for all $x\in\mathbb{R}$, using the same argument as above we obtain that $U^{\varepsilon}_{n+1}\leq U_{n+1}$.
		
\smallskip		
On the other hand, let
$$\Delta_{\varepsilon}(t):=U_{n+1}(t)-U^{\varepsilon}_{n+1}(t)=\int_0^t \Big( \phi_{n+1}^{\varepsilon}(U^{\varepsilon}_{n+1}(s))-\phi_{n+1}(U_{n+1}(s)) \Big) \;ds.$$
Then, from classical calculus 
\begin{eqnarray*}
\lefteqn{(\Delta_{\varepsilon}(t))^2=2\int_0^t  \Delta_{\varepsilon}(s)( \phi_{n+1}^{\varepsilon}(U^{\varepsilon}_{n+1}(s))-\phi_{n+1}(U_{n+1}(s)))\;ds}\\
&&= 2\int_0^t \Delta_{\varepsilon}(s)( \phi_{n+1}(U^{\varepsilon}_{n+1}(s))-\phi_{n+1}(U_{n+1}(s)))\;ds+ 2\varepsilon \int_0^t \Delta_{\varepsilon}(s) \;ds .
\end{eqnarray*}

Since we have that $U^{\varepsilon}_{n+1}(t)\leq U_{n+1}(t)$ for all $t\geq 0$ and $\phi_{n+1}$ is a non-decreasing function, we obtain that
$$\int_0^t \Delta_{\varepsilon}(s)( \phi_{n+1}(U^{\varepsilon}_{n+1}(s))-\phi_{n+1}(U_{n+1}(s)))\;ds \leq 0,$$ and finally 
	 $$(\Delta_{\varepsilon}(t))^2 \leq 2\varepsilon \int_0^t \Delta_{\varepsilon}(s) \;ds \stackrel{\varepsilon \downarrow  0}{\longrightarrow} 0. $$
	From this we conclude that 
	\[
		U_{n+1}(t)=\lim_{\varepsilon\downarrow0}U^{\varepsilon}_{n+1}(t)\leq U_n(t).
		\]
\end{proof}

The following result establishes the existence and uniqueness of a solution to the SDE \eqref{sde1} with a general premium rate function {$\phi$}, which satisfies condition \textbf{[A]}. 
In the literature, there exist general results regarding the existence of solutions of SDEs driven by L\'evy processes. See, for example, the book of Applebaum \cite{applebaum2004}. In particular, 
we wish to quote Brockwell et al.\ \cite{brockwelletal82} and references therein, where the existence and uniqueness 
of a solution to \eqref{sde1} were considered for storage processes in the case that the driving L\'evy process is of bounded variation.  
However, in this paper we will proceed in a natural manner, by demonstrating the uniform convergence on compact sets of a sequence of 
$m_n$-multi-refracted L\'{e}vy processes $(U_n)_{n\geq0}$, 
defined by the approximating sequence of rate functions $(\phi_n)_{n\geq0}$ to the solution $U$ of \eqref{sde1} with the corresponding rate function $\phi$.

For the proof of the following proposition we introduce the following notation, for any $t>0$,
\[
\overline{X}(t):=\sup_{0\leq s\leq t}X(s)\quad\text{and }\underline{X}(t):=\inf_{0\leq s\leq t}X(s).
\]

In the following proposition we assume that $\phi$ is continuously differentiable, because its statement for $\phi=\phi_k$ was already proven. 
\begin{prop}\label{conv_processes} Suppose that the locally Lipschitz continuous rate function  $\phi$ satisfies condition \textbf{[A]}. 
Then, there exists a unique solution $U$ to the SDE \eqref{sde1} with rate function $\phi$. 
Furthermore, the sequence $(U_n)_{n\geq 1}$ converges uniformly to $U$ a.s.\ on compact time intervals.
  \end{prop}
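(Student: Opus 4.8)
The plan is to use the monotonicity established in Lemma \ref{lem:mon}: since $\phi_n \le \phi_{n+1}$ along the approximating sequence (by construction, as the grid refines and the increments $\delta_j^n$ are telescoping differences of $\phi$), the sequence $(U_n(t))_{n\ge 1}$ is non-increasing in $n$ for every fixed $t \ge 0$ and every fixed path of $X$. Moreover each $U_n$ is uniformly bounded below on $[0,T]$: from the SDE $U_n(t) = X(t) - \int_0^t \phi_n(U_n(s))\,\md s \ge X(t) - \int_0^t \phi(U_n(s))\,\md s$ — actually one needs a genuine a priori lower bound. Here I would argue that on the event where $U_n$ has dropped below a large negative level $-M$, the rate $\phi_n$ vanishes (since $\phi_n(x)=0$ for $x\le 0$), so $U_n$ cannot decrease faster than $X$ itself; more precisely $U_n(t) \ge \underline{X}(t)$ if $\phi_n \ge 0$ is used together with the fact that $\phi_n$ is zero below $0$, giving $U_n(t) \ge \underline X(t) \wedge (\text{something finite})$. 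In any case $\inf_n U_n(t) \ge \underline X(t) > -\infty$ a.s., so the monotone limit $U(t):= \lim_{n\to\infty} U_n(t) = \inf_n U_n(t)$ exists and is finite, pointwise in $t$ and $\omega$.

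Next I would upgrade pointwise convergence to uniform convergence on $[0,T]$. Fix $T>0$ and a path of $X$. For $m > n$, write
\begin{align*}
U_n(t) - U_m(t) = \int_0^t \big( \phi_m(U_m(s)) - \phi_n(U_n(s)) \big)\,\md s = \int_0^t \big( \phi_m(U_m(s)) - \phi_m(U_n(s)) \big)\,\md s + \int_0^t \big(\phi_m(U_n(s)) - \phi_n(U_n(s))\big)\,\md s.
\end{align*}
The first integral is $\le 0$ because $\phi_m$ is non-decreasing and $U_m \le U_n$; the second is bounded by $t \cdot \sup_{K} |\phi_m - \phi_n|$ where $K = [\underline X(T)\wedge 0, \overline X(T)\vee 0]$ is a compact interval containing the ranges of all $U_n$ on $[0,T]$. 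By condition (a), $\sup_K|\phi_m - \phi_n| \to 0$ as $n,m\to\infty$ (the $\phi_n$ converge uniformly on $K$, hence form a uniform Cauchy sequence there). Since $0 \le U_n(t) - U_m(t)$ as well (by monotonicity $U_m \le U_n$), we get $0 \le U_n(t) - U_m(t) \le T\,\sup_K |\phi_m - \phi_n|$ uniformly in $t \in [0,T]$, so $(U_n)$ is uniformly Cauchy on $[0,T]$ and converges uniformly to $U$.

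It remains to check that the limit $U$ solves \eqref{sde1} and that the solution is unique. For the SDE: uniform convergence of $U_n$ to $U$ on $[0,T]$, together with uniform convergence of $\phi_n$ to $\phi$ on the compact $K$ and continuity of $\phi$, gives $\phi_n(U_n(s)) \to \phi(U(s))$ uniformly on $[0,T]$, so $\int_0^t \phi_n(U_n(s))\,\md s \to \int_0^t \phi(U(s))\,\md s$; passing to the limit in $U_n(t) = X(t) - \int_0^t \phi_n(U_n(s))\,\md s$ shows $U$ satisfies \eqref{sde1}. For uniqueness: if $U$ and $\widetilde U$ are two solutions with the same driving $X$, then $\Delta(t) := U(t) - \widetilde U(t)$ is absolutely continuous with $\Delta(0)=0$ and $\Delta'(t) = -(\phi(U(t)) - \phi(\widetilde U(t)))$, so $\frac{\md}{\md t}\Delta(t)^2 = -2\Delta(t)(\phi(U(t))-\phi(\widetilde U(t))) \le 0$ since $\phi$ is non-decreasing; hence $\Delta(t)^2 \le \Delta(0)^2 = 0$, giving $U \equiv \widetilde U$. (This is the same device used in the proof of Lemma \ref{lem:mon}.) The main obstacle I anticipate is the a priori lower bound needed to guarantee the monotone limit is finite and that all $U_n$ live in a common compact set on $[0,T]$ — one must use that $\phi_n$ vanishes on $(-\infty,0]$ and that in the bounded variation case $\phi_n < c$, so the downward drift contributed by $-\phi_n$ never exceeds that of $X$ in a way that would drive $U_n$ to $-\infty$; handling the unbounded-variation case cleanly (where $W^{(q)}(0)=0$ and paths are more delicate) is where the estimate has to be done with some care.
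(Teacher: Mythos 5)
Your overall architecture matches the paper's (approximate $\phi$ by the monotone sequence $\phi_n$, invoke Lemma~\ref{lem:mon} to get $U_n$ non-increasing in $n$, take the monotone limit, pass to the limit in the SDE), and two of your technical steps are actually cleaner than what the paper does: the uniform-Cauchy estimate $0 \le U_n(t)-U_m(t) \le T\sup_K|\phi_m-\phi_n|$, obtained by splitting $\phi_m(U_m)-\phi_n(U_n) = \big(\phi_m(U_m)-\phi_m(U_n)\big)+\big(\phi_m(U_n)-\phi_n(U_n)\big)$ and using monotonicity of $\phi_m$ to kill the first term, avoids Gronwall entirely; and your uniqueness argument via $\tfrac{\md}{\md t}\Delta(t)^2 = -2\Delta(t)\big(\phi(U(t))-\phi(\widetilde U(t))\big)\le 0$ uses only monotonicity of $\phi$, not the Lipschitz bound $\sup_I|\phi'|$ that the paper's Gronwall argument requires. (The paper does use exactly this $\Delta^2$ device, but only inside the proof of Lemma~\ref{lem:mon}, and then reverts to Gronwall here.)

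There is, however, a genuine error in your a priori lower bound. The claim $U_n(t)\ge \underline X(t)$ is false: the fact that $\phi_n$ vanishes on $(-\infty,0]$ gives no protection while $U_n$ is still positive, and on that stretch $U_n$ can accumulate a downward drift $\int_0^t\phi_n(U_n(s))\,\md s$ that pushes it strictly below the running infimum of $X$. (Concretely, if $U_n$ stays in the refraction region on $[0,t]$, then $U_n(t)\approx x+X(t)-\big(\sum_i\delta_i\big)t$, which can lie well under $\underline X(t)$.) The correct bound, and the one the paper uses, goes through the \emph{upper} bound first: since $\phi_n\ge 0$, $U_n(s)\le x+\big(X(s)-X(0)\big)\le x+\overline X(T)-\underline X(T)$ on $[0,T]$, hence $\phi_n(U_n(s))\le\phi\big(x+\overline X(T)-\underline X(T)\big)$ because $\phi_n\le\phi$ and $\phi$ is non-decreasing, and therefore
\[
U_n(t)\;\ge\; x+\underline X(T)-\underline X(0)-\phi\big(x+\overline X(T)-\underline X(T)\big)\,T ,\qquad 0\le t\le T ,
\]
which is finite a.s. This gives the common compact $K=[-K_T,K_T]$ on which all $U_n(\cdot)$ live on $[0,T]$; once you have $K$, the rest of your proof (uniform Cauchy, passing to the limit, uniqueness) is sound. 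So the gap is real but localized and easily repaired by replacing your lower-bound heuristic with this two-step estimate.
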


\begin{proof}
We will first show the existence, by proving the uniform convergence of the sequence $(U_n)_{n\geq1}$ on compact sets to a solution of \eqref{sde1} with rate function $\phi$.
  To this end, let $(\phi_n)_{n\geq1}$ be the approximating sequence for  $\phi$. For each $n\geq1$, we consider $U_n$ 
as a unique solution to
  $$U_n(t)=X(t)-\int_0^t\phi_n(U_n(s))\,ds.$$
  Since $\phi_1(x)\le \phi_2(x)\le \ldots \le \phi(x)$, we have by Lemma \ref{lem:mon} that $U_1(t)\ge U_2(t)\ge \ldots $. 
	Now, fix an arbitrary $T>0$. It follows that
  \begin{align*}
    U_n(t)\leq X(t)\leq |\overline{X}(T)|,\qquad\text{ $0\le t\le T$}.
  \end{align*}
  On the other hand, using the fact that $\phi(x)\geq0$ for $x\in\mathbb{R}$, we have that
  \begin{align*}
  U_n(t)=X(t) -\int_0^t\phi_n(U_n(s))ds\geq X(t)-\phi(|\overline{X}(T)|)T,\qquad\text{$0\le t\le T$.}
  \end{align*}
  Hence, 
  \begin{align*}
  |U_n(t)|\le  (|\overline{X}(T)|\vee|\underline{X}(T)|)+\phi(|\overline{X}(T)|)T=:K_T,\qquad \text{$0\le t\le T$, $n\geq 1$}.
  \end{align*}
  Since the sequence  $n \mapsto U_n$ is non-increasing and bounded below, we can define $U(t)=\lim_{n\to\infty}U_n(t)$.
By the uniform convergence of $\phi_n$ to $\phi$ on compact sets
	and the pointwise  convergence of $U_n$ to $U$ we have $\phi_n(U_n(t))\to \phi(U(t))$ pointwise. By the bounded convergence theorem
$$U(t)=\lim_{n\to\infty} U_n(t)=X(t)-\lim_{n\to\infty}\int_0^t\phi_n(U_n(s))\,ds
  =X(t)-\int_0^t\phi(U(s))\,ds\qquad\text{$0\le t\le T$.}$$
  
Now for the uniform convergence of $U_n$ to $U$,
by the Mean Value Theorem and the assumption that $\phi$ is locally Lipschitz, we have for $0\le t\le T$
  \begin{eqnarray*}
    |U_n(t)-U(t)|&\le& \int_0^t|\phi(U(s))-\phi(U_n(s))|\,ds+\int_0^t|\phi(U_n(s))-\phi_n(U_n(s)) | \,ds\\
    &\le & T\sup_{s\in I}|\phi(s)-\phi_n(s)|+L_I\int_0^t|U(s)-U_n(s)|\,ds,
\end{eqnarray*}
where $I := [-K_T,K_T]$ and $L_I$ is the associated Lipschitz constant for $\phi$ over the interval $I$.
Hence by Gronwall's inequality we get $$\sup_{0\le t\le T}|U(t)-U_n(t)|\le T\sup_{s\in I}|\phi(s)-\phi_n(s)|e^{L_I  T}.$$ 
This shows the uniform convergence on compact time intervals.

Now, in order to show the uniqueness, consider two solutions of \eqref{sde1}, say $U$ and $\tilde{U}$. Then, for $0\le t\le T$,
\begin{align*}
|U(t)-\tilde{U}(t)|\leq \int_0^T|\phi(U(s))-\phi(\tilde{U}(s))|ds\leq L_I\int_0^T|U(s)-\tilde{U}(s)|ds.
\end{align*}
This implies that
\begin{align*}
\sup_{t\in[0,T]}|U(t)-\tilde{U}(t)|\leq L_I\int_0^T\sup_{t\in[0,s]}|U(t)-\tilde{U}(t)|ds.
\end{align*}
Then, Gronwall's Lemma implies that
\begin{align*}
\sup_{t\in[0,T]}|U(t)-\tilde{U}(t)|=0,
\end{align*}
and therefore there exists a unique solution to \eqref{sde1}.
\end{proof}

\subsection{Theory of scale functions for level-dependent L\'evy processes.}\label{ss:integral}

  In this section, we introduce the scale function $w^{(q)}$ for the level-dependent L\'evy process 
$U$ with rate function $\phi$, as a unique solution to some integral equation. Recall that we denote the scale function of the driving L\'evy process $X$ as $W^{(q)}$.
Here, for $x,d\in\mathbb{R}$ we define $w^{(q)}(x;d)$ as the solution to the following integral equation:
  \begin{equation}\label{eq:volterra.w}
    w^{(q)}(x;d)=W^{(q)}(x-d)+\int_d^xW^{(q)}(x-y)\phi(y){w^{(q)\prime}}(y;d)\,dy,
  \end{equation}
  provided that $ w^{(q)}$ is a.e. differentiable. For any $x\geq0$, we denote $w^{(q)}(x):=w^{(q)}(x;0)$.
	
	Now, for any $x\geq0$ define $\Xi_{\phi}(x):=1-W^{(q)}(0)\phi(x)$, which is strictly positive by Assumption \textbf{[A]}. 
  However, a more useful form for our subsequent analysis is obtained by differentiating equation \eqref{eq:volterra.w}. Thus, for $x > d$ , we obtain 
\begin{align}\label{eq:volterra.w.prime}\notag
  &w^{(q)\prime}(x;d)\\\notag&=\frac{1}{1-\phi(x)W^{(q)}(0)}{W^{(q)\prime}}((x-d)+)+\int_d^x \frac{\phi(y)}{1-\phi(x)W^{(q)}(0)} {W^{(q)\prime}}(x-y){w^{(q)\prime}}(y;d)dy\\
	&=\Xi_{\phi}(x)^{-1} W^{(q)\prime}((x-d)+)+\int_d^x \Xi_{\phi}(x)^{-1} \phi(y) W^{(q)\prime}(x-y){w^{(q)\prime}}(y;d)\;dy, 
\end{align} 
with the boundary condition $w^{(q)}(d;d) = W^{(q)}(0)$. Notice that in some cases we can have that $w^{(q)\prime}(d+;d) \geq \lim_{x\downarrow d}W^{(q)\prime}(x-d)=\infty$. 


\begin{remark}\label{derivatives}
Because in general the derivative of the function $W^{(q)}$ is not defined for all $x\in\mathbb{R}$, we take the right first derivatives of $W^{(q)}$, 
which always exist (see, for example, the proof of Lemma 2.3 in Kuznetsov et al. \cite{kuznetsovetal2012}).
We also understand $w^{(q)\prime}$ as the right hand derivative if it does not exist. Then, $w^{(q)\prime}$ is the solution of the Volterra equation \eqref{eq:volterra.w.prime},
 which together with the initial condition defines the scale function $w^{(q)}$ uniquely. This will be proved in Lemma \ref{lem:u.uprime} below.
\end{remark}

  \begin{lemma}\label{lem:u.uprime}
  Assume that a measurable function $u$ is differentiable a.e. Then, $u$ is the solution of \eqref{eq:volterra.w} if and only if $u^{\prime}$ is the solution of  \eqref{eq:volterra.w.prime}, with  the boundary condition $u(d;d)=W^{(q)}(0)$.
  \end{lemma}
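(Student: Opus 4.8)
The plan is to prove the two implications separately, treating this as the differentiation/integration equivalence between \eqref{eq:volterra.w} and \eqref{eq:volterra.w.prime}. Throughout, recall that in \eqref{eq:volterra.w.prime} the quantity $W^{(q)\prime}$ means the right-hand derivative of $W^{(q)}$, which exists everywhere by Remark \ref{derivatives}, and that $\Xi_\phi(x) = 1 - W^{(q)}(0)\phi(x) > 0$ by Assumption \textbf{[A]}, so that division by $\Xi_\phi(x)$ is legitimate.

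For the forward direction, suppose $u$ is a.e.\ differentiable and solves \eqref{eq:volterra.w}; in particular $u(d;d) = W^{(q)}(x-d)\big|_{x=d} = W^{(q)}(0)$ since the integral term vanishes at $x=d$. First I would rewrite the convolution integral $\int_d^x W^{(q)}(x-y)\phi(y)u^{\prime}(y;d)\,dy$ by the change of variable $z = x - y$, obtaining $\int_0^{x-d} W^{(q)}(z)\phi(x-z)u^{\prime}(x-z;d)\,dz$, and then differentiate in $x$. The differentiation of this convolution-type term produces the boundary contribution $W^{(q)}(0)\phi(x)u^{\prime}(x;d)$ (from the endpoint $z = 0$, i.e.\ $y = x$) plus the term $\int_d^x W^{(q)\prime}(x-y)\phi(y)u^{\prime}(y;d)\,dy$; the endpoint $z = x-d$ contributes $W^{(q)}(x-d)\phi(d)u^{\prime}(d;d) = 0$ because $\phi(d) = \phi(0) = 0$ (as $d < b_1$ and $\phi$ vanishes on $(-\infty,0]$, or directly $d \le 0$ by the convention that $b_1 > d$ together with the structure; more carefully one uses $\phi \equiv 0$ on $(-\infty, b_1)$). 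Differentiating the remaining term $W^{(q)}(x-d)$ gives $W^{(q)\prime}((x-d)+)$. Collecting terms yields
\begin{align*}
u^{\prime}(x;d) = W^{(q)\prime}((x-d)+) + W^{(q)}(0)\phi(x)u^{\prime}(x;d) + \int_d^x W^{(q)\prime}(x-y)\phi(y)u^{\prime}(y;d)\,dy,
\end{align*}
and solving for $u^{\prime}(x;d)$ by moving the second right-hand term to the left and dividing by $\Xi_\phi(x)$ gives exactly \eqref{eq:volterra.w.prime}.

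For the converse, suppose $u^{\prime}$ solves \eqref{eq:volterra.w.prime} with $u(d;d) = W^{(q)}(0)$; define $v(x;d) := W^{(q)}(x-d) + \int_d^x W^{(q)}(x-y)\phi(y)u^{\prime}(y;d)\,dy$ and show $v = u$. By the forward computation just carried out (applied to $v$ in place of the putative solution of \eqref{eq:volterra.w}), $v^{\prime}$ satisfies \eqref{eq:volterra.w.prime}, hence $u^{\prime}$ and $v^{\prime}$ both solve that Volterra equation; since $v(d;d) = W^{(q)}(0) = u(d;d)$ and the Volterra equation \eqref{eq:volterra.w.prime} has a unique solution (a standard fact for Volterra equations of the second kind with locally integrable kernel $\Xi_\phi(x)^{-1}\phi(y)W^{(q)\prime}(x-y)$, which is the content invoked implicitly in Remark \ref{derivatives}), we conclude $u^{\prime} = v^{\prime}$ a.e.; integrating from $d$ and matching the value at $d$ gives $u = v$, so $u$ solves \eqref{eq:volterra.w}.

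The main obstacle is the differentiation-under-the-integral-sign step in the forward direction: one must justify differentiating the convolution $\int_d^x W^{(q)}(x-y)\phi(y)u^{\prime}(y;d)\,dy$ despite $W^{(q)}$ being only a.e.\ differentiable (with a possible jump in the derivative at $0$ in the bounded variation case) and $u^{\prime}$ being only defined a.e. The clean way is to avoid pointwise differentiation entirely and instead verify the equivalence at the level of integral identities — i.e.\ show that \eqref{eq:volterra.w.prime} is exactly the relation obtained by integrating \eqref{eq:volterra.w} once, using Fubini to interchange the order of integration in the double integral $\int_d^x \int_d^s (\,\cdots) W^{(q)\prime}(s-y)\,dy\,ds$ and the identity $\int_0^{t} W^{(q)\prime}(r)\,dr = W^{(q)}(t) - W^{(q)}(0)$. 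This keeps all manipulations within the realm of absolutely continuous functions and locally integrable integrands, and the boundary term $W^{(q)}(0)\phi(x)$ appears precisely from the $W^{(q)}(0)$ in that last identity. I would present the argument in this integral form to sidestep any measure-theoretic delicacy.
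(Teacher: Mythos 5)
Your proposal is correct and follows the same differentiate-then-integrate strategy as the paper's (very terse) proof; the forward computation producing the boundary term $W^{(q)}(0)\phi(x)u'(x;d)$ and then dividing by $\Xi_\phi(x)$ is exactly what is meant by the paper's ``differentiation of \eqref{eq:volterra.w} yields \eqref{eq:volterra.w.prime}.'' Two small points of tidiness: first, after the change of variable $z=x-y$ there is no nonzero contribution at $y=d$ --- applying Leibniz directly to $\int_d^x W^{(q)}(x-y)\phi(y)u'(y;d)\,dy$ gives only the endpoint term at $y=x$ plus $\int_d^x W^{(q)\prime}(x-y)\phi(y)u'(y;d)\,dy$, so the appeal to $\phi(d)=0$ is unnecessary; second, in the converse you do not need to invoke uniqueness of the Volterra solution: once you compute $v'(x;d)=W^{(q)\prime}((x-d)+)+W^{(q)}(0)\phi(x)u'(x;d)+\int_d^x W^{(q)\prime}(x-y)\phi(y)u'(y;d)\,dy$, the hypothesis that $u'$ satisfies \eqref{eq:volterra.w.prime} (multiplied through by $\Xi_\phi(x)$) says this right-hand side equals $u'(x;d)$, so $v'=u'$ directly, and with $v(d;d)=u(d;d)=W^{(q)}(0)$ you get $v=u$. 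Your closing suggestion to carry out the whole argument at the level of Fubini and the identity $\int_0^t W^{(q)\prime}(r)\,dr=W^{(q)}(t)-W^{(q)}(0)$ is indeed the cleanest way to sidestep pointwise differentiability issues; the paper leaves this implicit.
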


\begin{proof}
Suppose that $u$ fulfills  \eqref{eq:volterra.w}. Then, differentiation of 
 \eqref{eq:volterra.w} yields  \eqref{eq:volterra.w.prime}. Conversely, suppose that $u^{\prime}$ fulfills
\eqref{eq:volterra.w.prime}. Then, defining $u(x;d)=W^{(q)}(0)+\int_d^x u^{\prime}(t;d)\,dt$, we can verify by inspection that
$u$ fulfills  \eqref{eq:volterra.w}.
\end{proof}

In a similar manner we define the scale function $z^{(q)}$ as the solution to
\begin{align}\label{eq:volterra.z.prime}\notag
  z^{(q)\prime}(x)&=\frac{q}{1-\phi(x)W^{(q)}(0)}W^{(q)}(x)+\int_0^x \frac{\phi(y)}{1-\phi(x)W^{(q)}(0)} {W^{(q)\prime}}(x-y)z^{(q)\prime}(y)dy\\
	&=\Xi_{\phi}(x)^{-1}qW^{(q)}(x)+ \int_0^x \Xi_{\phi}(x)^{-1} \phi(y) W^{(q)\prime}(x-y){z^{(q)\prime}}(y)dy 
\end{align}
with the boundary condition $z^{(q)}(0) = 1$.

 In the remainder of this section we will prove the existence and uniqueness of solutions to equations \eqref{eq:volterra.w.prime} 
and \eqref{eq:volterra.z.prime}, which belong to the family of Volterra equations. To this end, we now present an outline of the theory of such equations.
For $x\geq d$, a Volterra equation is given by
  \begin{equation}\label{eq:volt.eq}u(x;d)=g(x;d)+\int_d^x K(x,y)u(y;d)\,dy,\end{equation}
  where $K$, $g$, and  $u$ are measurable, $g$ is non-negative and the integrals are well defined. We set 
	\begin{equation}\label{function_g_W}
	g(x;d)=W^{(q)\prime}((x-d)+)/\Xi_{\phi}(x)  
	\end{equation}
	 and 
	\begin{equation}\label{function_g_Z}
	g(x)=q  W^{(q)}(x)/\Xi_{\phi}(x)
	\end{equation}
	to obtain ${w^{(q)\prime}}(x;d)$
and $z^{(q)\prime}(x)$, respectively.

The idea of how to solve Volterra {equations} \eqref{eq:volt.eq} is as follows.
For $T>d$, we {consider} a kernel $K:D=\{(x,y): d\le y < x \le T\}\to\reals_+ $
(that is $K(x,y)=0$ for $y>x$), which is assumed to be measurable, 
and define the following operator for a nonnegative measurable function $f$: 
$$K\diamond f(x;d)=\int_d^x K(x,y)f(y;d)\,dy,\qquad d\le x\le T.$$
Now, consider the following Pickard iteration: We set 
$u_1=g$  and
\begin{equation}\label{eq:pickard}
  u_{n+1}=g+K\diamond u_n\qquad \mbox{for $n\geq 1$}.\end{equation}
By induction, one can prove that
\begin{equation}\label{pickard}
u_{n+1}(x;d)=g(x;d)+\int_d^x\sum_{l=1}^nK^{(l)}(x,y)g(y;d)\,dy,\qquad\text{for $n\geq 1$, and $x\in[d,T]$,}
\end{equation}
where  $K^{(1)}=K$ and $K^{(l+1)}(x,y)=\int_y^xK^{(l)}(x,w)K(w,y)\,dw$.
 Notice that the sequence 
$\left(\sum_{l=1}^j K^{(l)}(x,y)\right)_{j\geq 1}$ is non-decreasing, and it is convergent if there exists a majorant function $\zeta(x,y)<\infty$ such that
\[\sum_{l=1}^j K^{(l)}(x,y)\le \zeta(x,y)\qquad\text{ for all $j\geq 1$ and $(x,y)\in D$}.
\]
 We now  pass with $n\to\infty$ in \eqref{pickard}. We can enter with the limit under the sum provided
\begin{equation}\label{zetaintegrable}\int_d^T\zeta(x,y)g(y;d)\,dy<\infty.
\end{equation}
Thus there exists a sequence $(u_n)_{n\geq1}$ converging to the function $u$ defined by
\begin{equation}\label{volterra}
u(x;d)=g(x;d)+\int_d^xK^{\dagger}(x,y)g(y;d)\,dy,\qquad\text{for $x\in[d,T]$}
\end{equation}
where
  $K^{\dagger}(x,y)=\sum_{l=1}^\infty K^{(l)}(x,y)$.
Now, passing in \eqref{eq:pickard} with $n\to\infty$ on both  sides
  (notice that $(u_n)$ is a non-decreasing sequence) we see that $u$ is a solution of \eqref{eq:volt.eq}.

Furthermore, suppose there exist two solutions $u_1$ and $u_2$ to \eqref{eq:volt.eq}. Then $u=u_1-u_2$
is a solution to $u(x)= \int_d^xK(x,y)u(y)\,dy$.
Now under the assumption that the kernel $K$ is positive and integrable, Gronwall's Lemma yields $u\equiv 0$, which shows the uniqueness.
We remark that in {the} theory of Volterra equations, the continuity of $g$ and $K$ is not required.
The presented theory is in the spirit of $L^1$ kernels as in Chapter 9.2 of
\cite{gripenberg}.

%

 For a given scale function $W^{(q)}$ and rate function $\phi$, we now consider equations
 \eqref{eq:volterra.w.prime} and  \eqref{eq:volterra.z.prime}, which are Volterra equations of the type \eqref{eq:volt.eq}.
For both equations,
the kernel is given by
\[
K(x,y)= \Xi_{\phi}(x)^{-1}\phi(y) W^{(q)\prime}((x-y)+).
\]
On the other hand for the function $g$ in \eqref{eq:volt.eq} we use \eqref{function_g_W} and \eqref{function_g_Z} 
to obtain the functions $w^{(q)}$ and $z^{(q)}$ respectively.

\bigskip

Before we determine a majorant $\zeta(x,y)$ for the Neumann series $K^{\dagger}$
we require two preparatory lemmas. The proof of the first is obvious. For the proof of the second, we refer the reader to Appendix \ref{AppD}.

\begin{lemma}\label{lem:prep1}
Assume that $f$ is a non-negative measurable function. Suppose that for
some $s_0{>0}$, we have that $\int_0^\infty e^{-s_0x}f(x)\,dx<\infty$. Then, $f$ is finite a.e.
\end{lemma}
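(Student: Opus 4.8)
The statement has two parts: (i) if $\int_0^\infty e^{-s_0 x} f(x)\,dx < \infty$ for some $s_0 > 0$, then $f$ is finite a.e.; and (ii) under the same hypothesis, $\int_0^\infty e^{-sx} f(x)\,dx < 1$ for all sufficiently large $s$. The plan is to handle each part by elementary measure-theoretic arguments.

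For part (i), I would argue by contradiction. If $f = +\infty$ on a set $A$ of positive Lebesgue measure, then there is a bounded subset $A' \subseteq A$ with $0 < |A'| < \infty$, and since $e^{-s_0 x}$ is bounded below by some $\epsilon > 0$ on $A'$, we get $\int_0^\infty e^{-s_0 x} f(x)\,dx \geq \int_{A'} e^{-s_0 x} f(x)\,dx = +\infty$, contradicting the hypothesis. Hence $f$ is finite a.e.

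For part (ii), write $I(s) := \int_0^\infty e^{-sx} f(x)\,dx$. For $s \geq s_0$ we have $e^{-sx} \leq e^{-s_0 x}$ for all $x \geq 0$, so $I(s) \leq I(s_0) < \infty$; in particular $I(s)$ is finite for every $s \geq s_0$. Moreover, for $s > s_0$,
\begin{equation*}
I(s) = \int_0^\infty e^{-(s-s_0)x} e^{-s_0 x} f(x)\,dx,
\end{equation*}
and as $s \to \infty$ the integrand $e^{-(s-s_0)x} e^{-s_0 x} f(x)$ decreases to $0$ for a.e.\ $x > 0$ (using that $f$ is finite a.e.\ from part (i)), while being dominated by the integrable function $e^{-s_0 x} f(x)$. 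By the dominated convergence theorem (or monotone convergence applied to the decreasing family), $I(s) \to 0$ as $s \to \infty$. Consequently there exists $s_1$ such that $I(s) < 1$ for all $s \geq s_1$, which is the claim.

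I do not anticipate any serious obstacle here; the only mild subtlety is making sure the finiteness of $f$ a.e.\ is used to justify pointwise convergence of the integrand to zero, but this is exactly what part (i) provides, so the two parts fit together cleanly. The whole argument is a routine application of dominated (or monotone) convergence.
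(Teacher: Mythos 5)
Your proof is correct; the paper itself calls this lemma ``obvious'' and supplies no proof, and your argument is exactly the routine one a reader would fill in: part (i) by a positive-measure contradiction, part (ii) by dominated (or monotone) convergence applied to $\int_0^\infty e^{-(s-s_0)x}\bigl(e^{-s_0x}f(x)\bigr)\,dx$, which tends to $0$ as $s\to\infty$ since $f$ is finite a.e.\ and $e^{-s_0x}f(x)$ is the integrable dominating function. Nothing is missing.
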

\begin{lemma}\label{lem:zeta.finite}
Let $a>0$. The function 
\begin{equation}\label{eq:zeta}
\zeta(x)=\sum_{l=1}^\infty a^l{{(W^{(q)\prime}}})^{*l}(x+)
\end{equation}
is finite for all $x>0$, where ${{(W^{(q)\prime}}})^{*l}$ is the $l$-th convolution power.
\end{lemma}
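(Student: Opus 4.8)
The plan is to bound the iterated convolutions $(W^{(q)\prime})^{*l}$ exponentially and then sum the resulting geometric-type series. First I would recall that $W^{(q)\prime}$ has a well-defined Laplace transform: since $\mathcal{L}[W^{(q)}](\lambda) = 1/(\psi(\lambda)-q)$ for $\lambda > \Phi(q)$, integration by parts gives $\mathcal{L}[W^{(q)\prime}](\lambda) = \lambda/(\psi(\lambda)-q) - W^{(q)}(0)$, which is finite for all $\lambda$ large enough; denote such a value $s_0$ and note $\mathcal{L}[W^{(q)\prime}](s_0) =: M < \infty$. By Lemma~\ref{lem:prep1} we may in fact enlarge $s_0$ so that $\int_0^\infty e^{-s_0 x} W^{(q)\prime}(x)\,dx < 1/a$, i.e. $a\,\mathcal{L}[W^{(q)\prime}](s_0) < 1$.

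Next I would use the convolution theorem for Laplace transforms: $\mathcal{L}\big[(W^{(q)\prime})^{*l}\big](s_0) = \big(\mathcal{L}[W^{(q)\prime}](s_0)\big)^l$. Multiplying by $e^{-s_0 x}$ and summing over $l$,
\begin{align*}
\int_0^\infty e^{-s_0 x}\,\zeta(x)\,dx = \sum_{l=1}^\infty a^l \big(\mathcal{L}[W^{(q)\prime}](s_0)\big)^l = \frac{a\,\mathcal{L}[W^{(q)\prime}](s_0)}{1 - a\,\mathcal{L}[W^{(q)\prime}](s_0)} < \infty,
\end{align*}
where interchanging sum and integral is justified by Tonelli's theorem since all terms are non-negative. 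Having shown that $\zeta$ has a finite Laplace transform at $s_0$, one more application of Lemma~\ref{lem:prep1} yields that $\zeta$ is finite a.e. To upgrade ``a.e.~finite'' to ``finite for all $x>0$'' as the statement demands, I would invoke monotonicity: each $(W^{(q)\prime})^{*l}$ is non-decreasing in $x$ on $(0,\infty)$ when $W^{(q)\prime}$ is non-negative (the convolution of non-negative non-decreasing functions on the half-line is non-decreasing), hence so is $\zeta$; a non-decreasing function that is finite a.e.~is finite everywhere on $(0,\infty)$. The right-limit notation $(x+)$ in \eqref{eq:zeta} is harmless here since it only matters on a null set.

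The main obstacle, and the point requiring the most care, is the justification that $\mathcal{L}[W^{(q)\prime}]$ is genuinely finite and can be made small by taking $s$ large — in particular handling the $W^{(q)}(0)$ term and the bounded-variation case where $W^{(q)}(0) = 1/c \neq 0$, so that $\mathcal{L}[W^{(q)\prime}](\lambda) = \lambda/(\psi_0(\lambda)-q) - 1/c$. One checks that $\lambda/(\psi(\lambda)-q) \to W^{(q)}(0)$ as $\lambda \to \infty$ (since $\psi(\lambda)/\lambda \to c$ in the bounded-variation case and $\psi(\lambda)/\lambda^2 \to \sigma^2/2$ otherwise), so the difference tends to $0$; this is exactly the hypothesis $\int_0^\infty e^{-s_0 x} W^{(q)\prime}(x)\,dx < \infty$ needed to start the argument, and then Lemma~\ref{lem:prep1} does the rest. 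Everything else is a routine application of the convolution theorem and Tonelli.
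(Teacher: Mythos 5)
Your main line of argument is exactly the paper's: compute the Laplace transform of $\zeta$ by the convolution theorem to reduce to the geometric series $\sum_{l\geq 1}\bigl(a\,\mathcal{L}[W^{(q)\prime}](s)\bigr)^l$, show $\mathcal{L}[W^{(q)\prime}](s)\to 0$ as $s\to\infty$ via integration by parts together with the explicit Laplace transform of $W^{(q)}$, and then invoke Lemma~\ref{lem:prep1}. In fact your integration-by-parts identity $\mathcal{L}[W^{(q)\prime}](\lambda)=\lambda/(\psi(\lambda)-q)-W^{(q)}(0)$ is the correct one; the displayed formula in the paper's proof (with prefactors $1/s$ in front of both terms) contains an algebraic slip, although it does not affect the conclusion that the quantity vanishes as $s\to\infty$.

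Where you diverge is the final upgrade from ``finite a.e.''\ (which is all Lemma~\ref{lem:prep1} delivers, and all the paper's proof actually establishes) to ``finite for all $x>0$.'' Your argument there does not work as written. You assert that each $(W^{(q)\prime})^{*l}$ is non-decreasing ``because the convolution of non-negative non-decreasing functions on the half-line is non-decreasing,'' but the hypothesis of that fact is non-decreasingness, and $W^{(q)\prime}$ is only non-negative, not non-decreasing in general. A concrete counterexample: for Brownian motion with drift $\mu>0$ and $q=0$ one has $W(x)=\tfrac{1}{\mu}\bigl(1-e^{-2\mu x/\sigma^2}\bigr)$, so $W^{\prime}(x)=\tfrac{2}{\sigma^2}e^{-2\mu x/\sigma^2}$ is strictly decreasing; and more generally a convolution of a non-monotone non-negative function with itself (think $f(x)=e^{-x}$, with $f*f(x)=xe^{-x}$) need not be monotone. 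So your monotonicity step would require separate justification; as stated it is a gap. Note, however, that the paper's own proof stops at ``finite a.e.''\ and leaves the same gap, so your proof matches the paper's in substance, with one additional step that would need repair.
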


Because $\phi(x)<c = 1/W^{(q)}(0)$ for all $x >0$ in the bounded variation case, and $W^{(q)}(0)=0$ in the unbounded variation case, for any $T>0$ we have that
\begin{equation}\label{eq:def_a} a_T:=\sup_{d\le y\le {T}}\left|\Xi_{\phi}(T)^{-1}\phi(y)\right|<\infty.
\end{equation}
For clarity, we defer the proof of the following lemma and proposition to Appendix \ref{AppD}, because the arguments are of a technical nature.
\begin{lemma}\label{lem:i-iii}
For any $T>d$, we have 
\[
K^{\dagger}(x,y)\le \zeta(x-y),\qquad\text{for any $d \leq y\leq x\leq T$,}
\]
where
\[
\zeta(x)=\sum_{l=1}^\infty a_T^l{{(W^{(q)\prime}}})^{*l}(x+)<\infty,\qquad\text{$x > 0$.}
\]
\end{lemma}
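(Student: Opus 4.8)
The goal is to bound the Neumann (Picard) kernel $K^{*}(x,y)=\sum_{l\ge1}K^{(l)}(x,y)$ associated with the kernel $K(x,y)=\Xi_{\phi}(x)^{-1}\phi(y)W^{(q)\prime}((x-y)+)$ by the translation-invariant majorant $\zeta(x-y)$ defined via iterated convolutions of $W^{(q)\prime}$ and the constant $a_T$. The natural strategy is induction on $l$, showing that
\[
K^{(l)}(x,y)\le a_T^{\,l}\,(W^{(q)\prime})^{*l}\big((x-y)+\big)\qquad\text{for all }d\le y\le x\le T,
\]
and then summing over $l$; the finiteness of the resulting series is exactly the content of Lemma~\ref{lem:zeta.finite} applied with $a=a_T$, which is in turn finite because of \eqref{eq:def_a}.

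First I would set up the base case $l=1$: here $K(x,y)=\Xi_{\phi}(x)^{-1}\phi(y)W^{(q)\prime}((x-y)+)$, and since $0\le y\le x\le T$ the definition \eqref{eq:def_a} gives $\Xi_{\phi}(x)^{-1}\phi(y)\le a_T$ (using monotonicity of $\phi$ and hence of $\Xi_\phi^{-1}$ on $[d,T]$, and positivity of $W^{(q)\prime}$), so $K(x,y)\le a_T\,W^{(q)\prime}((x-y)+)$, which is the claim for $l=1$. For the inductive step, recall $K^{(l+1)}(x,y)=\int_y^x K^{(l)}(x,w)K(w,y)\,dw$. Using the inductive bound on $K^{(l)}(x,w)$ and the base bound on $K(w,y)$, and noting that all quantities appearing are nonnegative, I would estimate
\[
K^{(l+1)}(x,y)\le \int_y^x a_T^{\,l}(W^{(q)\prime})^{*l}\big((x-w)+\big)\,a_T\,W^{(q)\prime}\big((w-y)+\big)\,dw
= a_T^{\,l+1}\int_0^{x-y}(W^{(q)\prime})^{*l}(u+)\,W^{(q)\prime}\big((x-y-u)+\big)\,du,
\]
after the substitution $u=x-w$; the last integral is by definition $(W^{(q)\prime})^{*(l+1)}((x-y)+)$, completing the induction. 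Summing over $l\ge1$ gives $K^{*}(x,y)\le\sum_{l\ge1}a_T^{\,l}(W^{(q)\prime})^{*l}((x-y)+)=\zeta(x-y)$, and finiteness for $x-y>0$ follows from Lemma~\ref{lem:zeta.finite} with $a=a_T$.

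The main technical obstacle is handling the right-continuous derivative $W^{(q)\prime}$ carefully: one must check that the convolution powers $(W^{(q)\prime})^{*l}$ are well defined as locally integrable functions (this uses that $W^{(q)\prime}$ is locally integrable, since $W^{(q)}$ is absolutely continuous, being an increasing convex-type scale function), and that the change of variables in the convolution is legitimate — in particular that the ``$+$'' (right-limit) convention is preserved under convolution. A secondary, milder point is verifying that $\Xi_{\phi}(x)^{-1}\phi(y)\le a_T$ uniformly on the triangle $\{d\le y\le x\le T\}$: in the bounded variation case one uses $\phi(x)<1/W^{(q)}(0)$ so that $\Xi_\phi(x)=1-\phi(x)W^{(q)}(0)>0$ and $x\mapsto\Xi_\phi(x)^{-1}$ is increasing, whence $\Xi_\phi(x)^{-1}\phi(y)\le\Xi_\phi(T)^{-1}\phi(T)$ when $y\le x\le T$; in the unbounded variation case $W^{(q)}(0)=0$ forces $\Xi_\phi\equiv1$ and the bound is simply $\sup_{y\le T}\phi(y)=\phi(T)<\infty$. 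Both are subsumed in the definition of $a_T$ in \eqref{eq:def_a}.
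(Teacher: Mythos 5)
Your proof is correct and follows essentially the same route as the paper: an induction on $l$ establishing $K^{(l)}(x,y)\le a_T^{\,l}(W^{(q)\prime})^{*l}((x-y)+)$ via the convolution change of variables, followed by summation and an appeal to Lemma~\ref{lem:zeta.finite} with $a=a_T$ for finiteness. Your supplementary remarks on the uniform bound over the triangle $\{d\le y\le x\le T\}$ and on local integrability of $W^{(q)\prime}$ simply make explicit details the paper leaves implicit.
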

The next result proves the existence and uniqueness of solutions to \eqref{eq:volterra.w.prime} and \eqref{eq:volterra.z.prime}, 
and also provides expressions for the functions $w^{(q)}(\cdot;d)$ and $z^{(q)}$ in terms of the scale function $W^{(q)}$ 
of the driving L\'evy process $X$. Because the scale functions are solutions of the Volterra equation one might try to solve them 
numerically (see, e.g., Remark VIII.1.10 of \cite{asmussen_albrecher_2010}).

\begin{prop}\label{prop:solutions} 
For all $T>0$, the following holds. 
\begin{itemize}
\item[(i)] For all $d<x \leq T $,  we have that
 $$\int_d^xK^{\dagger}(x,y)\Xi_{\phi}(x)^{-1}W^{(q)\prime}(y-d)\,dy<\infty,$$ 
and hence 
\begin{equation}
\label{sf_explicit}
w^{(q)\prime}(x;d)=\Xi_{\phi}(x)^{-1}W^{(q)\prime}((x-d)+)+\int_d^x K^{\dagger}(x,y)\Xi_{\phi}(x)^{-1}W^{(q)\prime}(y-d)\,dy
\end{equation}
is the unique solution to \eqref{eq:volterra.w.prime}.
\item[(ii)] For $0 \leq x \leq T $, 
 $$\int_0^xK^{\dagger}(x,y)\Xi_{\phi}(x)^{-1}qW^{(q)}(y)\,dy<\infty,$$ 
 and hence 
\[
z^{(q)\prime}(x)= \Xi_{\phi}(x)^{-1} qW^{(q)}(x)+\int_0^x K^{\dagger}(x,y)\Xi_{\phi}(x)^{-1}qW^{(q)}(y)\,dy
\]
is the unique solution to \eqref{eq:volterra.z.prime}.
\end{itemize}
\end{prop}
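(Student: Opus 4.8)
The plan is to read off both parts directly from the general theory of $L^1$-kernel Volterra equations recalled above, the only real work being the two stated finiteness conditions. Both \eqref{eq:volterra.w.prime} and \eqref{eq:volterra.z.prime} are equations of the form \eqref{eq:volt.eq} with the common kernel $K(x,y)=\Xi_{\phi}(x)^{-1}\phi(y)W^{(q)\prime}((x-y)+)$ and with forcing terms $g$ given by \eqref{function_g_W} and \eqref{function_g_Z}, respectively. For such an equation the general theory tells us that the Picard iterates converge, that a unique solution exists, and that it equals \eqref{volterra}, i.e.\ $u=g+\int_d^{\cdot}K^{*}(\cdot,y)g(y;d)\,dy$, as soon as one exhibits a majorant of the partial sums of the Neumann series $K^{*}=\sum_{l\ge1}K^{(l)}$ whose integral against $g$ is finite on $[d,T]$. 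Lemma~\ref{lem:i-iii} already supplies that majorant in the form $\zeta(x,y)=\zeta(x-y)$ with $\zeta$ as in \eqref{eq:zeta}. Thus the whole content of the proposition is the finiteness of the two displayed integrals; once this is in hand, \eqref{sf_explicit} and its $z$-analogue are just \eqref{volterra} written out for the present $g$ and $K^{*}$, and uniqueness is part of the abstract statement.

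For part~(i) I would fix $T>0$ and note that $\Xi_{\phi}$ is continuous, strictly positive, and non-increasing (since $\phi$ is non-decreasing and $W^{(q)}(0)\ge0$), so $\sup_{d\le y\le T}\Xi_{\phi}(y)^{-1}=\Xi_{\phi}(T)^{-1}<\infty$; hence, using $K^{*}(x,y)\le\zeta(x-y)$, it is enough to bound $\int_d^x\zeta(x-y)W^{(q)\prime}(y-d)\,dy$. Substituting $u=y-d$ turns this into $(\zeta * W^{(q)\prime})(x-d)$, and since $\zeta=\sum_{l\ge1}a_T^{\,l}(W^{(q)\prime})^{*l}$, a term-by-term convolution (justified by Tonelli) yields the renewal-type identity
\[
\zeta * W^{(q)\prime}=\sum_{l\ge1}a_T^{\,l}(W^{(q)\prime})^{*(l+1)}=\frac{1}{a_T}\bigl(\zeta-a_T W^{(q)\prime}\bigr)\le\frac{1}{a_T}\,\zeta,
\]
which is finite on $(0,\infty)$ by Lemma~\ref{lem:zeta.finite}. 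This gives the finiteness in (i) (the integral being empty when $x=d$), and \eqref{sf_explicit} follows from \eqref{volterra}.

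For part~(ii) the argument is parallel, the forcing now being $\Xi_{\phi}(x)^{-1}qW^{(q)}(x)$; using $K^{*}(x,y)\le\zeta(x-y)$ and $W^{(q)}(y)\le W^{(q)}(T)$ for $0\le y\le x\le T$ (monotonicity of $W^{(q)}$), it suffices to show $\int_0^x\zeta(u)\,du<\infty$. Here I would use that $\zeta$ is not merely pointwise finite but locally integrable, via a finite Laplace transform: by Tonelli, $\mathcal{L}[\zeta](s)=\sum_{l\ge1}a_T^{\,l}\mathcal{L}[W^{(q)\prime}](s)^{l}$, and since $\mathcal{L}[W^{(q)\prime}](s)=s\mathcal{L}[W^{(q)}](s)-W^{(q)}(0)$ is finite for all large $s$, applying Lemma~\ref{lem:prep1} to the non-negative function $a_T W^{(q)\prime}$ gives $a_T\mathcal{L}[W^{(q)\prime}](s)<1$ for all sufficiently large $s$, whence $\mathcal{L}[\zeta](s)=a_T\mathcal{L}[W^{(q)\prime}](s)/(1-a_T\mathcal{L}[W^{(q)\prime}](s))<\infty$ and therefore $\int_0^x\zeta(u)\,du\le e^{sx}\mathcal{L}[\zeta](s)<\infty$. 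This is exactly the estimate underlying Lemma~\ref{lem:zeta.finite}. Combining it with $K^{*}(x,y)\le\zeta(x-y)$ yields the finiteness in (ii), and the stated formula for $z^{(q)\prime}$, together with its uniqueness, follows from \eqref{volterra}.

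The only genuine obstacle I anticipate is the finiteness of the majorant $\zeta$ against the two forcing terms. For (i) this collapses to the clean identity $\zeta * W^{(q)\prime}\le a_T^{-1}\zeta$, so it is essentially immediate from Lemma~\ref{lem:zeta.finite}. For (ii) one needs slightly more, namely the local integrability of $\zeta$, and this is precisely where the Laplace-transform comparison of Lemma~\ref{lem:prep1} (in the spirit of Lemma~\ref{lem:zeta.finite}) is used. Everything else — convergence of the Picard scheme, uniqueness, and the closed form \eqref{volterra} of the solution — is provided by the general theory recalled before the statement.
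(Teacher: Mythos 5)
Your proof is correct and follows essentially the same route as the paper's: in both parts you invoke the general $L^1$-kernel Volterra theory together with the majorant $K^{*}(x,y)\le\zeta(x-y)$ from Lemma~\ref{lem:i-iii}, for (i) reduce the required integral to $\zeta*W^{(q)\prime}=a_T^{-1}\sum_{l\ge2}a_T^{\,l}(W^{(q)\prime})^{*l}\le a_T^{-1}\zeta$ (finite by Lemma~\ref{lem:zeta.finite}), and for (ii) reduce to the local integrability of $\zeta$ via its Laplace transform, exactly as in the paper's Appendix~D.
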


\medskip
Consider now an approximating sequence $(\phi_n)_{n\geq 1}$ for $\phi$. Then, by definition we have that for any $x\in\mathbb{R}$ the sequence $(\phi_n(x))_{n\geq1}$ is non-decreasing, which implies that the sequence
$(\Xi_{\phi_n}(x)^{-1})_{n\geq1}$ is also non-decreasing. Therefore, if we define
\[
K_n(x,y):=\Xi_{\phi_n}(x)^{-1}\phi_n(y)W^{(q)\prime}((x-y)+),\qquad\text{for $n\geq 1$ and $x> y> d$,}
\] 
then the sequence $(K_n(x,y))_{n\geq1}$ is non-decreasing for any $d< y< x$. 
From Proposition \ref{prop:solutions} we have that, for each $n\geq1$, the scale functions $w^{(q)}_n(\cdot;d)$ and $z^{(q)}_n$ associated with the level-dependent L\'evy process $U_n$ 
with rate function $\phi_n$ satisfy the following equations: 
\begin{align}\label{approx_sf}
 w_n^{(q)\prime}(x;d)= \Xi_{\phi_n}(x)^{-1}  W^{(q)\prime}((x-d)+)+\int_d^xK^{\dagger}_{n}(x,y) \Xi_{\phi_n}(x)^{-1}{W^{(q)\prime}}(y-d) \,dy
 \end{align}
and
\begin{align*}
 z_n^{(q)\prime}(x)=\Xi_{\phi_n}(x)^{-1} qW^{(q)}(x)+\int_0^xK^{\dagger}_{n}(x,y) \Xi_{\phi_n}(x)^{-1} qW^{(q)}(y)\,dy,
\end{align*}
where, for each $d\leq y\leq x$, $K^{\dagger}_{n}(x,y):=\sum_{l=1}^{\infty} K^{(l)}_n(x,y)$.

\begin{theorem}\label{conv_sf} For any $x > d$, we have
\begin{align*}
\lim_{n\to\infty}w_n^{(q)\prime}(x;d)=w^{(q)\prime}(x;d)\qquad\text{and}\qquad 
\lim_{n\to\infty}z_n^{(q)\prime}(x)=z^{(q)\prime}(x),
\end{align*}
where the functions $w^{(q)\prime}(\cdot;d)$ and $z^{(q)\prime}$ are the unique solutions to equations \eqref{eq:volterra.w.prime} and \eqref{eq:volterra.z.prime}, respectively.
\end{theorem}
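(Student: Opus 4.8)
The plan is to show convergence of the Pickard-iteration representations of the scale functions given in Proposition~\ref{prop:solutions}, exploiting the monotonicity of the approximating kernels $K_n$ together with a uniform (in $n$) majorant that controls the tails of the Neumann series. I will carry out the argument for $w_n^{(q)\prime}$ in detail; the argument for $z_n^{(q)\prime}$ is identical after replacing the forcing term $\Xi_{\phi_n}(x)^{-1}W^{(q)\prime}((x-d)+)$ by $\Xi_{\phi_n}(x)^{-1}qW^{(q)}(x)$.

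\textbf{Step 1: Pointwise convergence of the kernels.} Fix $T > d$ and work on $D = \{(x,y): d\le y\le x\le T\}$. Since $\phi_n\to\phi$ uniformly on compacts and $\phi_n(x)\uparrow\phi(x)$, we have $\Xi_{\phi_n}(x)^{-1}\uparrow\Xi_{\phi}(x)^{-1}$ for each $x$, and hence $K_n(x,y)\uparrow K(x,y)$ for every $(x,y)\in D$ by definition of these kernels. By the recursive definition $K^{(l+1)}(x,y)=\int_y^x K^{(l)}(x,w)K(w,y)\,dw$ and monotone convergence (all integrands are non-negative), an easy induction on $l$ gives $K_n^{(l)}(x,y)\uparrow K^{(l)}(x,y)$ pointwise on $D$ for each $l\ge1$, and therefore $K_n^*(x,y)=\sum_{l\ge1}K_n^{(l)}(x,y)\uparrow K^*(x,y)$ by monotone convergence again.

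\textbf{Step 2: A uniform majorant.} Here is where the bounded-variation case requires care, and this is the main obstacle. Because $\phi_n(y)\le\phi(y)\le\phi(T)$ and $\Xi_{\phi_n}(T)^{-1}\le\Xi_{\phi}(T)^{-1}$ for $y\le T$ — using that $\Xi_\phi(T)>0$ by Assumption~\textbf{[A]} in the bounded variation case and $W^{(q)}(0)=0$ in the unbounded variation case — the constant $a_T$ defined in \eqref{eq:def_a} is an upper bound for $\sup_{d\le y\le T}|\Xi_{\phi_n}(y)^{-1}\phi_n(y)|$ simultaneously for \emph{all} $n\ge1$. Consequently the proof of Lemma~\ref{lem:i-iii} applies verbatim with $a_T$ in place of the $n$-dependent constant, yielding
\[
K_n^*(x,y)\le \zeta(x-y):=\sum_{l=1}^\infty a_T^l\,(W^{(q)\prime})^{*l}((x-y)+)<\infty,\qquad (x,y)\in D,\ n\ge1,
\]
the finiteness being Lemma~\ref{lem:zeta.finite}. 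Similarly $\Xi_{\phi_n}(x)^{-1}W^{(q)\prime}(y-d)\le\Xi_{\phi}(x)^{-1}W^{(q)\prime}(y-d)=:h(x,y)$, and by Proposition~\ref{prop:solutions}(i) we have $\int_d^x\zeta(x-y)h(x,y)\,dy<\infty$ for every $x\le T$ (this is precisely the finiteness asserted there, since $K^*\le\zeta$ too).

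\textbf{Step 3: Pass to the limit.} Now fix $x\ge d$ and choose $T>x$. In the representation \eqref{approx_sf} we have $\Xi_{\phi_n}(x)^{-1}W^{(q)\prime}((x-d)+)\to\Xi_{\phi}(x)^{-1}W^{(q)\prime}((x-d)+)$ by monotone convergence, and for the integral term the integrand $K_n^*(x,y)\Xi_{\phi_n}(x)^{-1}W^{(q)\prime}(y-d)$ increases to $K^*(x,y)\Xi_{\phi}(x)^{-1}W^{(q)\prime}(y-d)$ pointwise in $y$ (Step~1) while being dominated by the integrable majorant $\zeta(x-y)h(x,y)$ from Step~2; monotone (or dominated) convergence gives
\[
\lim_{n\to\infty}\int_d^xK_n^*(x,y)\Xi_{\phi_n}(x)^{-1}W^{(q)\prime}(y-d)\,dy=\int_d^xK^*(x,y)\Xi_{\phi}(x)^{-1}W^{(q)\prime}(y-d)\,dy.
\]
Adding the two limits and invoking the explicit formula \eqref{sf_explicit} identifies the limit as $w^{(q)\prime}(x;d)$, which is the unique solution to \eqref{eq:volterra.w.prime} by Proposition~\ref{prop:solutions}(i). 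The same three steps with $g(x)=qW^{(q)}(x)/\Xi_{\phi}(x)$ (cf.~\eqref{function_g_Z}) give $z_n^{(q)\prime}(x)\to z^{(q)\prime}(x)$, the unique solution to \eqref{eq:volterra.z.prime}. This completes the proof.
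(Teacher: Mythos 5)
Your proof is correct and follows essentially the same route as the paper: compare the Pickard-iteration (Neumann series) representations of $w_n^{(q)\prime}$ and $w^{(q)\prime}$ from Proposition~\ref{prop:solutions}, prove convergence of the iterated kernels $K_n^{(l)}\to K^{(l)}$ inductively in $l$, and then pass the limit through the series and the integral using the uniform majorant $K_n^*(x,y)\le\zeta(x-y)$ furnished by Lemma~\ref{lem:i-iii}. The one place you genuinely diverge is in how you justify the limit interchanges: you observe that the monotonicity of the approximating sequence $(\phi_n)$ forces $K_n(x,y)\uparrow K(x,y)$, and hence $K_n^{(l)}\uparrow K^{(l)}$ and $K_n^*\uparrow K^*$, so that monotone convergence handles Steps~1 and~3 directly. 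The paper instead runs dominated convergence at each stage, re-invoking the bound $K_n^{(l)}\le a^l(W^{(q)\prime})^{*l}$ inside the induction and again for the series and the integral. Your monotone-convergence shortcut is a bit cleaner in that it decouples the convergence argument from the majorant, which you then only need for the finiteness of the limiting Neumann series and of the final integral; the paper's dominated-convergence argument is marginally more robust in that it would survive an approximating sequence that is not monotone. Both are valid, and the underlying mechanism — pointwise kernel convergence controlled by the single $n$-independent majorant $\zeta$ — is identical.
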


\begin{proof} We prove the result for the function $w^{(q)}$.
The case for the function $z^{(q)}$ can be treated similarly. 
We now show by induction that, for $l \geq 1$,
	\begin{equation}\label{lim_ker_l}
	\lim_{n\to\infty}K_{n}^{(l)}(x,y)= K^{(l)}(x,y),\qquad\text{for $d<y<x$.}
	\end{equation}
Using the fact that $(\phi_n)_{n\geq1}$ is an approximating sequence for $\phi$, we obtain for any $x> y> d$  that
\[
\lim_{n\to\infty}K_n(x,y)=\lim_{n\to\infty} \Xi_{\phi_n}(x)^{-1} \phi_n(y)W^{(q)\prime}((x-y)+)=K(x,y).
\]
Hence, the result follows for the case that $l=1$. Assuming that the result holds true for $l\geq 1$,
we note that
\begin{equation*}
K_{n}^{(l+1)}(x,y)=\int_y^xK_{n}^{(l)}(x,z)\Xi_{\phi_n}(z)^{-1} \phi_n(y){W^{(q)\prime}}(z-y)\,dz,\qquad\text{{for $d<y<x$.}}
\end{equation*}
By assumption, we have that
\begin{equation*}
\lim_{n\to\infty}K_{n}^{(l)}(x,z)\Xi_{\phi_n}(z)^{-1} \phi_n(y)= K^{(l)}(x,z)\Xi_{\phi}(z)^{-1} \phi(y),\qquad\text{{for $d<y<z<x$.}}
\end{equation*}
On the other hand, by the monotonicity of the sequence $(\phi_n)_{n\geq1}$ we obtain for all $n\geq 1$
\begin{align*}
a_n:=\sup_{d\leq y\leq x}\Xi_{\phi_n}(x)^{-1} \phi_n(y)\leq \sup_{d\leq y\leq x}\Xi_{\phi}(x)^{-1} \phi(y)=:a.
\end{align*}
Hence, by  identity \eqref{eq:iii} in the proof of Lemma \ref{lem:i-iii} (see Appendix \ref{AppD}) we have that
\begin{equation}\label{kern_bound}
K_{n}^{(l)}(x,z)\le a^l(W^{(q)\prime})^{*l}((x-z)+),\qquad\text{for $0<z<x$, $n\geq 1$.}
\end{equation}
Therefore, we can deduce by dominated convergence that
\begin{equation*}
\lim_{n\to\infty}K_{n}^{(l+1)}(x,z)= K^{(l+1)}(x,z),\qquad\text{ for $d<z<x$.}
\end{equation*}
Therefore, proceeding by induction we obtain \eqref{lim_ker_l}.	
\par Now, using \eqref{kern_bound} and Lemma \ref{lem:i-iii}, we can use dominated convergence to conclude that for any $d<x<y$,
\begin{equation*}
\lim_{n\to\infty}K_{n}^{\dagger}(x,y)=K^{\dagger}(x,y).
\end{equation*}
We now observe that for $x>d$ it holds that 
\begin{equation*}
\int_d^x K_{n}^{\dagger}(x,y)\Xi_{\phi_n}(x)^{-1} W^{(q)\prime}(y-d)\,dy\le a 
\int_d^x \zeta(x-y) W^{(q)\prime}(y-d)\,dy<\infty,\qquad\text{for every $n\geq 1$}.
\end{equation*}
Hence, by the dominated convergence theorem we obtain 
\begin{equation*}
\lim_{n\to\infty}\int_d^xK_{n}^{\dagger}(x,y)\Xi_{\phi_n}(x)^{-1} W^{(q)\prime}(y-d)\,dy=
        \int_d^xK^{\dagger}(x,y)\Xi_{\phi}(x)^{-1} W^{(q)\prime}(y-d)\,dy,\qquad\text{for $x\geq d$}.
\end{equation*}
Finally, using \eqref{approx_sf} we obtain for $x>d$ that 
\begin{align*}
\lim_{n\to\infty}w_n^{(q)\prime}&(x;d)=\lim_{n\to\infty}\left[\Xi_{\phi_n}(x)^{-1} W^{(q)\prime}((x-d)+)+\int_d^x{K^{\dagger}_n}(x,y)
\Xi_{\phi_n}(x)^{-1} {W^{(q)\prime}}(y-d)\,dy\right]\\
&=\Xi_{\phi}(x)^{-1} W^{(q)\prime}((x-d)+)+\int_d^x{K^{\dagger}} (x,y) \Xi_{\phi}(x)^{-1} W^{(q)\prime}(y-d)\,dy=w^{(q)\prime}(x;d).
\end{align*}
\end{proof}
The next result will introduce new functions that will be used in the next section. 
Consider now the  scale function $w^{(q)}$ associated with the level-dependent L\'evy process $U$ with rate function $\phi$, 
defined in \eqref{sf_explicit}. Notice that for $\phi=\phi_k$ we write $u=u_k$.

\begin{lemma}\label{fun_u,v}
	For any $x\in\mathbb{R}$, if we denote $u^{(q)}(x):=\lim_{d\to-\infty}\frac{w^{(q)}(x;d)}{W^{(q)}(-d)}$, then $u^{(q)}$ is the solution to the following integral equation
	\begin{align}
	\label{fun_u}
	u^{(q)}(x)=e^{\Phi(q)x}+\int_{d'}^x  \phi(y) W^{(q)}(x-y)u^{(q)\prime}(y)\;dy.
	\end{align} 
\end{lemma}
\begin{proof} 
	First, we note by \eqref{sf_explicit} that
	\begin{align*}
	\frac{w^{(q)\prime}(x;d)}{W^{(q)}(-d)}=\Xi_{\phi}(x)^{-1} \frac{W^{(q)\prime}(x-d)}{W^{(q)}(-d)}+\int_d^x K^{\dagger}(x,y) \Xi_{\phi}(x)^{-1}\frac{W^{(q)\prime}(y-d)}{W^{(q)}(-d)}\,dy.
	\end{align*}
	Hence, using Exercise 8.5 in \cite{kyprianou2014}, the fact that $K^{\dagger}(x,y)=0$ for $y<d'$, and dominated convergence, we have that
	\begin{align*}
	\lim_{d\to-\infty}\frac{w^{(q)\prime}(x;d)}{W^{(q)}(-d)}=\Phi(q)\Xi_{\phi}(x)^{-1} 
	e^{\Phi(q)x}+\int_{d'}^x \Phi(q) \Xi_{\phi}(x)^{-1} K^{\dagger}(x,y)e^{\Phi(q)y}\,dy.
	\end{align*}
The previous identity implies that $u^{(q)\prime}$ is the unique solution to
\begin{align*}
u^{(q)\prime}(x)=\Xi_{\phi}(x)^{-1} \Phi(q)e^{\Phi(q)x}+\int_{d'}^x \Xi_{\phi}(x)^{-1} \phi(y) W^{(q)\prime}(x-y)u^{(q)\prime}(y)\;dy,\qquad\text{$x\in\mathbb{R}$.}
\end{align*}
Hence by defining $u^{(q)}(x)=1+\int_{d'}^xu^{(q)\prime}(y)dy$, and proceeding like in Lemma \ref{lem:u.uprime} we obtain that $u^{(q)}$ is the unique solution to
\begin{align*}
u^{(q)}(x)=e^{\Phi(q)x}+\int_{d'}^x  \phi(y) W^{(q)}(x-y)u^{(q)\prime}(y)\;dy,\qquad\text{$x\in\mathbb{R}$.}
\end{align*}
On the other hand we have using \eqref{eq:volterra.w}
\begin{equation*}
\lim_{d\to-\infty}\frac{w^{(q)}(x;d)}{W^{(q)}(-d)}=e^{\Phi(q)x}+\int_{d'}^x  \phi(y) W^{(q)}(x-y)u^{(q)\prime}(y)\;dy,\qquad\text{$x\in\mathbb{R}$.}
\end{equation*}
Hence by the uniqueness of the solution to \eqref{fun_u} we have that $u^{(q)}(x)=\lim_{d\to-\infty}\frac{w^{(q)}(x;d)}{W^{(q)}(-d)}$.
\end{proof}

\subsection{Fluctuation identities for level-dependent L\'evy processes}
We  recall  that $w^{(q)}$ is a {\it scale function} if it fulfills
equation \eqref{eq:volterra.w}. That is, if it satisfies
\begin{equation}
\label{sf_gd_1}
w^{(q)}(x;d)=W^{(q)}(x-d)+\int_d^xW^{(q)}(x-y)\phi(y)w^{(q)\prime}(y;d)\,dy.
\end{equation}
A similar definition is given for the scale function $z^{(q)}$, which is the solution
of 
\begin{equation}\label{sf_gd_2}
z^{(q)}(x)=Z^{(q)}(x)+\int_0^xW^{(q)}(x-y)\phi(y)z^{(q)\prime}(y)\,dy.
\end{equation}

This definition can be justified as follows.
Consider now a level-dependent L\'evy process $U$ with rate function $\phi$, and an approximating sequence $(\phi_n)_{n\geq1}$ for $\phi$. 
For each $n\geq 1$, we consider the associated multi-refracted L\'evy process $U_n$ with rate function $\phi_n$. 
By Proposition \ref{conv_processes}, we have the convergence of the sequence $(U_n)_{n\geq1}$ to the process $U$ uniformly on compact time intervals.

On the other hand, we showed in Section \ref{sec:mult.refr} that with the use of the $w^{(q)}_n$ and $z^{(q)}_n$ scale functions 
we can compute important fluctuation identities for the process $U_n$ for each $n\geq1$.
Finally, by Theorem \ref{conv_sf} we found that the sequences of scale functions $(w^{(q)}_n)_{n\geq0}$ and $(z^{(q)}_n)_{n\geq0}$ converge to the corresponding scale functions 
  $w^{(q)}$ and $z^{(q)}$ of the process $U$, respectively.

These facts imply that we can obtain fluctuation identities for the process $U$ as the limits of 
the respective identities for the sequence of multi-refracted L\'evy processes $(U_n)_{n\geq1}$, 
and hence these will be given in terms of the scale functions $w^{(q)}$ and $z^{(q)}$. We will first prove a preliminary result, which follows verbatim from
\cite{kyprianouloeffen2010}. 
\begin{lemma}\label{p=0}
Let $\overline{U}(t):=\sup_{0\leq s\leq t}U(s)$. For each given $x,a\in\mathbb{R}$, 
the level-dependent L\'evy process $U$ with rate function $\phi$ satisfies 
$\mathbb{P}_x(\overline{U}(t) = a) = 0$ for Lebesgue almost every $t>0$. 
\end{lemma}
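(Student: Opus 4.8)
The plan is to reduce the statement, via Tonelli's theorem and the strong Markov property of $U$, to the assertion that the level $a$ is regular for the half-line $(a,\infty)$ for $U$, which in turn follows from a pathwise comparison with the driving process $X$. First I would record two structural facts. Since $X$, and hence $U$, has no positive jumps, the running supremum $\overline U$ is continuous and non-decreasing, so the level set $L_a:=\{t\ge 0:\overline U(t)=a\}$ is a (possibly empty or degenerate) interval. Writing $\kappa^{a,+}:=\inf\{t>0:U(t)\ge a\}$ and $\varrho_a:=\inf\{t>0:U(t)>a\}$, one has $U(\kappa^{a,+})=a$ on $\{\kappa^{a,+}<\infty\}$ (creeping upwards), $\kappa^{a,+}\le\varrho_a$, and $\mathrm{Leb}(L_a)=\varrho_a-\kappa^{a,+}$ on $\{\kappa^{a,+}<\infty\}$, while $L_a=\emptyset$ on $\{\kappa^{a,+}=\infty\}$. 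By Tonelli,
\[
\int_0^\infty \mathbb{P}_x(\overline U(t)=a)\,dt=\mathbb{E}_x[\mathrm{Leb}(L_a)]=\mathbb{E}_x\big[(\varrho_a-\kappa^{a,+})\ind_{\{\kappa^{a,+}<\infty\}}\big],
\]
so it suffices to show the right-hand side vanishes. Applying the strong Markov property at $\kappa^{a,+}$ and using $U(\kappa^{a,+})=a$, the right-hand side equals $\mathbb{P}_x(\kappa^{a,+}<\infty)\,\mathbb{E}_a[\varrho_a]$; hence it is enough to prove that $\varrho_a=0$ $\mathbb{P}_a$-a.s.

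For this upward regularity I would compare $U$ with $X$. Starting from $U(0)=a$, the SDE \eqref{sde1} together with $\phi\ge 0$ gives $U(u)\le a+X(u)$ for all $u$, hence $\overline U(u)\le a+\overline X(u)$; since $\phi$ and $\overline X$ are non-decreasing this yields $\int_0^t\phi(U(s))\,ds\le t\,\phi(a+\overline X(t))$, and therefore
\[
U(t)\ge a+X(t)-t\,\phi\big(a+\overline X(t)\big),\qquad t\ge 0.
\]
Fix $\varepsilon>0$. By continuity of $\phi$ and $\overline X(t)\to 0$ as $t\downarrow 0$, a.s.\ $\phi(a+\overline X(t))\le \phi(a)+\varepsilon$ for all small enough $t$, so $U(t)>a$ whenever $Y(t):=X(t)-(\phi(a)+\varepsilon)t>0$. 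Now $Y$ is again a spectrally negative L\'evy process: in the unbounded variation case $Y$ is of unbounded variation, and in the bounded variation case $Y(t)=(c-\phi(a)-\varepsilon)t-A(t)$ has strictly positive drift once $\varepsilon<c-\phi(a)$, which is admissible by Assumption \textbf{[A]}. In either case $0$ is regular for $(0,\infty)$ for $Y$, so $\{t>0:Y(t)>0\}$ accumulates at $0$ a.s.; consequently $\varrho_a=0$ $\mathbb{P}_a$-a.s., and combining with the reduction above finishes the proof. (In the bounded variation case one may also argue directly that $A(t)/t\to 0$ a.s.\ as $t\downarrow 0$, so $U(t)/t\to c-\phi(a)>0$.)

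The only delicate point is this last step; the comparison inequality is what makes it collapse cleanly onto the classical fact that $0$ is regular for $(0,\infty)$ for a spectrally negative L\'evy process that is not the negative of a subordinator, and the rest is bookkeeping. This is precisely the reasoning of Kyprianou and Loeffen \cite{kyprianouloeffen2010} for the single-refraction case, and it carries over verbatim because it uses only $\phi\ge 0$, the monotonicity of $\phi$, the absence of positive jumps of $U$, and the strong Markov property of $U$, all of which hold here.
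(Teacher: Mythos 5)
Your proof is correct, but it is \emph{not} a reproduction of the Kyprianou--Loeffen argument the paper invokes; it is a genuinely different and in fact shorter route. The paper's (commented-out) proof, following Lemma 21 of \cite{kyprianouloeffen2010}, proceeds in two stages: first one shows $\mathbb{P}_x(U(t)=a)=0$ for a.e.\ $t$ by approximating $U$ by multi-refracted processes and passing to the limit in the resolvent identity of Theorem~\ref{Resolvents} using Fatou and Theorem~\ref{conv_sf}; then one upgrades to the running supremum via a separate regularity estimate. You collapse the two stages into one: since $\overline{U}$ is continuous and non-decreasing, the level set $L_a=\{t:\overline{U}(t)=a\}$ is an interval of length $\varrho_a-\kappa^{a,+}$, so by Tonelli and the strong Markov property at $\kappa^{a,+}$ the quantity $\int_0^\infty\mathbb{P}_x(\overline{U}(t)=a)\,dt$ reduces directly to $\mathbb{P}_x(\kappa^{a,+}<\infty)\,\mathbb{E}_a[\varrho_a]$, which vanishes once $a$ is shown regular for $(a,\infty)$ under $\mathbb{P}_a$. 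Your comparison bound $U(t)\ge a+X(t)-t\,\phi(a+\overline{X}(t))$ and the reduction to the regularity of $0$ for $(0,\infty)$ for the spectrally negative process $Y(t)=X(t)-(\phi(a)+\varepsilon)t$ are correct, relying only on $\phi\ge0$, its monotonicity and continuity, $\phi(a)<c$ in the bounded-variation case, and the classical regularity result for spectrally negative L\'evy processes that are not negatives of subordinators. What your route buys is significant: it avoids the resolvent density and scale-function convergence machinery entirely, which in particular sidesteps the circularity concern the authors themselves flagged in the commented source (Lemma~\ref{p=0} feeds into Theorem~\ref{Resolvents_sd}, yet the sketched proof leaned on resolvent convergence). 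What the paper's route buys that yours does not is the byproduct $\mathbb{P}_x(U(t)=a)=0$ a.e.\ $t$, i.e.\ the atomlessness of the law of $U(t)$, which is of independent use. One small caveat: the identification $\mathrm{Leb}(L_a)=\varrho_a-\kappa^{a,+}$ is literally valid for $x<a$ and $x>a$; for $x=a$ the set is $[0,\varrho_a]$ and $\kappa^{a,+}$ could a priori be positive, but since you prove $\varrho_a=0$ $\mathbb{P}_a$-a.s.\ (which forces $\kappa^{a,+}=0$ as well) the identity holds in the relevant case, so this is a presentational rather than a mathematical gap. Similarly, the strong Markov property of $U$ is used but the paper only states it explicitly for the multi-refracted $U_k$; it does hold for $U$ by pathwise uniqueness, but it would be worth flagging.
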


Let $ a \in \mathbb{R}$ and define the following first-passage stopping times for the level-dependent process:
\begin{align*}
\kappa^{a,-} & := \inf\{t>0 \colon U(t)<a\} \quad \textrm{and} \quad \kappa^{a,+} := \inf\{t>0 \colon U(t)\geq a\}.
\end{align*}

In the following theorem we derive formulas for resolvents. Notice that \eqref{resolvent2_g} for $\phi=\phi_k$ is consistent with \eqref{resolvent2}, however we do not have a more explicit formula for $c^{(q)}(y;d)$ in a general case.
\begin{theorem}{\textbf{(Resolvents)}}\label{Resolvents_sd}\\
	Fix a Borel set $\mathcal{B} \subseteq \mathbb{R}$, then
	\begin{itemize}
		\item[(i)] For  $q\geq 0$ and $d\leq x \leq a$,
		\begin{align}\label{resolvent1_g}
		\e_x\left[\int_0^{\kappa^{a,+}\wedge \kappa^{d,-}}e^{-qt}\ind_{\{U(t)\in 
\mathcal{B}\}}\;dt\right]
		= \int_{\mathcal{B} \cap (d,a)} \Xi_{\phi}(y)^{-1} \left(\frac{w^{(q)}(x;d)}{w^{(q)}(a;d)}w^{(q)}(a;y)-w^{(q)}(x;y)\right)\; dy.
		\end{align}
	      \item[(ii)]  For $q > 0$ and $x \geq 0$, there exists $c^{(q)}(y;d)>0$ such that
                $w^{(q)}(x;d) c^{(q)}(y;d)-w^{(q)}(x;y)\ge 0$ and
		\begin{align}\label{resolvent2_g}
		\e_x\left[\int_0^{\kappa^{d,-}}e^{-qt}\ind_{\{U(t)\in \mathcal{B}\}}dt\right]
		=\int_{\mathcal{B} \cap (0,\infty)}\Xi_{\phi}(y)^{-1}\left(c^{(q)}(y;d)w^{(q)}(x;d)-w^{(q)}(x;y)\right)dy.
		\end{align}
		\item[(iii)] For  $q \geq 0$ and $ x\leq a$,
		\begin{align}\label{resolvent3_g}
		\e_x\left[\int_0^{\kappa^{a,+}}e^{-qt}\ind_{\{U(t)\in \mathcal{B} \}}dt\right]
		= \int_{\mathcal{B} \cap (-\infty,a)} \Xi_{\phi}(y)^{-1} \left(\frac{u^{(q)}(x)}{u^{(q)}(a)}w^{(q)}(a;y)-w^{(q)}(x;y)\right)dy,
		\end{align}
		where $u^{(q)}$ is given by \eqref{fun_u}.
	\end{itemize}
\end{theorem}
\begin{proof}
(i) Consider an approximating sequence $(\phi_n)_{n\geq1}$ for the rate function $\phi$ of the level-dependent L\'evy process $U$. If we denote by $(U_n)_{n\geq 0}$ the corresponding sequence of non-increasing 
multi-refracted L\'evy processes, then by Proposition \ref{conv_processes}, we know that the sequence converges uniformly on compact sets to $U$. Because
\[
|\overline{U}_n(t)-\overline{U}(t)|\vee|\underline{U}_n(t)-\underline{U}(t)|\leq \sup_{s\in[0,t]}|U_n(s)-U(s)|, \qquad\text{$t\geq0$},
\]
we have for $t>0$ that 
\[
\lim_{n\uparrow\infty}(U_n(t),\overline{U}_n(t),\underline{U}_n(t))=(U(t),\overline{U}(t),\underline{U}(t)),
\]
where $\underline{U}(t)=\inf_{0\leq s\leq t}U(s)$ and $\underline{U}_n(t)=\inf_{0\leq s\leq t}U_n(s)$.
Now, using the fact that for each $t> 0$ the sequence $(U_n(t))_{n\geq1}$ is non-increasing, we have for $a,y\geq0$
\begin{align*}
\{\underline{U}(t)\geq d \}&=\bigcap_{n\geq 1}\{\underline{U}_n(t)\geq d\},\\
\{\overline{U}(t)\geq a\}&=\bigcap_{n\geq 1}\{\overline{U}_n(t)\geq a\},\\
\{U(t)\geq y\}&=\bigcap_{n\geq 1}\{U_n(t)\geq y\}.
\end{align*}
This means that for any $x\in\mathbb{R}$ and $t>0$, we have
\begin{align*}
\mathbb{P}_x\left(U(t)\geq y, \overline{U}(t)\geq a, \underline{U}(t)\geq d\right)
&=\mathbb{P}_x\left(\bigcap_{n\geq 1}\{U_n(t)\geq y, \overline{U}_n(t)\geq a, \underline{U}_n(t)\geq d\}\right)\\
&=\lim_{n\to\infty}\mathbb{P}_x\left(U_n(t)\geq y, \overline{U}_n(t)\geq a, \underline{U}_n(t)\geq d\right).
\end{align*}
By Lemma \ref{p=0} we have that $\mathbb{P}_x(\overline{U}(t)=a)=0$ for any $x\in\mathbb{R}$ and
for Lebesgue almost every $t>0$. This in turn implies that
\begin{align*}
\mathbb{P}_x\left(U(t)\geq y, \overline{U}(t)\leq a, \underline{U}(t)\geq d \right)&=
\mathbb{P}_x\left(U(t)\geq y,\underline{U}(t)\geq d\right)\\ &-\mathbb{P}_x\left(U(t)\geq y, \overline{U}(t)\geq a, \underline{U}(t)\geq d\right)\\
&=\lim_{n\to\infty}\mathbb{P}_x\left(U_n(t)\geq y,\underline{U}_n(t)\geq d\right)\\ &
-\lim_{n\to\infty} \mathbb{P}_x\left(U_n(t)\geq y, \overline{U}_n(t)\geq a, \underline{U}_n(t)\geq d\right)\\
&=\lim_{n\to\infty}\mathbb{P}_x\left(U_n(t)\geq y, \overline{U}_n(t)\leq a, \underline{U}_n(t)\geq d\right).
\end{align*}
Thus, we obtain by bounded convergence that
\begin{align*}
\lim_{n\to\infty}\mathbb{E}_x\left[\int_0^{\kappa_n^{a,+}\wedge\kappa_n^{d,-}}e^{-qt}\ind_{\{U_n(t)\geq y\}}dy\right]&=
\lim_{n\to\infty}\int_0^{\infty}e^{-qt}\mathbb{P}_x\left(U_n(t)\geq y, \overline{U}_n(t)\leq a, \underline{U}_n(t)\geq d \right)dt\\
&=\int_0^{\infty}e^{-qt}\mathbb{P}_x\left(U(t)\geq y, \overline{U}(t)\leq a, \underline{U}(t)\geq d\right)dt\\
&=\mathbb{E}_x\left[\int_0^{\kappa^{a,+}\wedge\kappa^{d,-}} e^{-qt}\ind_{\{U(t)\geq y\}}dy\right].
\end{align*}
We recall that
\begin{align*}
\Xi_{\phi_{n}}(y)=1-W^{(q)}(0)\phi_{n}(y).
\end{align*}
Hence,
\begin{align*}
\mathbb{E}_x\left[\int_0^{\kappa^{a,+}\wedge\kappa^{d,-}}e^{-qt}\ind_{\{U(t)\geq y\}}dt\right]=&\lim_{n\to\infty}\mathbb{E}_x\left[\int_0^{\kappa_n^{a,+}\wedge\kappa_n^{d,-}}e^{-qt}\ind_{\{U_n(t)\geq y\}}dt\right]\\
&=\lim_{n\to\infty}\int_{y}^{\infty} \Xi_{\phi_{n}}(s)^{-1} \left\{\frac{w_n^{(q)}(x;d)}{w_n^{(q)}(a;d)}w_n^{(q)}(a;s)-w_n^{(q)}(x;s)\right\} ds\\
&=\int_{y}^{\infty}\Xi_{\phi}(s)^{-1} \left\{\frac{w^{(q)}(x;d)}{w^{(q)}(a;d)}w^{(q)}(a;s)-w^{(q)}(x;s)\right\} ds.
\end{align*}
(ii) 
 Identity \eqref{resolvent2_g} follows by taking $a\uparrow\infty$ in \eqref{resolvent1_g}. 
Since the left hand side of \eqref{resolvent1_g} is a measure, we must have
\[
\Xi_{\phi}(y)^{-1} \left(\frac{w^{(q)}(x;d)}{w^{(q)}(a;d)}w^{(q)}(a;y)-w^{(q)}(x;y)\right)\ge0,\qquad\text{for $x,y\in[d,a]$,}
\]
which yields 
$$\frac{w^{(q)}(a;y)}{w^{(q)}(a;d)}\ge \frac{w^{(q)}(x;y)}{w^{(q)}(x;d)}>0,\qquad\text{for $x,y\in[d,a]$.}$$
 Now, using the above inequality
we have that the mapping $a\mapsto\frac{w^{(q)}(a;y)}{w^{(q)}(a;d)}$ is monotone increasing, for fixed $y>d$. Hence it converges and we denote 
\[
c^{(q)}(y;d):=\lim_{a\to\infty}\frac{w^{(q)}(a;y)}{w^{(q)}(a;d)},\qquad\text{ for $y>d$.}
\]
Now we note using monotone convergence 
\begin{align*}
\int_d^\infty  \Xi_{\phi}(y)^{-1} \left(c^{(q)}(y;d)w^{(q)}(x;d)-w^{(q)}(x;y)\right)\;dy=\e_x\left[\int_0^{\kappa^{d,-}}e^{-qt}dt\right]<\infty.
\end{align*}
This implies that $c^{(q)}(y;d)<\infty$ for $y>d$ Lebesgue a.e. Hence, identity \eqref{resolvent2_g} follows by monotone convergence by taking $a\uparrow\infty$ in \eqref{resolvent1_g}.

(iii) 
The result follows by taking the limit $d\downarrow-\infty$ in \eqref{resolvent1_g} and applying Lemma \ref{fun_u,v}.\\
\end{proof}
Now, we will prove the identities related to the two-sided exit problem for the level-dependent  L\'evy process $U$.
\begin{theorem}{\textbf{(Two-sided exit problem)}}\label{th:mr.recursion_sd}
	\begin{itemize}
		\item[(i)]
		For $d \leq x\leq a$ and $q\geq 0$,
		\begin{equation}\label{main_twosided_up_sd}
		\e_x \left[ \mathrm{e}^{-q \kappa^{a,+}} \ind_{\{\kappa^{a,+}< \kappa^{d,-}\}} \right] =\frac{\wq(x;d)}{\wq(a;d)}.
		\end{equation} 		
		\item[(ii)] For $ 0 \leq x\leq a$ and $q \geq 0$,
		\begin{equation}\label{main_twosided_down_sd}
		\e_x \left[ \mathrm{e}^{-q \kappa^{0,-}} \ind_{\{\kappa^{0,-}<\kappa^{a,+}\}} \right] =z^{(q)}(x)-\frac{z^{(q)}(a)}{\wq(a)}\wq(x).
		\end{equation}
	\end{itemize}
\end{theorem}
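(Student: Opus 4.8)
The strategy I would follow is to obtain both identities as limits of the corresponding multi-refracted identities of Theorem~\ref{th:mr.recursion}, taken along an approximating sequence. Fix a rate function $\phi$ satisfying~\textbf{[A]}, let $(\phi_n)_{n\geq1}$ be an approximating sequence with associated multi-refracted processes $(U_n)_{n\geq1}$, and recall from Proposition~\ref{conv_processes} that $U_n\to U$ uniformly on compact time intervals a.s.\ and from Lemma~\ref{lem:mon} that $U_{n+1}\leq U_n$. For each $n$ large enough (for $d\ge0$ one first absorbs the barriers of $\phi_n$ lying below $d$ into the drift, which is harmless since $U_n$ is killed upon leaving $[d,a]$), Theorem~\ref{th:mr.recursion} applied to $U_n$ yields
\[
\e_x\!\left[\me^{-q\kappa_n^{a,+}}\ind_{\{\kappa_n^{a,+}<\kappa_n^{d,-}\}}\right]=\frac{w_n^{(q)}(x;d)}{w_n^{(q)}(a;d)}\quad\text{and}\quad \e_x\!\left[\me^{-q\kappa_n^{0,-}}\ind_{\{\kappa_n^{0,-}<\kappa_n^{a,+}\}}\right]=z_n^{(q)}(x)-\frac{z_n^{(q)}(a)}{w_n^{(q)}(a)}\,w_n^{(q)}(x),
\]
and it remains to pass to the limit on both sides.

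For the right-hand sides, Theorem~\ref{conv_sf} gives $w_n^{(q)\prime}(\cdot;d)\to w^{(q)\prime}(\cdot;d)$ and $z_n^{(q)\prime}\to z^{(q)\prime}$ pointwise. Since $w_n^{(q)}(d;d)=W^{(q)}(0)$ and $z_n^{(q)}(0)=1$ for all $n$, and the derivatives are dominated uniformly in $n$ by an integrable function (the majorant $\zeta$ of Lemma~\ref{lem:i-iii}, using $a_n\leq a$), integrating from the common boundary value and invoking dominated convergence upgrades this to $w_n^{(q)}(x;d)\to w^{(q)}(x;d)$ and $z_n^{(q)}(x)\to z^{(q)}(x)$ for every $x$. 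As $w^{(q)}(a;d)>0$ for $a>d$ (clear from \eqref{sf_explicit}), the right-hand sides converge to those of \eqref{main_twosided_up_sd} and \eqref{main_twosided_down_sd}.

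For the left-hand sides I would reuse the argument already used in the proof of Theorem~\ref{Resolvents_sd}(i). The uniform convergence $U_n\to U$ forces $\overline{U}_n(t)\downarrow\overline{U}(t)$ and $\underline{U}_n(t)\downarrow\underline{U}(t)$ for every $t$, where $\overline{U}(t)=\sup_{s\leq t}U(s)$ and $\underline{U}(t)=\inf_{s\leq t}U(s)$; hence the monotone set identities $\bigcap_n\{\overline{U}_n(t)\geq a\}=\{\overline{U}(t)\geq a\}$ and $\bigcap_n\{\underline{U}_n(t)\geq d\}=\{\underline{U}(t)\geq d\}$ hold. Combining these with Lemma~\ref{p=0}, which gives $\mathbb{P}_x(\overline{U}(t)=a)=0$ for Lebesgue almost every $t$ (so that $\{t<\kappa^{a,+}\}$ and $\{\overline{U}(t)<a\}$ agree up to a null set for a.e.\ $t$, and likewise for the approximants), one obtains
\[
\mathbb{P}_x\!\left(\overline{U}_n(t)<a,\ \underline{U}_n(t)\geq d\right)\xrightarrow[n\to\infty]{}\mathbb{P}_x\!\left(\overline{U}(t)<a,\ \underline{U}(t)\geq d\right)\qquad\text{for a.e.\ }t>0,
\]
together with the analogous convergences for the events defining the exit through the lower barrier. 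For $q>0$ one then writes each exit quantity as a Laplace integral of such joint probabilities, for instance $\e_x[\me^{-q(\kappa_n^{a,+}\wedge\kappa_n^{d,-})}]=1-q\int_0^\infty\me^{-qt}\mathbb{P}_x(\overline{U}_n(t)<a,\underline{U}_n(t)\geq d)\,\md t$, and applies bounded convergence; since the tie event $\{\kappa^{a,+}=\kappa^{d,-}\}$ is empty (as $a>d$) and $\me^{-q\cdot\infty}=0$, the two one-directional exit pieces recombine consistently and one recovers \eqref{main_twosided_up_sd}, with \eqref{main_twosided_down_sd} obtained in exactly the same manner. The case $q=0$ is then recovered by letting $q\downarrow0$ (dominated convergence, all quantities being bounded and continuous in $q$), using that a bounded interval is exited a.s.

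The step I expect to be the main obstacle is this last, probabilistic part: rewriting each exit identity as a time-integral of a joint two-barrier probability and justifying the interchange of the limit in $n$ with that integral demands careful bookkeeping of which boundary events are open and which are closed, and a repeated appeal to Lemma~\ref{p=0} to pass between them --- the same delicate point that had to be settled for the resolvents in Theorem~\ref{Resolvents_sd}. Once the Lebesgue-a.e.\ convergence of those probabilities is established, the remaining manipulations are routine and parallel those of \cite{kyprianouloeffen2010}.
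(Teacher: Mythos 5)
Your proposal follows essentially the same route as the paper: approximate $\phi$ by a monotone sequence $(\phi_n)$, invoke the multi-refracted identity of Theorem~\ref{th:mr.recursion} for each $U_n$, pass to the limit on the right-hand side via Theorem~\ref{conv_sf}, and pass to the limit on the left-hand side via the Laplace-integral representation of the exit functionals, the monotone set identities for $(\overline{U}_n,\underline{U}_n)$, and Lemma~\ref{p=0}. The only place you are slightly imprecise is the ``recombine consistently'' step at the end: knowing that $\e_x[\me^{-q(\kappa_n^{a,+}\wedge\kappa_n^{d,-})}]$ converges does not by itself separate the two one-directional pieces, since that quantity is their sum; the paper resolves this (following Kyprianou--Loeffen) by expressing $\e_x[\me^{-q\kappa^{a,+}}\ind_{\{\kappa^{a,+}<\kappa^{d,-}\}}]$ directly as a ratio of resolvent-type Laplace integrals, namely
\[
q\,\frac{\int_0^{\infty}\me^{-qt}\p_x(\underline{U}(t)\geq d)\,\md t-\int_0^{\infty}\me^{-qt}\p_x\bigl(\underline{U}(t)\geq d,\ \overline{U}(t)\leq a\bigr)\,\md t}{1-q\int_0^{\infty}\me^{-qt}\p_a\bigl(\overline{U}(t)\leq a\bigr)\,\md t},
\]
to which the bounded-convergence argument you describe (and which was already carried out in the proof of Theorem~\ref{Resolvents_sd}) applies term by term; otherwise your plan, including the use of the majorant $\zeta$ to upgrade the pointwise convergence of derivatives to convergence of $w_n^{(q)}$ and $z_n^{(q)}$ themselves, matches the paper's proof.
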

\begin{proof}
(i) As for the previous result, we can take a non-increasing sequence of multi-refracted L\'evy process $(U^n)_{n\geq 0}$ 
that converges uniformly on compact sets to the level-dependent L\'evy process $U$ with the rate function $\phi$. Then, by the proof of Theorems 4 and 5 in \cite{kyprianouloeffen2010},

\begin{align*}
&\e_x \left[ \mathrm{e}^{-q \kappa^{a,+}} \ind_{\{\kappa^{a,+}< \kappa^{d,-}\}} \right]\\&=
q\frac{\int_0^{\infty} e^{-qt}\mathbb{P}_x(U(t)\in\mathbb{R},\underline{U}(t)\geq d)dt-\int_0^{\infty} e^{-qt}
	\mathbb{P}_x(U(t)\in[0,a],\underline{U}(t)\geq d,\overline{U}(t)\leq a)dt}
{1-q\int_0^{\infty} e^{-qt}\mathbb{P}_a(U(t)\in(-\infty,a],\overline{U}(t)\leq a)dt}.
\end{align*}	
	
On the other hand, by the proof of Theorem \ref{Resolvents_sd}, it follows that

\begin{align*}
\lim_{n\to\infty}&q\frac{\int_0^{\infty} e^{-qt}\mathbb{P}_x(U^n(t)\in\mathbb{R},\underline{U}_n(t)\geq d))dt-\int_0^{\infty} e^{-qt}\mathbb{P}_x(U^n(t)\in[0,a],\underline{U}_n(t)\geq d,
	\overline{U}_n(t)\leq a))dt}{1-q\int_0^{\infty} e^{-qt}\mathbb{P}_a(U^n(t)\in(-\infty,a],\overline{U}_n(t)\leq a)dt}\\
&=q\frac{\int_0^{\infty} e^{-qt}\mathbb{P}_x(U(t)\in\mathbb{R},\underline{U}(t)\geq d)dt-\int_0^{\infty} e^{-qt}\mathbb{P}_x(U(t)\in[0,a],\underline{U}(t)\geq d,\overline{U}(t)\leq a)dt}
{1-q\int_0^{\infty} e^{-qt}\mathbb{P}_a(U(t)\in(-\infty,a],\overline{U}(t)\leq a)dt}.
\end{align*}	
	
Hence, by Theorem \ref{th:mr.recursion} (i) and Theorem \ref{conv_sf}, we obtain

\begin{align*}
\e_x &\left[ \mathrm{e}^{-q \kappa^{a,+}} \ind_{\{\kappa^{a,+}< \kappa^{d,-}\}} \right]\\
&=\lim_{n\to\infty}q\frac{\int_0^{\infty} e^{-qt}\mathbb{P}_x(U^n(t)\in\mathbb{R},\underline{U}_n(t)\geq d)dt-\int_0^{\infty} e^{-qt}\mathbb{P}_x(U^n(t)\in[0,a],
	\underline{U}_n(t)\geq d,\overline{U}_n(t)\leq a)dt}{1-q\int_0^{\infty} e^{-qt}\mathbb{P}_a(U^n(t)\in(-\infty,a],\overline{U}_n(t)\leq a)dt}\\
&=\lim_{n\to\infty}\frac{w_n^{(q)}(x;d)}{w_n^{(q)}(a;d)}=\frac{\wq(x;d)}{\wq(a;d)}.
\end{align*}	
	
(ii) By (i) and the proof of Theorems 4 and 5 in \cite{kyprianouloeffen2010}, we have
\begin{align*}
\e_x &\left[ \mathrm{e}^{-q \kappa^{0,-}} \ind_{\{\kappa^{0,-}<\kappa^{a,+}\}} \right]\\
=&1-q\int_0^{\infty}\mathbb{P}_x(U(t)\in\mathbb{R},\underline{U}(t)\geq 0)dt-
 \frac{\wq(x)}{\wq(a)}\left(1-q\int_0^{\infty}\mathbb{P}_a(U(t)\in\mathbb{R},\underline{U}(t)\geq 0)dt\right). 
\end{align*}
Then, by the proof Theorem \ref{Resolvents_sd}, for any $x\geq0$ it holds that
\begin{align*}
\lim_{n\to\infty}\left(1-q\int_0^{\infty}\mathbb{P}_x(U^n(t)\in\mathbb{R},\underline{U}_n(t)\geq 0)dt\right)=1-q\int_0^{\infty}\mathbb{P}_x(U(t)\in\mathbb{R},\underline{U}(t)\geq 0)dt.
\end{align*}
Therefore, using Theorems \ref{th:mr.recursion} (i) and \ref{conv_sf} we obtain that
\begin{align*}
\e_x &\left[ \mathrm{e}^{-q \kappa^{0,-}} \ind_{\{\kappa^{0,-}<\kappa^{a,+}\}} \right]\\
&=\lim_{n\to\infty}\left(1-q\int_0^{\infty}\mathbb{P}_x(U^n(t)\in\mathbb{R},\underline{U}_n(t)\geq 0)dt\right)\\
&-\lim_{n\to\infty}\frac{\wq_n(x)}{\wq_n(a)} \left(1-q\int_0^{\infty}\mathbb{P}_a(U^n(t)\in\mathbb{R},\underline{U}_n(t)\geq 0)dt\right)\\
&=\lim_{n\to\infty}\left(z^{(q)}_n(x)-\frac{z_n^{(q)}(a)}{w_n^{(q)}(a)}w_n^{(q)}(x)\right)
=z^{(q)}(x)-\frac{z^{(q)}(a)}{w^{(q)}(a)}w^{(q)}(x).
\end{align*}
\end{proof}
By proceeding in a similar fashion to the proof of the previous result, we obtain the following theorem.
\begin{theorem}{\textbf{(One-sided exit problem)}}\label{one_sided_sd}\\

	\item[(i)] For $x \geq 0$ and $q>0$, 
		\begin{equation}\label{main_onesided_down_sd}
		\e_x \left[ \mathrm{e}^{-q \kappa^{0,-}} \ind_{\{\kappa^{0,-}<\infty\}} \right] =z^{(q)}(x)-C^{(q)}\wq(x),
		\end{equation} 
		where $C^{(q)}:=\lim_{a \to \infty} \frac{z^{(q)}(a)}{\wq(a)}$.
		\item[(ii)]
		For $x\leq a$ and $q \geq 0$,
		\begin{equation}\label{main_onesided_up_sd}
		\e_x \left[ \mathrm{e}^{-q \kappa^{a,+}} \ind_{\{\kappa^{a,+}<\infty\}} \right] =
		\frac{u^{(q)}(x)}{u^{(q)}(a)}.
		\end{equation}
\end{theorem}
\begin{proof}
(i) First, we note that from Theorem \ref{th:mr.recursion_sd} (ii) the ratio $\frac{z^{(q)}(a)}{\wq(a)}$ is monotone in $a$, then taking the limit on both sides in Theorem \ref{th:mr.recursion_sd} (ii) 
we obtain the result.\\
(ii) The result follows as in the proof of Theorem \ref{th:mr.recursion_sd} (i).
\end{proof}

Now, we will compute the ruin probability for the level-dependent L\'evy process $U$ with rate function $\phi$, under the assumption 
that $\mathbb{E}[X(1)]>0$.
Following the considerations from Corollary \ref{ruin_probab},
the ruin function is given by
$$\Psi(x):=\mathbb{P}_x\left(\kappa^{0,-}<\infty\right)=1- \lim_{a\to\infty}\frac{w(x)}{w(a)}.$$
 Notice that the result below is consistent with Corollary
  \ref{ruin_probab}, if $\phi=\phi_k$. For the sake of brevity we leave simple calculations to the reader.
\begin{prop} 
Assume that $x\geq0$.
\begin{itemize}
	\item[(i)] If $\mathbb{E}[X(1)]\leq  0$, then $\Psi(x)=1$ for all $x\geq0$.
  \item[(ii)] If $\mathbb{E}[X(1)]>0$ and 
  $\int_0^\infty \phi(x)w^{\prime}(x)\,dx$ exists, then
  the ruin function
  $$\Psi(x)=1-A^{-1}w(x),$$
  where
  $$A=\frac{1+\int_0^\infty \phi(x)w^{\prime}(x)\,dx}{\mathbb{E}[X(1)]}$$
and $\Psi$ satisfies the following Volterra equation:
  \begin{align*}
  \Psi(x)=1-A^{-1}W(x)+\int_0^xW(x-y)\phi(y)\Psi^{\prime}(y)\md y.
  \end{align*}
  Moreover, when $\int_0^\infty \phi(x)w'(x)\,dx = \infty$ it follows that $A=\infty$, and hence $\Psi=1$. 
\end{itemize}
\end{prop}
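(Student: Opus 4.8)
The plan is to obtain the result as a limit of the corresponding multi-refracted formula from Corollary \ref{ruin_probab}, together with the scale-function convergence in Theorem \ref{conv_sf}, mirroring the computation that was done for the multi-refracted case in the (commented-out) proof of the ruin formula. Recall that $\Psi(x)=\mathbb{P}_x(\kappa^{0,-}<\infty)=\lim_{q\downarrow 0}\e_x[\me^{-q\kappa^{0,-}}\ind_{\{\kappa^{0,-}<\infty\}}]$, so by Theorem \ref{one_sided_sd}(i),
\[
\Psi(x)=1-\lim_{q\downarrow 0}\frac{\frac{q}{\Phi(q)}+\int_0^\infty e^{-\Phi(q)y}\phi(y)z^{(q)\prime}(y)\,dy}{1+\int_0^\infty e^{-\Phi(q)y}\phi(y)w^{(q)\prime}(y)\,dy}\,w^{(q)}(x).
\]
For part (i), if $\mathbb{E}[X(1)]=\psi'(0+)\leq 0$ then $\Phi(0)=\Phi(q)\big|_{q=0}>0$ or $=0$ with $q/\Phi(q)\to\psi'(0+)\le 0$; in either case one checks that the ratio tends to $1$ (using $z^{(0)}\equiv 1$ so $z^{(q)\prime}\to 0$ pointwise, and monotone/dominated convergence for the integrals), giving $\Psi\equiv 1$. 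Alternatively, (i) follows more cheaply: when $\mathbb{E}[X(1)]\le 0$ the driving process $X$ already drifts to $-\infty$ (or oscillates), and since $U(t)\le X(t)+ (\text{const})$ fails in general — rather one uses $U(t)\le X(t)$ when $\phi\ge 0$ and $U(0)=x$, wait, more carefully $U(t) = X(t) - \int_0^t \phi(U(s))\,ds \le X(t)$, so $\underline U(t)\le \underline X(t)$, hence $\{\kappa^{0,-}=\infty\}\subseteq\{\underline X(\infty)\ge 0\}$ which has probability $0$ when $\psi'(0+)\le 0$ (and $x$ fixed, $X$ not a subordinator); thus $\Psi\equiv 1$.

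For part (ii), assume $\mathbb{E}[X(1)]=\psi'(0+)>0$, so $\Phi(0)=0$ and $\lim_{q\downarrow 0}q/\Phi(q)=\psi'(0+)=\mathbb{E}[X(1)]$. First I would show $\lim_{q\downarrow 0}w^{(q)}(x)=w(x)$ and $\lim_{q\downarrow 0}w^{(q)\prime}(x)=w'(x)$: this follows from the explicit Volterra representation \eqref{sf_explicit} together with the known continuity of $q\mapsto W^{(q)}, W^{(q)\prime}$ and dominated convergence applied to the Neumann series $K^{*}$, exactly as in the proof of Theorem \ref{conv_sf} but with the parameter $q$ instead of $n$. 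Similarly $z^{(q)\prime}(y)\to z^{(0)\prime}(y)$, and since $z^{(0)}\equiv 1$ is the solution of \eqref{eq:volterra.z} at $q=0$ (because $Z^{(0)}=1$ and the integral term vanishes), we get $z^{(q)\prime}(y)\to 0$ and $\frac{q}{\Phi(q)}\int_0^\infty e^{-\Phi(q)y}\phi(y)z^{(q)\prime}(y)\,dy\to 0$ provided one controls the integrand; the $\frac{q}{\Phi(q)}$ factor out front contributes $\mathbb{E}[X(1)]$. Hence the numerator tends to $\mathbb{E}[X(1)]$ and the denominator to $1+\int_0^\infty \phi(y)w'(y)\,dy$, giving
\[
\Psi(x)=1-\frac{\mathbb{E}[X(1)]}{1+\int_0^\infty \phi(y)w'(y)\,dy}\,w(x)=1-A^{-1}w(x),\qquad A=\frac{1+\int_0^\infty\phi(y)w'(y)\,dy}{\mathbb{E}[X(1)]}.
\]
When $\int_0^\infty\phi(y)w'(y)\,dy=\infty$ the same limiting argument (now the denominator blows up) forces the subtracted term to $0$, so $\Psi\equiv 1$, i.e. $A=\infty$. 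Finally, the Volterra equation for $\Psi$ is obtained by plugging $\Psi=1-A^{-1}w$ into \eqref{eq:volterra.w} at $q=0$: since $w(x)=W(x)+\int_0^x W(x-y)\phi(y)w'(y)\,dy$ and $W(x)=W^{(0)}(x)$, multiplying by $-A^{-1}$ and adding $1$ gives $\Psi(x)=1-A^{-1}W(x)+\int_0^x W(x-y)\phi(y)\big(-A^{-1}w'(y)\big)\,dy=1-A^{-1}W(x)+\int_0^x W(x-y)\phi(y)\Psi'(y)\,dy$, using $\Psi'=-A^{-1}w'$.

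The main obstacle is the interchange of limits in $q$ with the improper integrals $\int_0^\infty$: one must justify dominated convergence for $\int_0^\infty e^{-\Phi(q)y}\phi(y)w^{(q)\prime}(y)\,dy$ as $q\downarrow 0$ uniformly enough that the limit equals $\int_0^\infty\phi(y)w'(y)\,dy$ (or diverges), and likewise handle the $z^{(q)\prime}$ integral. Here I would invoke Lemma \ref{lemma_convergence_assumptotics_w} and Remark \ref{remark_convergence_assumptotics_w}, which give the exponential decay bounds $w^{(q)\prime}(y)e^{-\delta y}\to 0$ for $\delta>\Phi(q)$, to dominate the integrands on $[0,\infty)$ by an integrable function that is locally uniform in $q$ near $0$, using $\psi'(0+)>0$ so that $\Phi(q)$ stays bounded away from $0$ for $q$ bounded away from $0$ and the hypothesis that $\int_0^\infty\phi(y)w'(y)\,dy$ exists handles the remaining case; the monotone structure (the sequence is increasing in $q$ via the kernel $K^*$) also lets one pass to the limit by monotone convergence when the integral is $+\infty$.
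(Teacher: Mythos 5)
Your plan is correct in outline and arrives at the right formulas, but it takes a genuinely different and noticeably heavier route than the paper's proof. You pass $q\downarrow 0$ in the one-sided formula of Theorem \ref{one_sided_sd}(i), which forces you to control the $q$-dependence of $\Phi(q)$, $q/\Phi(q)$, $w^{(q)\prime}$ and $z^{(q)\prime}$ simultaneously inside two improper integrals over $[0,\infty)$. You correctly flag this interchange of $q$-limit with $\int_0^\infty$ as the main obstacle, but it remains only sketched: in particular, the integrand $e^{-\Phi(q)y}\phi(y)w^{(q)\prime}(y)$ is not monotone in $q$ (the exponential factor decreases in $q$ while $w^{(q)\prime}$ increases), so neither monotone convergence nor the cited exponential-decay lemma gives a domination uniform in $q$ near $0$ without additional work, and the claim that the $z^{(q)\prime}$-integral tends to $0$ similarly needs a quantitative bound beyond pointwise convergence. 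The paper avoids all of this by starting from the relation $\Psi(x)=1-\lim_{a\to\infty}w(x)/w(a)$, which already has $q=0$ baked in (so $z\equiv 1$ with no limit to take), and then computes $\lim_{a\to\infty}w(a)$ directly from the Volterra equation $w(a)=W(a)+\int_0^a W(a-y)\phi(y)w'(y)\,dy$: here the integrand is nonnegative and $W$ is increasing and bounded (by $1/\mathbb{E}[X(1)]$ when $\mathbb{E}[X(1)]>0$), so the passage to the limit is a one-line monotone/dominated-convergence step yielding $w(\infty)=\bigl(1+\int_0^\infty\phi w'\bigr)/\mathbb{E}[X(1)]$, and (i) is immediate from $w(a)\ge W(a)\to\infty$. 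Your second, probabilistic argument for (i) (that $\phi\ge 0$ gives $U(t)\le X(t)$ pathwise, hence $\kappa^{0,-}\le\tau^{0,-}_X$) is a valid and clean alternative that the paper does not use. Your algebraic derivation of the Volterra equation for $\Psi$ at the end matches the paper's. In short: your route works in principle but would require substantially more analysis to make the $q\downarrow 0$ interchanges rigorous; the paper's route buys the result with essentially no limit-interchange work by staying at $q=0$ and sending $a\to\infty$ instead.
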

\begin{proof}
We recall that $w$ satisfies the following integral equation:
\begin{align*}
w(x)=W(x)+\int_0^xW(x-y)\phi(y)w^{\prime}(y) \md y.
\end{align*}
Hence, if $\mathbb{E}[X(1)]\leq0$ then $\lim_{a\to\infty}W(a)=\infty$, which using the fact that $w(a)\geq W(a)$, for every $a\geq 0$ implies that
\[
\lim_{x\to\infty}w(x)=\infty.
\]
On the other hand, if $\mathbb{E}[X(1)]>0$ then $\lim_{a\to\infty}W(a)=1/\mathbb{E}[X(1)]$, and hence
\begin{equation}
\lim_{a\to\infty}w(a)=\frac{1}{\mathbb{E}[X(1)]}\left(1+\int_0^{\infty}\phi(y)w^{\prime}(y)\md y\right).
	\end{equation}
Finally, we note that
\begin{align*}
\Psi(x)=1-A^{-1}w(x)&=1-A^{-1}\left(W(x)+\int_0^xW(x-y)\phi(y)w^{\prime}(y) \md y\right)\\
&=1-A^{-1}W(x)+\int_0^xW(x-y)\phi(y)\Psi^{\prime}(y)\md y.
\end{align*}
\end{proof}

\begin{example}[Ornstein-Uhlenbeck type process]
For a general spectrally negative L\'evy process $X$ we consider the particular case when $\phi(x)=x\ind_{\{x> 0\}}$ 
in SDE \eqref{sde1}. We refer to the process $U$, given as the unique solution to the following stochastic differential equation,
\begin{align*}
U(t)=X(t)-\int_0^tU(s)\ind_{\{U(s)>0\}}ds,\ \text{$t\geq 0$,}
\end{align*}
as an Ornstein-Uhlenbeck type process. \\

Notice that for fluctuation identities with $0\le d<a$, $U$ evolves as a standard L\'evy-driven
Ornstein-Uhlenbeck process. Such processes were considered in the literature, e.g. by Hadjiev \cite{hadjiev}, Novikov \cite{novikov} and Patie \cite{patie1}, in which the Laplace transforms of the first passage times were found.\\


Following Proposition \ref{prop:solutions},  we have for $d\in\mathbb{R}$,
\begin{align}\label{uo_sf}
w^{(q)\prime}(x;d)&=\Xi_{\phi}(x)^{-1}W^{(q)\prime}((x-d)+)+\int_d^x K^{\dagger}(x,y)\Xi_{\phi}(x)^{-1}W^{(q)\prime}(y-d)\,dy\notag\\
z^{(q)\prime}(x)&= \Xi_{\phi}(x)^{-1} qW^{(q)}(x)+\int_0^x K^{\dagger}(x,y)\Xi_{\phi}(x)^{-1}qW^{(q)}(y)\,dy,
\end{align}
where for $d\leq y<x$, $\Xi_{\phi}(x)=1-xW^{(q)}(0)\ind_{\{x>0\}}$, $K^{\dagger}(x,y)=\sum_{l=1}^\infty K^{(l)}(x,y)$,
\begin{align*}
K^{(1)}(x,y)&=\Xi_{\phi}(x)^{-1} y\ind_{\{y>0\}} W^{(q)\prime}((x-y)+), \text{and},\\ K^{(l+1)}(x,y)&=\int_y^xK^{(l)}(x,w)K(w,y)\,dw \ \text{for $l\geq 1$.}
\end{align*}
Hence, the scale functions are given by
\begin{align*}
w^{(q)}(x;d)=W^{(q)}(0)+\int_d^xw^{(q)\prime}(y;d)dy, \ \text{and} \ z^{(q)}(x) =1+\int_0^xz^{(q)\prime}(y)dy .
\end{align*}
We remark that the representation of the functions $w^{(q)}(\cdot,d)$ and $z^{(q)}(\cdot)$ given in \eqref{uo_sf} using the kernel $K^{\dagger}$, provides a natural scheme to compute the scale functions numerically.
\end{example}

\appendix
\section{Proofs for the bounded variation case}\label{BVcase}

In this appendix, we prove Theorems \ref{th:mr.recursion}(i) and \ref{Resolvents}(i) for the case that $X$ has paths of bounded variation. 
The proofs for the case of unbounded variation are deferred to Appendix \ref{UBVcase}.

\subsection{Proof of Theorem \ref{th:mr.recursion}(i) }
The proof is inductive. First, it is easy to check that for $k=1$, formula (\ref{main_twosided_up}) agrees with (\ref{two-sided_kypr&loeffen}). 

 Now, we shall assume that equation (\ref{main_twosided_up}) holds true for $k-1$, and show that it also holds for $k$.  
We let $p(x;\delta_1,...,\delta_k) :=\e_x \left[ \mathrm{e}^{-q \kappa_k^{a,+}} \ind_{\{\kappa_k^{a,+}< \kappa_k^{d,-}\}} \right]$. 
We follow the main idea given in Theorem 16 of \cite{kyprianouloeffen2010}. 
For $x \leq b_k$, it follows from the strong Markov property and the assumption that the identity holds for $k-1$ that
\begin{equation}\label{under_b_k}
p(x;\delta_1,...,\delta_k)=\e_x \left[ \mathrm{e}^{-q \kappa_{k-1}^{b_k,+}} \ind_{\{\kappa_{k-1}^{b_k,+}< \kappa_{k-1}^{d,-}\}} \right]
p(b_k;\delta_1,...,\delta_k) =\frac{\wq_{k-1}(x;d)}{\wq_{k-1}(b_k;d)} p(b_k;\delta_1,...,\delta_k). 
\end{equation}

For $b_k \leq x \leq  a$, by applying  the strong Markov property once more and considering equation (\ref{under_b_k}) and the expectation given in Theorem 23 (iii) of
\cite{kyprianouloeffen2010}, we obtain that 

\begin{eqnarray}\label{above_b_k}
\lefteqn{p(x;\delta_1,...,\delta_k)= \e_x \left[ \mathrm{e}^{-q \tau_k^{a,+}} \ind_{\{\tau_k^{a,+}< \tau_k^{b_k,-}\}} \right]+\e_x \left[ \mathrm{e}^{-q \kappa_k^{a,+}} \ind_{\{\kappa_k^{a,+}< \kappa_k^{d,-}\}} 
\ind_{\{\tau_k^{a,+}> \tau_k^{b_k,-}\}} \right]}\nonumber\\&&=
\frac{W_k^{(q)}(x-b_k)}{W_k^{(q)}(a-b_k)}+\e_x \left[ \mathrm{e}^{-q \tau_k^{b_k,-}} \ind_{\{\tau_k^{b_k,-}< \tau_k^{a,+}\} } 
p(U_k(\tau_k^{b_k,-});\delta_1,...,\delta_k) \right] \nonumber\\&&=
\frac{W_k^{(q)}(x-b_k)}{W_k^{(q)}(a-b_k)}+\frac{p(b_k;\delta_1,...,\delta_k)}{\wq_{k-1}(b_k;d)}\e_x \left[ \mathrm{e}^{-q \tau_k^{b_k,-}} \ind_{\{\tau_k^{b_k,-}< \tau_k^{a,+}\} } \wq_{k-1}(X_k(\tau_k^{b_k,-});d) \right]\nonumber\\\nonumber&&=
\frac{W_k^{(q)}(x-b_k)}{W_k^{(q)}(a-b_k)}+\frac{p(b_k;\delta_1,...,\delta_k)}{\wq_{k-1}(b_k;d)}
\int_{0}^{a-b_k}\int_{(y,\infty)} \wq_{k-1}(b_k+y-\theta;d) \\&& \quad \times 
\left[\frac{W_k^{(q)}(x-b_k)W_k^{(q)}(a-b_k-y)}{W_k^{(q)}(a-b_k)}-W_k^{(q)}(x-b_k-y)\right] \Pi(\textrm{d}\theta)\textrm{d}y. \end{eqnarray}

Now,  by setting $x=b_k$ in (\ref{above_b_k}) and using the fact that $W_k^{(q)}(0)=\frac{1}{c-(\delta_1+...+\delta_k)}$,  we obtain 
\begin{eqnarray}\label{p_for_b_k}
\lefteqn{p(b_k;\delta_1,...,\delta_{k-1},\delta_k)=\wq_{k-1}(b_k;d)\Big\{(c-\delta_1-\dots -\delta_k)W_k^{(q)}(a-b_k)\wq_{k-1}(b_k;d)}\nonumber\\&&-\int_{0}^{a-b_k}\int_{(y,\infty)}
  \wq_{k-1}(b_k+y-\theta;d) W_k^{(q)}(a-b_k-y)\Pi(\textrm{d}\theta)\textrm{d}y \Big\}^{-1}. 
\end{eqnarray}

We shall now simplify this expression.

First, if we take $\delta_k=0$, then by the inductive  hypothesis we obtain that
\begin{eqnarray}\label{p_for_delta=0}
p(b_k;\delta_1,...,\delta_{k-1},0)=\e_{b_k} \left[ \mathrm{e}^{-q \kappa_{k-1}^{a,+}} \ind_{\{\kappa_{k-1}^{a,+}< \kappa_{k-1}^{d,-}\}} \right]=\frac{\wq_{k-1}(b_k;d)}{\wq_{k-1}(a;d)}.
\end{eqnarray}
Then, it follows from (\ref{p_for_b_k}) and (\ref{p_for_delta=0}) and the fact that $W_k = W_{k-1}$ when $\delta_k = 0$ that
\begin{multline}\label{integral_for_delta=0}
\int_{0}^{a-b_k}\int_{(y,\infty)} \wq_{k-1}(b_k+y-\theta;d) W_{k-1}^{(q)}(a-b_k-y)\;\Pi(\textrm{d}\theta)\;\textrm{d}y\\
=\left(c-\sum_{i=1}^{k-1} \delta_i\right)W_{k-1}^{(q)}(a-b_k)\wq_{k-1}(b_k;d)-\wq_{k-1}(a;d).
\end{multline}
Let $\lambda > \varphi_{k}(q) $.
As $a\geq b_k$ is taken arbitrarily, we set $a=u$ and take the Laplace transforms from $b_k$ to $\infty$ of both sides of (\ref{integral_for_delta=0}). 
By Fubini's theorem, the Laplace transform of the left-hand side of (\ref{integral_for_delta=0}) becomes

\begin{multline} \label{eqn_kazu_referenced}
\int_{b_k}^{\infty}e^{-\lambda u}\int_{0}^{\infty}\int_{(y,\infty)} \wq_{k-1}(b_k+y-\theta;d) W_{k-1}^{(q)}(u-b_k-y)\;\Pi(\textrm{d}\theta) \;\textrm{d}y\;\textrm{d}u\\=
\frac{e^{-\lambda b_k}}{\psi_{k-1}(\lambda)-q} 
\int_{0}^{\infty} e^{-\lambda y} \int_{y}^{\infty} \wq_{k-1}(b_k+y-\theta;d) \;\Pi(\textrm{d}\theta) \;\textrm{d}y.
\end{multline}

On the other hand, the Laplace transform of the right-hand side of (\ref{integral_for_delta=0}) becomes
\begin{multline*} 
\int_{b_k}^{\infty}e^{-\lambda u} \left((c-\delta_1-...-\delta_{k-1})W_{k-1}^{(q)}(u-b_k)\wq_{k-1}(b_k;d)-\wq_{k-1}(u;d)\right)\;\textrm{d}u\\=
\frac{e^{-\lambda b_k}(c-\delta_1-...-\delta_{k-1})}{\psi_{k-1}(\lambda)-q}\wq_{k-1}(b_k;d)-\int_{b_k}^{\infty} e^{-\lambda u} \wq_{k-1}(u;d) 
\;\textrm{d}u.
\end{multline*}

Hence, by matching these, we obtain 

\begin{multline} \label{integrals1}
\int_{0}^{\infty}  \int_{(y, \infty)} e^{-\lambda y} \wq_{k-1}(b_k+y-\theta;d) \;\Pi(\textrm{d}\theta)\; \textrm{d}y\\=
\left(c- \sum_{i=1}^{k-1}\delta_i\right)\wq_{k-1}(b_k;d)-\left(\psi_{k-1}(\lambda)-q\right) e^{\lambda b_k}\int_{b_k}^{\infty} e^{-\lambda u} \wq_{k-1}(u;d) \;\textrm{d}u.
\end{multline}

Now  using \eqref{eqn_kazu_referenced} and (\ref{integrals1}) and Fubini's theorem we obtain that 

\begin{eqnarray}\label{integrals2}\nonumber
\int_{b_k}^{\infty}e^{-\lambda u} \int_{0}^{\infty}\int_{(y,\infty)} \wq_{k-1}(b_k+y-\theta;d) W_k^{(q)}(u-b_k-y)\;\Pi(\textrm{d}\theta)\;\textrm{d}y \; \textrm{d}u\\\nonumber=
\frac{e^{-\lambda b_k}}{ \psi_{k}(\lambda)-q}\Big((c-\delta_1-...-\delta_{k-1}) \wq_{k-1}(b_k;d)\\-\left(\psi_{k-1}(\lambda)-q\right) 
e^{\lambda b_k}\int_{b_k}^{\infty} e^{-\lambda u} \wq_{k-1}(u;d) \;\textrm{d}u\Big). 
\end{eqnarray}

Finally, inversion of the Laplace transform (\ref{integrals2}) with respect to $\lambda$ 
(for details we refer the reader to p. 34 of \cite{kyprianouloeffen2010}) gives that, for all $u\geq b_k$,
\begin{multline}\label{integrals3}
 \int_{0}^{\infty}\int_{(y,\infty)} \wq_{k-1}(b_k+y-\theta;d) W_k^{(q)}(u-b_k-y)\;\Pi(\textrm{d}\theta)\;\textrm{d}y \\=
\left(c-\sum_{i=1}^{k-1} \delta_i\right) \wq_{k-1}(b_k;d)  W_k^{(q)}(u-b_k) - \wq_{k-1}(u;d)
-\delta_k \int^{x}_{b_{k}}W_k^{\left(q\right)}(u-y)w_{k-1}^{\left(q\right)'}(y;d)\;\md y.
\end{multline}

Then by using the above formula together with (\ref{p_for_b_k}), we compute that 

\begin{equation}\label{p_for_b_k2}
 p(b_k;\delta_1,...,\delta_k)=\wq_{k-1}(b_k;d)\Big\{
 \wq_{k-1}(a;d) -\delta_k \int^{a}_{b_{k}}W_k^{\left(q\right)}(a-y)w_{k-1}^{\left(q\right)'}(y;d)\md y\Big\}^{-1}.
\end{equation}

Finally, putting  (\ref{p_for_b_k2}) into (\ref{under_b_k}) and (\ref{above_b_k}) gives the equation (\ref{main_twosided_up}).  
\begin{flushright}$\blacksquare$\end{flushright}

\begin{remark}\label{useful_identity}
It is easy to see that from equation (\ref{integrals3}) we obtain the following identity, which will be crucial for the remainder of the paper:
\begin{align}\label{integrals4}\nonumber
 &\int_{0}^{a-b_k}\int_{(y,\infty)} \wq_{k-1}(b_k+y-\theta;d)\left[\frac{W_k^{(q)}(x-b_k)W_k^{(q)}(a-b_k-y)}{W_k^{(q)}(a-b_k)}-W_k^{(q)}(x-b_k-y)\right] 
\Pi(\textrm{d}\theta)\;\textrm{d}y \\\nonumber & \qquad=
 -\frac{W_k^{(q)}(x-b_k)}{W_k^{(q)}(a-b_k)}\left(\wq_{k-1}(a;d)+\delta_k \int^{a}_{b_{k}}W_k^{\left(q\right)}(a-y)w_{k-1}^{\left(q\right)'}(y;d)\md y\right)\\ & \qquad+
\wq_{k-1}(x;d)+\delta_k \int^{x}_{b_{k}}W_k^{\left(q\right)}(x-y)w_{k-1}^{\left(q\right)'}(y;d)\;\md y.
\end{align}

\end{remark}

\subsection{Proof of Theorem \ref{Resolvents}(i).} 
The proof is inductive. Using equation (\ref{w_k_above_b_k}) it is easy to check for $k=1$ that the formula (\ref{resolvent1}) 
agrees with \cite{kyprianouloeffen2010} {Theorem 6(i)}. Now, we assume that equation (\ref{resolvent1}) 
holds true for $k-1$. By the strong Markov property, we have for $x\leq b_k$ that
\begin{eqnarray}\label{resolvent_under_b_k}\nonumber
\lefteqn{V^{(q)}(x, dy):=\e_x\left[\int_0^{\kappa_k^{a,+}\wedge \kappa_k^{d,-}}e^{-qt}\ind_{\{U_k(t)\in \;dy\}}\;dt\right]}\\\nonumber &=&
\e_x\left[\int_0^{\kappa_{k-1}^{b_k,+}\wedge \kappa_{k-1}^{d,-}} e^{-qt}\ind_{\{U_{k-1}(t)\in \;dy\}}\;dt\right]+ 
\e_x \left[ \mathrm{e}^{-q \kappa_{k-1}^{b_k,+}} \ind_{\{\kappa_{k-1}^{b_k,+}< \kappa_{k-1}^{d,-}\}} \right]V^{(q)}(b_k, dy) \\\nonumber &=&
 \sum_{i=0}^{k-1} \frac{\frac{w_{k-1}^{(q)}(x;d)}{w_{k-1}^{(q)}(b_k;d)}w_{k-1}^{(q)}(b_k;y)-w_{k-1}^{(q)}(x;y)}
{\Xi_{\phi_i}(y)}\ind_{\{y\in (b_i,b_{i+1}]\}}dy+ \frac{w_{k-1}^{(q)}(x;d)}{w^{(q)}_{k-1}(b_k;d)}V^{(q)}(b_k, dy),\\
\end{eqnarray}
where the last equality holds by the inductive hypothesis and Theorem \ref{th:mr.recursion}(i).
 
 For $b_k\leq x\leq a$, we have that
\begin{align*}
&V^{(q)}(x, dy)\\&=
\e_x\left[\int_0^{\tau_{k}^{b_k,-} \wedge \tau_{k}^{a,+} } e^{-qt}\ind_{\{X_k(t)\in dy \}}dt\right] + \e_x\left[ e^{-q \tau_{k}^{b_k,-} }\ind_{\{ \tau_{k}^{b_k,-} <\tau_{k}^{a,+}\}} V^{(q)}\left(X_k(\tau_{k}^{b_k,-} ), dy \right)\right].
 \end{align*}
 The former expectation on the right-hand side is equal to
 \begin{multline*}
\int_0^{\infty} e^{-qt} \p_x\left(X_k(t)\in dy, y\in [b_k,a], t<\tau_{k}^{b_k,-} \wedge \tau_{k}^{a,+}\right) \;dt  \\
=\left\{\frac{W_{k}^{(q)}(x-b_k)}{W_{k}^{(q)}(a-b_k)}W_{k}^{(q)}(a-y)-W_{k}^{(q)}(x-y)\right\}\ind_{\{y\in (b_k,a]\}}\;dy,
\end{multline*}
while by \eqref{resolvent_under_b_k}, the latter becomes
\begin{align*}
&\int_0^{\infty}\int_z^{\infty} V^{(q)}(z-\theta, dy) 
\left[\frac{W_{k}^{(q)}(x-b_k)}{W_{k}^{(q)}(a-b_k)}W_{k}^{(q)}(a-b_k-z)-W_{k}^{(q)}(x-b_k-z)\right]\;\Pi(d\theta)\;dz  \\
&=\Bigg\{\int_0^{\infty}\int_z^{\infty}\sum_{i=0}^{k-1}\frac{1}{\Xi_{\phi_i}(y)} \left( \frac{w_{k-1}^{(q)}(z-\theta+b_k;d)}
{w_{k-1}^{(q)}(b_k;d)}w_{k-1}^{(q)}(b_k;y)-w_{k-1}^{(q)}(z-\theta+b_k;y)\right)
\ind_{\{y\in (b_i,b_{i+1}]\}} \\ \quad &\times 
\left[\frac{W_{k}^{(q)}(x-b_k)}{W_{k}^{(q)}(a-b_k)}W_{k}^{(q)}(a-b_k-z)-W_{k}^{(q)}(x-b_k-z)\right]\;\Pi(d\theta)\;dz
\Bigg\} \;dy \\ &+
\int_0^{\infty}\int_z^{\infty}\frac{w_{k-1}^{(q)}(z-\theta+b_k;d)}{w^{(q)}_{k-1}(b_k;d)}V^{(q)}(b_k, dy)  \\
&\times\left[\frac{W_{k}^{(q)}(x-b_k)}{W_{k}^{(q)}(a-b_k)}W_{k}^{(q)}(a-b_k-z)-W_{k}^{(q)}(x-b_k-z)\right]\;\Pi(d\theta)\;dz.
\end{align*}

Next, we use Remark \ref{useful_identity} to simplify the expression for $V^{(q)}(x, dy)$ for $b_k \leq x \leq a$ as follows:
\begin{eqnarray}\label{resolvent_simply}\nonumber
\lefteqn{V^{(q)}(x,dy)=
\left\{\frac{W_{k}^{(q)}(x-b_k)}{W_{k}^{(q)}(a-b)}W_{k}^{(q)}(a-y)-W_{k}^{(q)}(x-y)\right\} \ind_{\{y\in (b_k,a]\}}\;dy}
\\\nonumber &+&
\Bigg\{\sum_{i=0}^{k-1}\Bigg[ \frac{w_{k-1}^{(q)}(b_k;y)}{w_{k-1}^{(q)}(b_k;d)}\left(- \frac{W_k^{(q)}(x-b_k)}{W_k^{(q)}(a-b_k)}\wq_{k}(a;d)+\wq_{k}(x;d)\right)\\\nonumber &+&
\frac{W_k^{(q)}(x-b_k)}{W_k^{(q)}(a-b_k)}\wq_{k}(a;y)-\wq_{k}(x;y)\Bigg\}\frac{\ind_{\{y\in (b_i,b_{i+1}]\}}}{\Xi_{\phi_i} (y)}\; dy \\&+&
\frac{V^{(q)}(b_k, dy)}{w^{(q)}_{k-1}(b_k)} \left( - \frac{W_k^{(q)}(x-b_k)}{W_k^{(q)}(a-b_k)}\wq_{k}(a;d)+\wq_{k}(x;d) \right).
\end{eqnarray}
Finally, setting $x=b_k$ in (\ref{resolvent_simply}) and using the fact that $w^{(q)}_{k}(b_k)=w^{(q)}_{k-1}(b_k)$ leads us to an 
explicit formula for $V^{(q)}(b_k,dy)$. Then, putting the expression for $V^{(q)}(b_k,dy)$ into (\ref{resolvent_simply})
 gives us that
\begin{align*}
&\e_x\left[\int_0^{\kappa_k^{a,+}\wedge \kappa_k^{d,-}}e^{-qt}\ind_{\{U_k(t)\in dy\}}dt\right]
= \sum_{i=0}^{k} \frac{\frac{w_k^{(q)}(x;d)}{w_k^{(q)}(a;d)}w_k^{(q)}(a;y)-w_k^{(q)}(x;y)}
{\Xi_{\phi_i}(y)}\ind_{\{y\in (b_i,b_{i+1}]\cap (-\infty, a)\}} dy\\
&= \Xi_{\phi_k}(y)^{-1} \left\{\frac{w_k^{(q)}(x;d)}{w_k^{(q)}(a;d)}w_k^{(q)}(a;y)-w_k^{(q)}(x;y)
\right\} \ind_{\{y \in (-\infty, a)\}} dy. \textrm{\hspace{5cm}} \blacksquare
\end{align*}

\section{Proofs for the unbounded variation case}\label{UBVcase}

In this appendix, we first show the existence of the multi-refracted L\'evy process $U_k$ (as in Theorem \ref{lem:mr.exist}) 
for the case that $X$ is of unbounded variation. Together with Lemma \ref{lemma_uniqueness}, this implies that $U_k$ is the unique strong solution of 
the SDE \eqref{procU_n}. We then prove Theorems \ref{th:mr.recursion}(i) and \ref{Resolvents}(i) for the case of unbounded variation.

\subsection{Existence}
 Now, we will prove  that there exists a process $U_k$ that is a solution
	to	\eqref{procU_n} under the assumption that $X$ has unbounded variation paths. To this end, we will use an approximation 
	method as in \cite{kyprianouloeffen2010}. First, recall that it is known (see, e.g., Bertoin \cite{bertoin1996} or Kyprianou and Loeffen
		\cite{kyprianouloeffen2010}, Lemma 12) that for any spectrally negative L\'evy process $X$ of unbounded variation 
		we can find a sequence of bounded variation L\'evy processes $X^{(n)}$ such that, for
		each $t>0$,
		\[
		\lim_{n\to\infty}\sup_{s\in[0,t]}|X^{(n)}(s)-X(s)|=0 \quad a.s.
		\]
		Let the constants $t > 0$ and  $\eta>0$ be fixed, and let $N \in \mathbb{N}$ be sufficiently large that for all $n,m \geq N ,$ 
		$\sup_{s \in [0,t]} |X^{(n)}(s)-X^{(m)}(s)| \leq \eta$. 
		
		Now, by $U^{(n)}_k$ we denote the sequence of $k$-multi-refracted pathwise solutions associated with $X^{(n)}$ for $n \geq 1$. 
		That is the solutions to the following SDE:
		\begin{align*}
			dU_k^{(n)}(t)=dX^{(n)}(t)-\sum_{i=1}^k\delta_i\ind_{\{U_k^{(n)}(t)>b_i\}}dt.
		\end{align*}
		
	The next step is to prove that
		\begin{equation}\label{convergence_U_k}
		\lim_{n,m\to\infty}\sup_{s\in[0,t]}|U_k^{(n)}(s)-U_k^{(m)}(s)|=0 \quad a.s.,
		\end{equation}
		from which we deduce that $\{U_k^{(n)}(s)\}_{s\in[0,t]}$ is a Cauchy sequence in the Banach space consisting of c\'adl\'ag mappings 
		equipped with the supremum norm.
To this end, for any $k\geq 1$, we adopt the reasoning given in the proof of Lemma 12 of \cite{kyprianouloeffen2010}, because in our case we have that
\begin{eqnarray*}
	\lefteqn{	A^{(n,m)}_k(s):= (U_k^{(n)}(s) - U_k^{(m)}(s))-(X^{(n)}(s)-X^{(m)}(s))}\\
	&=&\sum_{i=1}^k\delta_i \int_0^s \left(\ind_{\{U_k^{(m)}(\upsilon)>b_i, U_k^{(n)}(\upsilon)\leq b_i\}}-
		\ind_{\{U_k^{(m)}(\upsilon)\leq b_i, U_k^{(n)}(\upsilon)>b_i\}} \right) \, d\upsilon.
		\end{eqnarray*}
		Then, taking  advantage of the fact that $0<\delta_1,...,\delta_k$, we show as in \cite{kyprianouloeffen2010} that \\
		$\sup_{s \in [0,t]} |A^{(n,m)}_k(s)| \leq \eta$. Thus, we have shown that 
		$$\lim_{n \rightarrow \infty}U_k^{(n)}(t)=U_k^{(\infty)}(t) \quad \textrm {a.s.}$$
		for a stochastic process $U_k^{(\infty)}=\{U_k^{(\infty)}(t): t\geq 0\}$.
		Now, repeating the reasoning presented in Lemma 21 of \cite{kyprianouloeffen2010} 
		(using the fact that the resolvent obtained in Theorem \ref{Resolvents} for the bounded variation case has a density), 
		we show that for all driving L\'evy processes $X$ with paths of unbounded variation when $x$ is fixed we have for $1\leq j \leq k$ that 
\begin{equation}\label{at_b_j}
\mathbb{P}_x(U_k^{(\infty)}(t)= b_j) = 0 \textrm{ for almost every $t \geq  0$}.
\end{equation}
Finally, using \eqref{at_b_j} we obtain that 
\begin{align*}
			&\lim_{n \rightarrow \infty}U_k^{(n)}(t)=\lim_{n \rightarrow \infty}\Big(X^{(n)}(t)-\sum_{i=1}^k\delta_i \int_0^t 
			\ind_{\{U_k^{(n)}(s)>b_i\}}ds \Big) \\&  = X(t)-\sum_{i=1}^k\delta_i \int_0^t \ind_{\{U^{(\infty)}_k(s)>b_i\}}ds \quad a.s.,
		\end{align*}
which gives us that $U_k:=U_k^{(\infty)}$ solves \eqref{procU_n}, as desired.

\subsection{Proof of Theorem \ref{Resolvents}(i) }
Following the argument given in the proof of Lemma \ref{lem:mr.exist}, we conclude that for any $k\in\mathbb{N}$ the sequence $(U^{(n)}_k)_{n\geq 1}$ 
converges uniformly on compact sets to the process $U_k$.

Consider now for any $k\geq 1$ the sequence of scale functions $(w^{(q),(n)}_k)_{n\geq 1}$ 
 associated with the sequence of $k$-multi-refracted 
L\'evy processes $U^{(n)}_k$. In the next step, we will prove the convergence of $(w^{(q),(n)}_k)_{n\geq 1}$  to the scale function $w^{(q)}_k$ 
defined in \eqref{eq:mr.recursion}.
Therefore, for the case that $k=1$ we have by the proof of Lemma 20 in \cite{kyprianouloeffen2010} that, for any $x\geq d$,
\begin{align}\label{sf_ub_1}
\lim_{n\to\infty}w^{(q),(n)}_1(x;d)&=\lim_{n\to\infty}W^{(q),(n)}(x-d)+\delta_1\int_{b_1}^{x}W_1^{(q),(n)}(x-y)W^{(q),(n)\prime}(y-d)\;dy\notag\\
&=W^{(q)}(x-d)+\delta_1\int_{b_1}^{x}W_1^{(q)}(x-y)W^{(q)\prime}(y-d)\;dy=w_1^{(q)}(x;d),
\end{align}
where $W^{(q),(n)}$ ($W^{(q)}$) and $W_1^{(q),(n)}$ ($W_1^{(q)}$) are the scale functions associated with the processes $(X^{(n)}(t))_{t\geq0}$ ($(X(t))_{t\geq0}$) and 
$(X_1^{(n)}(t):=X^{(n)}(t)-\delta_1 t)_{t\geq0}$ ($(X_1(t))_{t\geq0}$), respectively.  
\par For each $k>1$ and $x\geq d$, we define the function $w^{(q),(n)}_k$ by the following recursion:
\begin{align}\label{req_w^{(q),(n)}}\nonumber
 &w^{(q),(n)}_k(x;d):= w^{(q),(n)}_{k-1}(x;d)+\delta_{k}\int^{x}_{b_{k}}W_k^{(q),(n)}(x-y)w_{k-1}^{(q),(n)\prime}(y;d)\md y\\\nonumber&=
w^{(q),(n)}_{k-1}(x;d)+\delta_k W_k^{(q),(n)}(0)w^{(q),(n)}_k(x;d)-\delta_k w_{k-1}^{(q),(n)}(b_k;d)W_k^{(q)}(x-b_k)\\&
+\delta_k \int_{b_k}^x W_k^{(q),(n)\prime}(x-y)w_{k-1}^{(q),(n)}(y;d)\;dy,
\end{align}
where the last equality follows from integration by parts. 
Assume now for the inductive step  that for any $x\geq d$ it holds that
\begin{align*}
\lim_{n\to\infty}w^{(q),(n)}_{k-1}(x;d)=w_{k-1}^{(q)}(x;d).
\end{align*}
We will prove that the above limit holds for $k$. To this end, we recall that the scale function $w^{(q),(n)}_k$ for the process $U^{(n)}_k$ 
satisfies the recurrence relation \eqref{req_w^{(q),(n)}}, where $W^{(q),(n)}_k$ is the scale function of the process 
$(X_k^{(n)}(t):=X^{(n)}(t)-\sum_{j=1}^k\delta_j t)_{t\geq0}$.
Now, by the proof of Lemma 20 in \cite{kyprianouloeffen2010} (see also Remark 3.2 of \cite{perezyamazaki2017}) 
we have that $W^{(q),(n)}_k$ and $W^{(q),(n)\prime}_k$ converge to $W^{(q)}_k$ and $W^{(q)\prime}_k$, respectively. 
Hence by the dominated convergence theorem we have for any $x\in \mathbb{R}$ and $x\geq d$ that
\begin{align*}
\lim_{n\to\infty}w^{(q),(n)}_k(x;d)&=\lim_{n\to\infty}\Bigg(w_{k-1}^{(q),(n)}(x;d)+\delta_k W_k^{(q),(n)}(0)w^{(q),(n)}_k(x;d)\\&
-\delta_kw_{k-1}^{(q),(n)}(b_k;d)W_k^{(q),(n)}(x-b_k)
+\delta_k \int_{b_k}^xW_k^{(q),(n)\prime}(x-y)w_{k-1}^{(q),(n)}(y;d)\;dy\Bigg)\\
&=w^{(q)}_{k-1}(x;d)-\delta_kw_{k-1}^{(q)}(b_k;d)W_k^{(q)}(x-b_k)+\delta_k\int_{b_k}^xW_k^{(q)\prime}(x-y)w_{k-1}^{(q)}(y;d)\;dy\\
&= w^{(q)}_{k-1}(x;d)+\delta_{k}\int^{x}_{b_{k}}W_k^{(q)}(x-y)w_{k-1}^{(q)\prime}(y;d)\md y\\
&=w^{(q)}_{k}(x;d).
\end{align*}
Therefore, we have that for any $k\geq 1$ the sequence $(w^{(q),(n)}_k)_{n\geq 1}$ converges pointwise to $w_k^{(q)}$.
\par If we denote by $V^{(q),(n)}_k$ the resolvent measure of the process $U^{(q),(n)}_k$ killed when exiting the interval $[0,a]$, 
then for each $n\geq 1$ and any open interval $\mathcal{B}\subset [0,\infty]$ we have
\begin{align*}
V^{(q),(n)}(x,\mathcal{B}):=\int_0^{\infty}e^{-qt}\mathbb{P}_x\left(U^{(n)}_k(t)\in \mathcal{B}, \overline{U}^{(n)}_k(t)\leq a,
 \underline{U}^{(n)}_k(t)\geq 0\right)dt,
\end{align*}
where $\overline{U}^{(n)}_k(t):=\sup_{0\leq s\leq t}U^{(n)}_k(s)$ and $\underline{U}^{(n)}_k(t):=\inf_{0\leq s\leq t}U^{(n)}_k(s)$.
\par As in the proof of Theorem 6 in \cite{kyprianouloeffen2010}, the uniform convergence on compact sets of the sequence 
$(U^{(n)}_k)_{n\geq 1}$ to $U_k$ implies that for each $t>0$, it holds a.s.~that 
\[
\lim_{n\uparrow \infty}((U^{(n)}_k(t),\overline{U}^{(n)}_k(t),\underline{U}^{(n)}_k(t))=((U_k(t),\overline{U}_k(t),\underline{U}_k(t)).
\]
Using the ideas in the proof of Theorem 6 in \cite{kyprianouloeffen2010}, it is not difficult to see that  $\mathbb{P}_x(U_k(t)\in\partial \mathcal{B})=0$.
\par Hence by using the convergence of the sequence $(w^{(q),(n)}_k)_{n\geq1}$ we obtain that
\begin{align*}
V^{(q)}(x,\mathcal{B}):&=\int_0^{\infty}e^{-qt}\mathbb{P}_x\left(U_k(t)\in \mathcal{B}, \overline{U}_k(t)\leq a, \underline{U}_k(t)\geq 0\right)dt\\
&=\lim_{n\to\infty}\int_0^{\infty}e^{-qt}\mathbb{P}_x\left(U^{(n)}_k(t)\in \mathcal{B}, \overline{U}^{(n)}_k(t)\leq a, \underline{U}^{(n)}_k(t)\geq 0\right)dt\\
&=\lim_{n\to\infty}\int_{\mathcal{B}}\left(\sum_{i=0}^{k} \frac{\frac{w_k^{(q),(n)}(x)}{w_k^{(q),(n)}(a)}w_k^{(q),(n)}(a;y)-w_k^{(q),(n)}(x;y)}
{\prod_{j=1}^i\left(1-\delta_j W_{j-1}^{(q),(n)}(0)\right)}\ind_{\{y\in [b_i,b_{i+1}]\}} \right)\;dy\\
&=\int_{\mathcal{B}}\left(\sum_{i=0}^{k} \frac{\frac{w_k^{(q)}(x)}{w_k^{(q)}(a)}w_k^{(q)}(a;y)-w_k^{(q)}(x;y)}
{\prod_{j=1}^i\left(1-\delta_j W_{j-1}^{(q)}(0)\right)}\ind_{\{y\in [b_i,b_{i+1}]\}} \right)\;dy \\
&=\int_{\mathcal{B}} \Xi_{\phi_k}^{-1}(y)\left(\frac{w_k^{(q)}(x)}{w_k^{(q)}(a)}w_k^{(q)}(a;y)-w_k^{(q)}(x;y)\right)\;dy,
\end{align*}
which proves the identity (\ref{resolvent1}) for the unbounded variation case. \begin{flushright}$\blacksquare$\end{flushright}

\section{Proofs for the general case}\label{Generalcase}
\subsection{Proof of Theorem \ref{Resolvents}(ii)-(iv)} 
We start by providing the proof of identity \eqref{resolvent2}.
To compute the resolvent of the first passage time below $0$, we take $d=0$ and $a \to \infty$ in \eqref{resolvent1}. 
To this end, we note that from Lemma \ref{lemma_convergence_assumptotics_w} we get the following:
\begin{align*}
\lim_{a\to\infty}\frac{w_k^{(q)}(a;y)}{W_{k+1}^{(q)}(a)}=\lim_{a\to\infty}\left(\frac{w_{k-1}^{(q)}(a;y)}{W_k^{(q)}(a)}\frac{W_k^{(q)}(a)}{W_{k+1}^{(q)}(a)}+\delta_k\int_{b_k}^a
\frac{W_k^{(q)}(a-z)}{W_{k+1}^{(q)}(a)}w_{k-1}^{(q)\prime}(z;y)dz\right)=0.
\end{align*}
Therefore,
\begin{align}\label{function_v}\nonumber
v_k^{(q)}(y):&=\lim_{a\to\infty}\frac{w_k^{(q)}(a;y)}{W_{k}^{(q)}(a)}=\lim_{a\to\infty}\left(\frac{w_{k-1}^{(q)}(a;y)}{W_k^{(q)}(a)}+
\delta_k\int_{b_k}^a\frac{W_k^{(q)}(a-z)}{W_{k}^{(q)}(a)}w_{k-1}^{(q)\prime}(z;y)dz\right)\\&=\delta_k\int_{b_k}^{\infty} 
e^{-\varphi_k(q)z}w_{k-1}^{(q)\prime}(z;y)dz,
\end{align} 
where the last equality follows from Lebesgue's dominated convergence theorem by noting that 
$W_k^{(q)}(a-z) / W_{k}^{(q)}(a) \leq e^{-\varphi_k(q) z} $ and we have from Lemma \ref{lemma_convergence_assumptotics_w} 
that $\int^{\infty}_{b_{k}} e^{-\varphi_k(q)y}$ $w_{k-1}^{(q)\prime}(y;d)\md y < \infty$.

	Next, we prove identities \eqref{resolvent3} and \eqref{resolvent4}.
	To prove (\ref{resolvent3}), we take the limit $d \rightarrow -\infty $ in (\ref{resolvent1}) and the result follows since
	\begin{align*}
&\lim_{d\to-\infty}\frac{w_{1}^{(q)}(x;d)}{W^{(q)}(-d)} =\lim_{d\to-\infty}\frac{
W^{(q)}(x-d)+\delta_1\int_{b_1}^x W_1^{(q)}(x-z)W^{(q)\prime}(z-d)\;dz }{W^{(q)}(-d)}\\
& =e^{\Phi(q)x}+ \delta_1 \Phi(q)\int_{b_1}^x e^{\Phi(q)z} W_1^{(q)}(x-z)dz=u^{(q)}_1(x). 
\end{align*}
The above limit holds because  $\lim_{d\to -\infty} \frac{W^{(q)}(y-d)}{W^{(q)}(-d)}=e^{\Phi(q)y}$ and
$\lim_{d\to-\infty}  \frac{W^{(q)\prime}(y-d)}{W^{(q)}(-d)}=\Phi(q)e^{\Phi(q)y} $. Moreover,
\begin{align*}
&\lim_{d\to-\infty}\frac{ w_{1}^{(q)\prime}(x;d)}{W^{(q)}(-d)} \\&=\lim_{d\to-\infty}\frac{
W^{(q)\prime}(x-d)\left(1+\delta_1 W_1^{(q)}(0)\right)+\delta_1\int_{b_1}^x W_1^{(q)\prime}(x-z)W^{(q)\prime}(z-d)dz}{W^{(q)}(-d)}\\
& =\Phi(q)e^{\Phi(q)x}\left(1+\delta_1 W_1^{(q)}(0)\right)+ \delta_1 \Phi(q)\int_{b_1}^x e^{\Phi(q)z} W_1^{(q)\prime}(x-z)dz=u^{(q)\prime}_1(x). 
\end{align*}

Thus, proceeding by induction we assume that 
\begin{equation}\label{limit-d}
\lim_{d\to-\infty} \frac{ w_{k-1}^{(q)}(x;d)}{W^{(q)}(-d)} =u_{k-1}^{(q)}(x)
\qquad \textrm{and} \qquad 
\lim_{d\to-\infty} \frac{ w_{k-1}^{(q)\prime}(x;d)}{W^{(q)}(-d)} =u_{k-1}^{(q)\prime}(x).
\end{equation} 

Therefore
\begin{align*}\nonumber
&\lim_{d\to -\infty}\frac{ w_{k}^{(q)}(x;d)}{W^{(q)}(-d)}=\lim_{d\to -\infty}\frac{
w_{k-1}^{(q)}(x;d) + \delta_k\int_{b_k}^x W_k^{(q)}(x-z) w_{k-1}^{(q)\prime}(z;d)dz}{W^{(q)}(-d)}\\&
= u_{k-1}^{(q)}(x) + \delta_k\int_{b_k}^x W_k^{(q)}(x-z) u_{k-1}^{(q)\prime}(z)dz=u_k^{(q)}(x).
\end{align*}

We obtain the equality in (\ref{resolvent4}) by taking the limit $a \rightarrow \infty $ in (\ref{resolvent3}) and applying 
Lemma \ref{lemma_convergence_assumptotics_w}. Moreover, using the arguments given in Lemma \ref{lemma_convergence_assumptotics_w} 
we obtain that
$ \lim_{a\to\infty}\frac{u_k^{(q)}(a)}{W_{k+1}^{(q)}(a)}=0$,
and finally 
$$
\lim_{a\to\infty} \frac{ u_{k}^{(q)}(a)}{W_{k-1}^{(q)}(a)} = 
\delta_k \int_{b_k}^{\infty} e^{-\varphi_k(q)z} u_{k-1}^{(q)\prime}(z)\;dz. \textrm{\hspace{7cm}} \blacksquare $$

\subsection{Proof of Theorem \ref{th:mr.recursion}(i)}
To obtain the identity (\ref{main_twosided_up}) for a general spectrally negative L\'evy process $X$, it suffices to note that for $q >0$, by applying the 
strong Markov property, we have that
\begin{eqnarray*}
	\lefteqn{\e_x \left[ \mathrm{e}^{-q \kappa_k^{a,+}} \ind_{\{\kappa_k^{a,+}<\kappa_k^{0,-}\}} \right] \cdot q\int_0^{\infty}e^{-qt}
		\mathbb{P}_a(U_k(t)\in \mathbb{R}_+,t<\kappa_k^{0,-})\;dt}\\&=&
	q\int_0^{\infty}e^{-qt}\mathbb{P}_x(U_k(t)\in \mathbb{R}_+,t<\kappa_k^{0,-})\;dt\\
	&-& q\int_0^{\infty}e^{-qt}\mathbb{P}_x(U_k(t)\in [0,a],t<\kappa_k^{0,-}\wedge\kappa_k^{a,+})\;dt.
\end{eqnarray*}
The above probabilities can be obtained directly from the resolvent  measures given in Theorem \ref{Resolvents}.
\begin{flushright}$\blacksquare$\end{flushright}

\subsection{Proof of Theorem \ref{th:mr.recursion}(ii)}
The proof follows by noting that, for $x \leq a$,
\begin{eqnarray*}
\lefteqn{\e_x \left[ \mathrm{e}^{-q \kappa_k^{0,-}} \ind_{\{\kappa_k^{0,-}<\kappa_k^{a,+}\}} \right]}\\&=&
\e_x \left[ \mathrm{e}^{-q \kappa_k^{0,-}} \ind_{\{\kappa_k^{0,-}<\infty\}} \right]-
\e_x \left[ \mathrm{e}^{-q \kappa_k^{a,+}} \ind_{\{\kappa_k^{a,+}<\kappa_k^{0,-}\}} \right]\e_a \left[ \mathrm{e}^{-q \kappa_k^{0,-}} 
\ind_{\{\kappa_k^{0,-}<\infty\}} \right].
\end{eqnarray*}

Now, we compute the first term of the right-hand side of the above equality as
\begin{align*}
\e_x \left[ \mathrm{e}^{-q \kappa_k^{0,-}} \ind_{\{\kappa_k^{0,-}<\infty\}} \right]=
1-q\int_0^{\infty}e^{-qt}\mathbb{P}_x(U_k(t)\in \mathbb{R}_+,t<\kappa_k^{0,-})\;dt.
\end{align*}
Using \eqref{resolvent2} we note that
\begin{align*}
&\int_0^{\infty}e^{-qt}\mathbb{P}_x(U_k(t)\in \mathbb{R}_+,t<\kappa_k^{0,-})\;dt
=\int_0^{\infty}
\frac{\frac{w_k^{(q)}(x)}{v_k^{(q)}(0)}v_k^{(q)}(y)-w_k^{(q)}(x;y)}
{\Xi_{\phi_k}(y)}  \;dy\\
&=\int_0^{\infty}\sum_{i=0}^{k}
\frac{\frac{w_k^{(q)}(x)}{v_k^{(q)}(0)}v_k^{(q)}(y)-w_k^{(q)}(x;y)}
{\Xi_{\phi_i}(y)} \ind_{\{y\in[b_i,b_{i+1}]\}} \;dy.
\end{align*}



Using equalities \eqref{small w}, (\ref{w_k_above_b_k}), and (\ref{useful_inedity2}) we obtain that
\begin{align*}
&\sum_{i=0}^1
\int_{b_i}^{b_{i+1}}\frac{w_1^{(q)}(x;y)}{\Xi_{\phi_i}(y)}\;dy
=\int_0^{b_1}w_1^{(q)}(x;y)\;dy+\int_{b_1}^{x}\frac{w_1^{(q)}(x;y)}{\Xi_{\phi_1}(y)}\;dy\\
&=\int_0^{x}W^{(q)}(x-y)\;dy+ \delta_1 \int_0^{b_1} \int_{b_1}^{x} W_1^{(q)}(x-z)W^{(q)\prime}(z-y)dz \;dy \\ &
+\int_{b_1}^{x}W_1^{(q)}(x-y)dy - \int_{b_1}^{x}W^{(q)}(x-y)\;dy\\
&=\int_0^{x}W^{(q)}(y)\;dy +\delta_1\int_{b_1}^{x}W_1^{(q)}(x-y)\left(W^{(q)}(y)-W^{(q)}(y-b_1)\right)\;dy\\&+
\delta_1\int_{0}^{x-b_1}W_1^{(q)}\left(x-(b_1+y)\right)W^{(q)}(y)\;dy\\
&=\int_0^x W^{(q)}(y)\;dy+\delta_1\int_{b_1}^{x}W_1^{(q)}(x-y)W^{(q)}(y)\;dy.
\end{align*}
Hence,
\begin{align*}
z_1^{(q)}(x)&:=1+q\sum_{i=0}^1 \int_{b_i}^{b_{i+1}}\frac{w_1^{(q)}(x;y)}{\Xi_{\phi_i}(y)}\;dy
=Z^{(q)}(x)+q\delta_1\int_{b_1}^{x}W_1^{(q)}(x-y)W^{(q)}(y)\;dy\\
&=Z^{(q)}(x)+\delta_1\int_{b_1}^{x}W_1^{(q)}(x-y)Z^{(q)\prime}(y)\;dy.
\end{align*}
Now, assume by induction that
$$ z^{(q)}_{k-1}(x):=1+q\sum_{i=0}^{k-1}\int_{b_i}^{b_{i+1}}\frac{w_k^{(q)}(x;y)}{\Xi_{\phi_i}(y)}\;dy, \quad \textrm{where}
 \quad b_0=-\infty, b_{k}=x.  $$

Then, by using identities \eqref{eq:mr.recursion}, (\ref{w_k_above_b_k}), and (\ref{useful_inedity2}) we have that
\begin{align*}
z^{(q)}_k(x):&= 1+q\sum_{i=0}^k \int_{b_i}^{b_{i+1}}\frac{w_k^{(q)}(x;y)}{\Xi_{\phi_i}(y)}\;dy
\\&=1+q\sum_{i=0}^{k-1} \int_{b_i}^{b_{i+1}}\frac{w_k^{(q)}(x;y)}{\Xi_{\phi_i}(y)}\;dy+q\int_{b_k}^{x}W_k^{(q)}(x-y)\;dy\\
&=1+\sum_{i=0}^{k-1} q\int_{b_i}^{b_{i+1}}\frac{w_{k-1}^{(q)}(x;y)}{\Xi_{\phi_i}(y)}\;dy + q 
\int_{b_k}^{x}\frac{w_{k-1}^{(q)}(x;y)}{\Xi_{\phi_{k-1}}(y)}\;dy +q\int_{b_k}^{x}W_k^{(q)}(x-y)\;dy\\
&- q\int_{b_k}^{x}W_{k-1}^{(q)}(x-y) \;dy 
+ q \delta_k\sum_{i=0}^{k-1} \int_{b_i}^{b_{i+1}} \frac{1}{\Xi_{\phi_i}(y)}
\int_{b_k}^{x} W_k^{(q)}(x-z)w_{k-1}^{(q)\prime}(z;y)\;dz\;dy\\
&=1+q\sum_{i=0}^{k-1} \int_{b_i}^{b_{i+1}}\frac{1}{\Xi_{\phi_i}(y)}\left(w^{(q)}_{k-1}(x;y)+
\delta_k\int_{b_k}^xW_k^{(q)}(x-z)w_{k-1}^{(q)\prime}(z;y)\;dz\right)\;dy\\
&=z_{k-1}^{(q)}(x)+q\delta_k \int_{b_k}^x \sum_{i=0}^{k-1}\int_{b_i}^{b_{i+1}} \frac{1}{\Xi_{\phi_i}(y)} W_k^{(q)}(x-z)w_{k-1}^{(q)\prime}(z;y)\;dy\; dz\\
&+q\delta_k\int_{b_k}^{x}W_k^{(q)}(x-z)W_{k-1}^{(q)}(z-b_k)\;dz\\
&=z_{k-1}^{(q)}(x)+q\delta_k \int_{b_k}^x  W_k^{(q)}(x-z) \sum_{i=0}^{k-1} \int_{b_i}^{b_{i+1}}\frac{1}{\Xi_{\phi_i}(y)} w_{k-1}^{(q)\prime}(z;y)\;dy\; dz\\
&+q\delta_k\int_{b_k}^{x}W_k^{(q)}(x-z)\frac{d}{dz}\left(\int_{b_k}^{z} W_{k-1}^{(q)}(z-y)dy\right)\;dz\\
&=z_{k-1}^{(q)}(x)+\delta_k\int_{b_k}^xW_k^{(q)}(x-z)z_{k-1}^{(q)\prime}(y)\;dy. 
\end{align*}

Hence, we can write
\begin{align*}
\e_x \left[ \mathrm{e}^{-q \kappa_k^{0,-}} \ind_{\{\kappa_k^{0,-}<\infty\}} \right]=
1-q\sum_{i=0}^k  \int_{b_i}^{b_{i+1}} \frac{1}{\Xi_{\phi_i}(y)} \left\{\frac{w_k^{(q)}(x)}{v_k^{(q)}(0)}v_k^{(q)}(y)-w_k^{(q)}(x;y)\right\}dy.
\end{align*}

\par Finally, by putting the pieces together we obtain that
\begin{align*}
\e_x &\left[ \mathrm{e}^{-q \kappa_k^{0,-}} \ind_{\{\kappa_k^{0,-}<\kappa_k^{a,+}\}} \right]=1-q\sum_{i=0}^k
 \int_{b_i}^{b_{i+1}} \frac{1}{\Xi_{\phi_i}(y)}\left\{\frac{w_k^{(q)}(x)}{v_k^{(q)}(0)}v_k^{(q)}(y)-w_k^{(q)}(x;y)\right\}dy\\
&-\frac{w_k^{(q)}(x)}{w_k^{(q)}(a)}\left(1-q\sum_{i=0}^k\int_{b_i}^{b_{i+1}}\frac{1}{\Xi_{\phi_i}(y)}\left\{
\frac{w_k^{(q)}(a)}{v_k^{(q)}(0)}v_k^{(q)}(y)-w_k^{(q)}(a;y)\right\}dy\right)\\
&=z_k^{(q)}(x)-\frac{w_k^{(q)}(x)}{w_k^{(q)}(a)}z_k^{(q)}(a). \textrm{\hspace{10cm}} \blacksquare
\end{align*}

\subsection{Proof of  Corollary \ref{one_sided}(i)-(ii)}
	To obtain (\ref{main_onesided_down}), we take the limit $a\rightarrow \infty$ in (\ref{main_twosided_down}). The result follows because from (\ref{function_v}) we have
	\begin{align*}
v_k^{(q)}(0)&=\lim_{a\to\infty}\frac{w_k^{(q)}(a)}{W_{k}^{(q)}(a)}=\delta_k\int_{b_k}^{\infty} e^{-\varphi_k(q)z}w_{k-1}^{(q)\prime}(z)dz.
\end{align*}
Then, using Lemma \ref{lemma_convergence_assumptotics_w} and Remark \ref{remark_convergence_assumptotics_w} we get 
\begin{align*}
\lim_{a\to\infty}\frac{z_k^{(q)}(a)}{W_{k+1}^{(q)}(a)}=\lim_{a\to\infty}\left(\frac{z_{k-1}^{(q)}(a)}{W_k^{(q)}(a)}\frac{W_k^{(q)}(a)}{W_{k+1}^{(q)}(a)}
+\delta_k\int_{b_k}^a\frac{W_k^{(q)}(a-z)}{W_{k+1}^{(q)}(a)}z_{k-1}^{(q)\prime}(z)\;dz\right)=0.
\end{align*}
Therefore,
\begin{align}\nonumber
\lim_{a\to\infty}\frac{z_k^{(q)}(a)}{W_{k}^{(q)}(a)}=\lim_{a\to\infty}\left(\frac{z_{k-1}^{(q)}(a)}{W_k^{(q)}(a)}+
\delta_k\int_{b_k}^a\frac{W_k^{(q)}(a-z)}{W_{k}^{(q)}(a)}z_{k-1}^{(q)\prime}(z)\;dz\right)=
\delta_k\int_{b_k}^{\infty} e^{-\varphi_k(q)z}z_{k-1}^{(q)\prime}(z)\;dz.
\end{align}
The identity (\ref{main_onesided_up}) is immediate by taking the limit $d\rightarrow -\infty$ in (\ref{main_twosided_up}) and using
equation \eqref{limit-d}.
\begin{flushright}$\blacksquare$\end{flushright}
\subsection{Proof of  Corollary \ref{ruin_probab}}

To obtain the ruin probability, we first take $q=0$ in (\ref{main_twosided_down}) giving
that 
\begin{equation}\label{eq:ruin.k}
\mathbb{P}_x(\kappa_k^{0,-}<\kappa_k^{a,+})=z_k(x)-\frac{w_k(x)}{w_k(a)}.
\end{equation} 
From the definition of $z_k$ given in Theorem \ref{th:mr.recursion}(ii), and because $Z=1$, it follows immediately by induction that $z_k=1$. 
Next, we will prove by induction that
	\begin{equation}\label{lim_ruin_prob}\lim_{a\to \infty} w_k(a)= \frac{1-\sum_{j=1}^{k} \delta_j w_{j-1}(b_j)}{\mathbb{E}[X(1)]-\sum_{j=1}^k\delta_j} ,\end{equation}
	where $w_0=W$.
	Now, for $k=1$, using the fact that $W(\infty)=1/\mathbb{E}[X(1)]$ (which is well defined because $\mathbb{E}[X(1)] > 0$ by assumption), 
	we obtain the following:
	\begin{align*}
	w_1(\infty)&=\lim_{a\to \infty}\left(W(a)+ \delta_1 \int_{b_1}^{a} W_1(a-y) W'(y) dy \right)\\ &= \frac{1}{\mathbb{E}[X(1)]}+\frac{\delta_1}{\mathbb{E}[X(1)]-\delta_1}  \int_{b_1}^{\infty}W'(y) dy \\ &= \frac{1}{\mathbb{E}[X(1)]}+\frac{\delta_1\left(\frac{1}{\mathbb{E}[X(1)]}-W(b_1)\right)}{\mathbb{E}[X(1)]-\delta_1} =\frac{1-\delta_1 W(b_1)}{\mathbb{E}[X(1)]-\delta_1}.
	\end{align*}
	Hence,
	\begin{align*}
	\mathbb{P}_x(\kappa_1^{0,-}<\kappa_1^{a,+})=1-\left[\frac{\mathbb{E}[X(1)]-\delta_1}{1-\delta_{1}W(b_{1})}\right]w_1(x),
	\end{align*}
	which agrees with the result in Theorem 5 (ii) in \cite{kyprianouloeffen2010}.

Now, for the induction step assume that 
\begin{equation}\label{w_k-1 infty}
\lim_{a\to \infty} w_{k-1}(a)= \frac{1-\sum_{j=1}^{k-1} \delta_j w_{j-1}(b_j)}{\mathbb{E}[X(1)]-\sum_{j=1}^{k-1}\delta_j} .\end{equation}
Then, we compute using (\ref{w_k-1 infty})
\begin{align*}
w_k(\infty)&=\lim_{a\to \infty}\left(w_{k-1}(a)+ \delta_k \int_{b_k}^{a} W_k(a-y) w_{k-1}'(y) dy \right)\\ &= w_{k-1}(\infty)+\frac{\delta_k \left( w_{k-1}(\infty)-  w_{k-1}(b_k) \right) }{\mathbb{E}[X(1)]-\sum_{j=1}^{k}\delta_j}
= \frac{1-\sum_{j=1}^{k} \delta_j w_{j-1}(b_j)}{\mathbb{E}[X(1)]-\sum_{j=1}^k\delta_j}.
\end{align*}
Finally, we obtain \eqref{rp2}. Then, applying \eqref{lim_ruin_prob} in \eqref{eq:ruin.k} gives the result.
\begin{flushright}$\blacksquare$\end{flushright}

\section{Proofs for  Section \ref{ss:integral}}\label{AppD}

\subsection{Proof of Lemma \ref{lem:zeta.finite}}
     To show that $\zeta(x)$ is well defined, we consider its Laplace transform.
     Thus, we have that
     $$\int_0^\infty e^{-sx}\sum_{l=1}^\infty a^l 
{(W^{(q)\prime})}^{*l}(x)\,dx=\sum_{l=1}^\infty \left(a\int_0^\infty e^{-sx}{W^{(q)\prime}}(x)\,dx\right)^l.$$
In order to proof the convergence, we have to verify that for sufficiently large $s>0$, it holds that $a\int_0^\infty e^{-sx}{W^{(q)\prime}}(x)\,dx<1$. 
Indeed,
 for $s>\Phi(q)$
\begin{align}\label{lap_der}
\int_0^\infty e^{-sx}{W^{(q)\prime}}(x)\,dx&=-W^{(q)}(0)+s\int_0^\infty e^{-sx}{W^{(q)}}(x)\,dx\notag\\
&=-W^{(q)}(0)+\frac{s}{\psi(s)-q}.
\end{align}
Now using Proposition I.2 (ii) and Corollary VII.5 (iii) in \cite{bertoin1996}, we have that 
\begin{equation}\label{limit_LT}
\lim_{s\to\infty}\left(\frac{s}{\psi(s)-q}-W^{(q)}(0)\right)=0.
\end{equation}
Hence, the Laplace transform given in \eqref{lap_der} is less than $1/a$ for sufficiently large $s$.
The proof is completed by applying Lemma \ref{lem:prep1}.
\begin{flushright}$\blacksquare$\end{flushright}

\subsection{Proof of Lemma \ref{lem:i-iii}}
  
 We start by noting that
$K(x,y)\le a_T {W^{(q)\prime}}((x-y)+)$.
    Now,  assume that 
     \begin{equation}\label{eq:iii}  K^{(l)}(x,y)\le a_T^l{(W^{(q)\prime})^{*l}((x-y)+)}
     \end{equation}
       for $l\in\mathbb{N}$.
     Then, for $x> y\geq0$, we have that
     \begin{align*}
         K^{(l+1)}(x,y)&=\int_{y}^xK^{(l)}(x,z)K(z,y)\,dz\\
       &\le {a_T^{l+1}}\int_y^x(W^{(q)\prime})^{*l}(x-z){W^{(q)\prime}}(z-y)\,dz\\
       &= {
  a_T^{l+1}\int_0^{x-y}(W^{(q)\prime})^{*l}(x-y-z){W^{(q)\prime}}(z)\,dz}\\
&={a_T^{l+1}(W^{(q)\prime})^{*(l+1)}((x-y)+)}.
     \end{align*}
Hence, by induction we can conclude that \eqref{eq:iii} holds for every $l\in\mathbb{N}$. Therefore,
     $$K^{\dagger}(x,y)\le \sum_{l=1}^\infty a_T^lW^{(q)'*l}((x-y)+)=\zeta(x-y).$$

\begin{flushright}$\blacksquare$\end{flushright}
\subsection{Proof of Proposition \ref{prop:solutions}}
(i)
Considering the discussion in Section 3.2, we only need to show that$$\int_d^xK^{\dagger}(x,y)\Xi_{\phi}(x)^{-1}W^{(q)\prime}(y-d)\,dy<\infty.$$ 
Because $\phi(x)<1/W^{(q)}(0)$ for all $x\geq0$ in the bounded variation case and $W^{(q)}(0)=0$ in the unbounded variation case, 
we have that $b:=\Xi_{\phi}(x)^{-1}<\infty$. Therefore,
\begin{align*}
\int_d^x \zeta(x-y)g(y;d)\,dy\le b \int_d^x\zeta(x-y){W^{(q)\prime}}(y-d)\,dy&=b\int_0^{x-d}\zeta(x-y-d){W^{(q)\prime}}(y)\,dy\\
&=\frac{b}{a_T}\sum_{l=2}^\infty 
a_T^l(W^{(q)\prime})^{*l}((x-d)+),
\end{align*}
which is convergent by Lemma \ref{lem:i-iii} for any $x\in(d,T]$.\\
(ii) In this case, we need only to check that $$\int_0^xK^{\dagger}(x,y)\Xi_{\phi}(x)^{-1}qW^{(q)}(y)\,dy<\infty.$$ 
Then,
\begin{align*}
\int_0^x \zeta(x-y)g(y)\,dy&=\int_0^x g(x-y)\zeta(y)\,dy\\
&=\sum_{l=1}^\infty a_T^l\int_0^x q W^{(q)}(x-y) {(W^{(q)\prime})^{*l}}(y)\,dy 
\\
&\le qW^{(q)}(x)\sum_{l=1}^\infty a_T^l\int_0^x  {(W^{(q)\prime})^{*l}}(y)\,dy=  qW^{(q)}(x)\int_0^x \zeta(y)\,dy.
\end{align*}
Furthermore, we have that
\begin{align*}
\int_0^{\infty}e^{-s x}\int_0^x{(W^{(q)\prime})^{*l}}(y)\,dydx&=\frac{1}{s}\int_0^{\infty}e^{-sy}{(W^{(q)\prime})^{*l}}(y)dy\\
&=\frac{1}{s}\left(\int_0^\infty e^{-sx}{W^{(q)\prime}}(x)\,dx\right)^{l}\\
&=\frac{1}{s}\left(\frac{s}{\psi(s)-q}-W^{(q)}(0)\right)^{l}.
\end{align*}
Now from \eqref{limit_LT} we get that 
$$\int_0^\infty e^{-sx}\int_0^x\zeta(y)\,dy\,dx<\infty$$
for sufficiently large $s>0$, which yields the finiteness of $\int_0^x\zeta(y)\,dy$ for any $x\in[0,T]$.
\begin{flushright}$\blacksquare$\end{flushright}

\section*{Acknowledgement}
We want to thank the anonymous referees for the careful reading, constructive comments and suggestions, which
significantly improved the presentation and the readability of the paper.

\bibliographystyle{alpha}
\bibliography{REFERENCES}

\end{document}